\let\pa\partial  
\let\na\nabla  
\newcommand{\N}{{\mathbb N}}  
\newcommand{\R}{{\mathbb R}} 
\newcommand{\diver}{\operatorname{div}}  
\newcommand{\T}{{\mathbb T}}
\newcommand{\TT}{{\mathcal T}}
\newcommand{\E}{{\mathcal E}}
\newcommand{\dd}{\text{\rm d}}
\newcommand{\m}{\text{\rm m}}
\newcommand{\D}{{\mathcal D}}
\newtheorem{theorem}{Theorem}   
\newtheorem{lemma}[theorem]{Lemma}   
\newtheorem{proposition}[theorem]{Proposition}   
\newtheorem{remark}[theorem]{Remark}   
\newtheorem{corollary}[theorem]{Corollary}
\begin{document}  

\title[Entropy dissipative finite-volume scheme]{
Entropy-dissipative discretization \\
of nonlinear diffusion equations \\
and discrete Beckner inequalities}

\author[C. Chainais-Hillairet]{Claire Chainais-Hillairet}
\address{Laboratoire Paul Painlev\'e, U.M.R. CNRS 8524,
Universit\'e Lille 1 Sciences et Technologies, Cit\'e Scientifique,
59655 Villeneuve d'Ascq Cedex and 
Project-Team SIMPAF, INRIA Lille-Nord-Europe, Villeneuve d'Ascq, France}
\email{Claire.Chainais@math.univ-lille1.fr}

\author[A. J\"ungel]{Ansgar J\"ungel}
\address{Institute for Analysis and Scientific Computing, Vienna University of  
	Technology, Wiedner Hauptstra\ss e 8--10, 1040 Wien, Austria}
\email{juengel@tuwien.ac.at} 

\author[S. Schuchnigg]{Stefan Schuchnigg}
\address{Institute for Analysis and Scientific Computing, Vienna University of  
	Technology, Wiedner Hauptstra\ss e 8--10, 1040 Wien, Austria}
\email{stefan.schuchnigg@tuwien.ac.at} 

\date{\today}

\thanks{The authors have been partially supported by the Austrian-French Project 
Amade\'e of the Austrian Exchange Service (\"OAD).
The second and last author acknowledge partial support from   
the Austrian Science Fund (FWF), grants P22108, P24304, I395, and W1245.
Part of this work was written during the stay of the second author at the
University of Technology, Munich (Germany), as a John von Neumann
Professor. The second author thanks the Department of Mathematics for the hospitality
and Jean Dolbeault (Paris) for very helpful discussions on Beckner inequalities} 

\begin{abstract}
The time decay of fully discrete finite-volume approximations of porous-medium
and fast-diffusion equations with Neumann or periodic boundary conditions is proved
in the entropy sense.
The algebraic or exponential decay rates are computed explicitly.
In particular, the numerical scheme dissipates all zeroth-order entropies
which are dissipated by the continuous equation. The proofs are based on
novel continuous and discrete generalized Beckner inequalities. 
Furthermore, the exponential decay of some first-order entropies is proved 
in the continuous and discrete case using systematic integration by parts. 
Numerical experiments in one and two
space dimensions illustrate the theoretical results and indicate that some
restrictions on the parameters seem to be only technical.
\end{abstract}

\keywords{Porous-medium equation, fast-diffusion equation, finite-volume method, 
entropy dissipation, Beckner inequality, entropy construction method.}  
 
\subjclass[2000]{65M08, 65M12, 76S05.}

\maketitle


\section{Introduction}

This paper is concerned with the time decay of fully discrete finite-volume
solutions to the nonlinear diffusion equation
\begin{equation}\label{1.pme}
  u_t = \Delta(u^\beta) \quad\mbox{in }\Omega,\ t>0, \quad
  u(\cdot,0)=u_0\quad\mbox{in }\Omega,
\end{equation}
and with the relation to discrete generalized Beckner inequalities.
Here, $\beta>0$ and $\Omega\subset\R^d$ ($d\ge 1$) is a bounded domain.
When $\beta>1$, \eqref{1.pme} is called the porous-medium equation, 
describing the flow of an isentropic gas through a porous medium \cite{Vaz07}. 
Equation \eqref{1.pme} with $\beta<1$ is referred to as the fast-diffusion equation,
which appears, for instance, in plasma physics with $\beta=\frac12$ \cite{Ber79} or
in semiconductor theory 
with $0<\beta<1$ \cite{Kin88}. 
We impose homogeneous Neumann boundary conditions
\begin{equation}\label{1.bc}
  \na(u^\beta)\cdot\nu = 0\quad\mbox{on }\pa\Omega,\ t>0,
\end{equation}
where $\nu$ denotes the unit normal exterior vector to $\pa\Omega$,
or multiperiodic boundary conditions (i.e.\ $\Omega$ equals the torus $\T^d$). 
Let us denote by {\rm m} the Lebesgue measure in $\R^d$ or $\R^{d-1}$; we assume for simplicity that $\mbox{m}(\Omega)=1$.
For existence and uniqueness results for the porous-medium equation 
in the whole space or under suitable boundary conditions, 
we refer to the monograph \cite{Vaz07}.

In the literature, there exist many numerical schemes for nonlinear diffusion
equations related to \eqref{1.pme}. Numerical techniques include (mixed) 
finite-element methods \cite{AWZ96,EbLi08,Ros83}, 
finite-volume approximations \cite{EGH98,Ohl01},
high-order relaxation ENO-WENO schemes \cite{CNPS07}, 
or particle methods \cite{LiMa01}. In these references, also stability and
numerical convergence properties are proved. 

The preservation of the structure of diffusion equations is a 
very important property of a numerical scheme. For instance, ideas employed
for hyperbolic conservation laws were extended to degenerate diffusion equations,
like the porous-medium equation,
which may behave like hyperbolic ones in the regions of degeneracy \cite{MiVo03}.
Positivity-preserving schemes for nonlinear fourth-order equations were
thoroughly investigated in the context of lubrication-type equations
\cite{BBG98,ZhBe00} and quantum diffusion equations \cite{JuPi01}. 
Entropy-consistent finite-volume finite-element schemes for the fourth-order
thin-film equation were suggested by Gr\"un and Rumpf \cite{GrRu00}. 
For quantum diffusion models, an entropy-dissipative relaxation-type 
finite-difference discretization was investigated by Carrillo et al.\ \cite{CJT03}. Furthermore, entropy-dissipative schemes for electro-reaction-diffusion systems 
were derived by Glitzky and G\"artner \cite{GlGa09}. 
However, it seems that there does not exist any systematic study on entropy-dissipative
discretizations for \eqref{1.pme} and the time decay of their discrete solutions.

Our first aim is to prove that the finite-volume scheme for \eqref{1.pme}-\eqref{1.bc},
defined in \eqref{2.fv}, dissipates the discrete versions of the functionals
\begin{align}
  E_\alpha[u] &= \frac{1}{\alpha+1}\left(\int_\Omega u^{\alpha+1}dx
  - \Big(\int_\Omega udx\Big)^{\alpha+1}\right), \label{1.E} \\
  F_\alpha[u] &= \frac{1}{2}\int_\Omega|\na u^{\alpha/2}|^2 dx, \quad \alpha>0.
  \label{1.F}
\end{align}
In fact, we will prove (algebraic or exponential) convergence
rates at which the discrete functionals converge to zero as $t\to\infty$. 
We call $E_\alpha$ a zeroth-order entropy and $F_\alpha$
a first-order entropy. The functional 
$F_{1}$ is known as the Fisher information, used in mathematical statistics and
information theory \cite{Edg08}. Our analysis of the decay rates of the entropies
will be guided by the entropy-dissipation method. An essential ingredient of
this technique is a functional inequality relating the entropy to the entropy 
dissipation \cite{AMTU01,CJMTU01}. For the diffusion equation \eqref{1.pme},
this relation is realized by the Beckner inequality \cite{Bec89}.

The entropy-dissipation method was applied to \eqref{1.pme} in the whole space
to prove the decay of the solutions to the asymptotic self-similar profile
in, e.g., \cite{CaTo00,DeDo02}.  
The convergence towards the constant steady state on the
one-dimensional torus was proved in \cite{CDGJ06}. 
However, we are not aware of general entropy decay estimates for solutions 
to \eqref{1.pme} to the constant steady state, even in the continuous case. 
The reason might be that generalizations to the Beckner inequality, 
needed to relate the entropy dissipation to
the entropy, are missing. As our second aim, we prove these generalized
Beckner inequalities and provide some decay estimates for $E_\alpha$ and $F_\alpha$
along trajectories of \eqref{1.pme}.

This paper splits into two parts. The first part is concerned
with the proof of generalized Beckner inequalities and the decay rates
for the continuous case. The second---and main---part is the ``translation''
of these results to an implicit Euler finite-volume discretization of \eqref{1.pme}. 
In the following, we summarize our main results.

The {\em first result} is the proof of the generalized Beckner inequality
\begin{equation}\label{1.bec}
  \int_\Omega|f|^q dx - \left(\int_\Omega|f|^{1/p}dx\right)^{pq} 
  \le C_B(p,q)\|\na f\|_{L^2(\Omega)}^q,
\end{equation}
where $f\in H^1(\Omega)$ and $0<q\le 2$, $pq\ge 1$. In the case $q=2$,
we require that $\frac12-\frac{1}{d}\le p\le 1$. The constant $C_B(p,q)>0$
only depends on $p$, $q$, and the constant of the Poincar\'e-Wirtinger
inequality (see Lemma \ref{lem.beckner} for details). The usual Beckner
inequality \cite{Bec89} is recovered for $q=2$; see Remark \ref{rem.bec} 
for a comparison of related Beckner inequalities in the literature.
The proof is elementary and only employs the Poincar\'e-Wirtinger inequality. 
By using a discrete version of this inequality (see \cite{BCF12}),
the proof can be easily ``translated'' to derive the discrete 
generalized Beckner inequality
$$
  \int_\Omega|f|^q dx - \left(\int_\Omega|f|^{1/p}dx\right)^{pq} 
  \le C_b(p,q)|f|_{1,2,\TT}^q,
$$
where $f$ is a function which is constant on each cell of the finite-volume
triangulation $\TT$ of $\Omega$ and $|\cdot|_{1,2,\TT}$ is the discrete
$H^1$-seminorm; see Section \ref{sec.not} and Lemma \ref{lem.d.bec} for details. 

The {\em second result} is the time decay of the entropies $E_\alpha$
and $F_\alpha$ along trajectories of \eqref{1.pme}. Differentiating $E_\alpha[u(t)]$
with respect to time and employing the Beckner inequality \eqref{1.bec}, we show
for $\beta>1$ that
$$
  \frac{dE_\alpha}{dt}[u(t)] \le CE_\alpha[u(t)]^{(\alpha+\beta)/(\alpha+1)},
  \quad t>0,
$$
where $C>0$ depends on $\alpha$, $\beta$, and $C_B(p,q)$. By a nonlinear
Gronwall inequality, this implies the algebraic decay of $u(t)$ to equilibrium
in the entropy sense; see Theorem \ref{thm.E.poly}. 
If the solution is positive and $0<\alpha\le 1$, the above inequality becomes
$$
  \frac{dE_\alpha}{dt}[u(t)] \le C(u_0)E_\alpha[u(t)],
  \quad t>0,
$$
which results in an exponential decay rate; see Theorem \ref{thm.E.exp}.

The first-order entropies $F_\alpha[u(t)]$ decay exponentially fast (for
positive solutions) for all $(\alpha,\beta)$ lying in the strip
$-2\le\alpha-2\beta\le 1$ (one-dimensional case) or in the region
$M_d$, which is illustrated in Figure \ref{fig.ab} below (multi-dimensional case);
see Theorems \ref{thm.F.exp} and \ref{thm.F.exp3}. The proof is based on
systematic integration by parts \cite{JuMa06}. In order to avoid
boundary integrals arising from the iterated integrations by parts, these
results are valid only if $\Omega=\T^d$. Notice that all these results are new.

The {\em third---and main---result} is the ``translation'' of the continuous decay rates
to the finite-volume approximation. We obtain the same results for a discrete
version of $E_\alpha$ in Theorems \ref{thm.E.poly2} (algebraic decay)
and \ref{thm.E.exp2} (exponential decay). The situation is different for
the first-order entropies $F_\alpha$. The reason is that it is very difficult
to ``translate'' the iterated integrations by parts to iterated summations
by parts since there is no discrete nonlinear chain rule. 
For the zeroth-order entropies, this is done by exploiting the
convexity of the mapping $x\mapsto x^{\alpha+1}$. 
For the first-order entropies, we employ the concavity of the discrete version 
of $dF_\alpha/dt$ with respect to the time approximation parameter. 
We prove in Theorem \ref{thm.F.exp2} that for $1\le\alpha\le 2$ and $\beta=\alpha/2$,
the discrete first-order entropy is monotone (multi-dimensional case) and decays
exponentially fast (one-dimensional case). We stress the fact that this
is the first result in the literature on the decay of discrete first-order entropies.

Throughout this paper, we assume that the solutions to \eqref{1.pme} are
smooth and positive such that we can perform all the computations and
integrations by parts. In particular, we avoid any technicalities due to
the degeneracy ($\beta>1$) or singularity ($\beta<1$) in \eqref{1.pme}.
Most of our results can be generalized to nonnegative weak solutions by
using a suitable approximation scheme but details are left to the reader.

The paper is organized as follows. In Section \ref{sec.cont}, we investigate
the continuous case. We prove two novel generalized Beckner inequalities
in Section \ref{sec.cont.bec}, the algebraic and exponential decay of
$E_\alpha[u]$ in Section \ref{sec.cont.E}, and the exponential decay of
$F_\alpha[u]$ in Section \ref{sec.cont.F}. The discrete situation is analyzed
in Section \ref{sec.disc}. After introducing the finite-volume 
scheme in Section \ref{sec.not},
the algebraic and exponential decay rates for the discrete version of $E_\alpha[u]$
is shown in Section \ref{sec.disc.E}, and the exponential decay of the discrete
version of $F_\alpha[u]$ is proved in Section \ref{sec.disc.F}.
In Section \ref{sec.num}, we illustrate the theoretical results by numerical
experiments in one and two space dimensions. They indicate that some of the
restrictions on the parameters $(\alpha,\beta)$ seem to be only technical.
In the appendix, a discrete nonlinear Gronwall lemma and some auxiliary inequalities
are proved.


\section{The continuous case}\label{sec.cont}

It is convenient to analyze first the continuous case before extending the
ideas to the discrete situation. 
We prove new convex Sobolev inequalities and algebraic and exponential decay rates 
of the solutions to \eqref{1.pme}.

\subsection{Generalized Beckner inequalities}\label{sec.cont.bec}

We assume in this subsection that $\Omega\subset\R^d$ ($d\ge 1$) is a bounded domain
such that the Poincar\'e-Wirtinger inequality
\begin{equation}\label{b.pwi}
  \|f-\bar f\|_{L^2(\Omega)}\le C_P\|\na f\|_{L^2(\Omega)}
\end{equation}
for all $f\in H^1(\Omega)$
holds, where $\bar f= \mbox{m}(\Omega)^{-1}\int_\Omega fdx$ and $C_P>0$
only depends on $d$ and $\Omega$.
This is the case if, for instance, $\Omega$ has the cone property 
\cite[Theorem 8.11]{LiLo10} or if $\pa\Omega$ is locally Lipschitz continuous
\cite[Theorem 1.3.4]{WYW06}. Suppose that $\mbox{\rm m}(\Omega)=1$ (to shorten the proof).
Before stating our main result, we prove the following lemma.

\begin{lemma}[Generalized Poincar\'e-Wirtinger inequality]\label{lem.gpw}
Let $0<q\le 2$ and $f\in H^1(\Omega)$. Then 
\begin{equation}\label{b.gpw}
  \|f\|_{L^2(\Omega)}^q \le C_P^q\|\na f\|_{L^2(\Omega)}^q + \|f\|_{L^q(\Omega)}^q
\end{equation}
holds, where $C_P>0$ is the constant of the Poincar\'e-Wirtinger inequality 
\eqref{b.pwi}.
\end{lemma}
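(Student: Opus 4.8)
The plan is to establish the claimed inequality
\[
  \|f\|_{L^2(\Omega)}^q \le C_P^q\|\na f\|_{L^2(\Omega)}^q + \|f\|_{L^q(\Omega)}^q
\]
by combining the Poincar\'e-Wirtinger inequality \eqref{b.pwi} with an elementary scalar inequality that linearizes the $q$-th power of a sum. The starting point is the decomposition $f = (f-\bar f) + \bar f$, which by the triangle inequality in $L^2$ gives $\|f\|_{L^2}\le\|f-\bar f\|_{L^2}+\|\bar f\|_{L^2}$. Since $\m(\Omega)=1$ and $\bar f$ is the constant equal to the mean, we have $\|\bar f\|_{L^2(\Omega)}=|\bar f|=\big|\int_\Omega f\,dx\big|$. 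Feeding \eqref{b.pwi} into the first term, this becomes
\[
  \|f\|_{L^2(\Omega)} \le C_P\|\na f\|_{L^2(\Omega)} + \Big|\int_\Omega f\,dx\Big|.
\]

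First I would raise this bound to the power $q$. The key tool here is the subadditivity of the map $t\mapsto t^q$ for $0<q\le 1$, namely $(a+b)^q\le a^q+b^q$ for $a,b\ge 0$; for $1<q\le 2$ one instead uses convexity together with a factor $2^{q-1}$, so some care about the range of $q$ is needed. To keep constants clean the authors likely intend the subadditive regime, so I would either restrict attention to $0<q\le 1$ first and then handle $1<q\le 2$ separately, or look for a sharper one-step estimate. Applying subadditivity to the displayed bound yields
\[
  \|f\|_{L^2(\Omega)}^q \le C_P^q\|\na f\|_{L^2(\Omega)}^q + \Big|\int_\Omega f\,dx\Big|^q.
\]

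The remaining step is to control the mean term by $\|f\|_{L^q(\Omega)}^q$. Since $\m(\Omega)=1$, Jensen's inequality applied to the concave function $t\mapsto t^q$ (for $0<q\le1$, with respect to the probability measure $dx$ on $\Omega$) gives $\big(\int_\Omega|f|\,dx\big)^q\ge\int_\Omega|f|^q\,dx$, which points the \emph{wrong} way; what is actually needed is $\big|\int_\Omega f\,dx\big|^q\le\big(\int_\Omega|f|\,dx\big)^q\le\int_\Omega|f|^q\,dx=\|f\|_{L^q(\Omega)}^q$, and the second inequality here again requires $q\le1$ via Jensen for the concave power on a probability space. This is the delicate point: Jensen on a probability measure gives $\int|f|^q\le(\int|f|)^q$ for $q\le1$, so the chain I wrote needs rechecking. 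I therefore expect the main obstacle to be handling the exponent bookkeeping correctly across the whole range $0<q\le2$ — in particular reconciling the direction of Jensen's inequality with the direction of the claim, which suggests the bound $\big|\int_\Omega f\,dx\big|^q\le\|f\|_{L^q(\Omega)}^q$ should be obtained not from Jensen but from the monotonicity $\big|\int_\Omega f\,dx\big|\le\int_\Omega|f|\,dx=\|f\|_{L^1(\Omega)}$ combined with a norm-inclusion argument on the probability space $\Omega$, namely $\|f\|_{L^1(\Omega)}\le\|f\|_{L^q(\Omega)}$ when $q\ge1$ and the reverse when $q<1$. Resolving this correctly, and choosing the right elementary inequality for raising the sum to the power $q$ so that the constant stays exactly $C_P^q$ with no extra factor, is where the real work lies; the rest is routine.
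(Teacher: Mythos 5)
Your outline correctly identifies the two obstacles but does not overcome either of them, and as written the argument fails in both regimes. For $1<q\le 2$, your step of raising $\|f\|_{L^2}\le C_P\|\na f\|_{L^2}+|\bar f|$ to the power $q$ costs a factor $2^{q-1}$, which destroys the claimed constant $C_P^q$. The paper avoids this by working at the level of squares: since $\m(\Omega)=1$ one has the Pythagorean identity $\|f-\bar f\|_{L^2}^2=\|f\|_{L^2}^2-\bar f^2\ge\|f\|_{L^2}^2-\|f\|_{L^1}^2$, so Poincar\'e--Wirtinger plus $\|f\|_{L^1}\le\|f\|_{L^q}$ (valid for $q\ge1$ on a probability space) gives $\|f\|_{L^2}^2\le C_P^2\|\na f\|_{L^2}^2+\|f\|_{L^q}^2$ with no spurious constant, and only then does one raise to the power $q/2\le1$, where subadditivity is legitimate. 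Your additive triangle-inequality decomposition cannot reproduce this because it works with first powers rather than squares.

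For $0<q\le1$ the gap is more serious: the bound you need, $\big|\int_\Omega f\,dx\big|^q\le\|f\|_{L^q(\Omega)}^q$, is simply false in general. For nonnegative $f$, Jensen's inequality for the concave map $t\mapsto t^q$ gives $\big(\int_\Omega f\,dx\big)^q\ge\int_\Omega f^q\,dx$, with strict inequality unless $f$ is constant, so the inequality you want points the wrong way, and no rearrangement of norm inclusions rescues it ($\|f\|_{L^q}\le\|f\|_{L^1}$ for $q\le1$, again the wrong direction). The paper's proof for $0<q<1$ requires two additional ingredients for which your plan has no substitute: the elementary inequality $a^{q/2}-a^{q-1}b^{1-q/2}\le(a-b)^{q/2}$ for $a\ge b>0$, applied with $a=\|f\|_{L^2}^2$ and $b=\|f\|_{L^1}^2$ to the Poincar\'e--Wirtinger bound, followed by the interpolation $\|f\|_{L^1}\le\|f\|_{L^q}^{\theta}\|f\|_{L^2}^{1-\theta}$ with $\theta=q/(2-q)$, which eliminates the $L^1$ norm in favor of the $L^q$ norm with exactly the right exponents. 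Without something of this kind the case $0<q<1$ remains unproved.
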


\begin{proof}
Let first $1\le q\le 2$. The Poincar\'e-Wirtinger inequality \eqref{b.pwi}
\begin{equation}\label{b.pwi2}
  \|f\|_{L^2(\Omega)}^2 - \|f\|_{L^1(\Omega)}^2
  = \|f-\bar f\|_{L^2(\Omega)}^2 \le C_P^2\|\na f\|_{L^2(\Omega)}^2
\end{equation}
together with the H\"older inequality leads to
\begin{equation}\label{b.aux2}
  \|f\|_{L^2(\Omega)}^2 \le C_P^2\|\na f\|_{L^2(\Omega)}^2
  + \|f\|_{L^q(\Omega)}^2.
\end{equation}
Here we use the assumption $\mbox{m}(\Omega)=1$. 
Since $q/2\le 1$, it follows that
$$
  \|f\|_{L^2(\Omega)}^q 
  \le \big(C_P^2\|\na f\|_{L^2(\Omega)}^2 + \|f\|_{L^q(\Omega)}^2\big)^{q/2}
  \le C_P^q\|\na f\|_{L^2(\Omega)}^q + \|f\|_{L^q(\Omega)}^q,
$$
which equals \eqref{b.gpw}.

Next, let $0<q<1$. We claim that
\begin{equation}\label{b.ab}
  a^{q/2} - a^{q-1}b^{1-q/2} \le (a-b)^{q/2} \quad\mbox{for all }a\ge b>0.
\end{equation}
Indeed, setting $c=b/a$, this inequality is equivalent to
$$
  1-c^{1-q/2} \le (1-c)^{q/2} \quad\mbox{for all }0<c\le 1.
$$
The function $g(c)=1-c^{1-q/2}-(1-c)^{q/2}$ for $c\in[0,1]$ satisfies
$g(0)=g(1)=0$ and $g''(c)=(q/2)(1-q/2)(c^{-1-q/2}+(1-c)^{q/2-2})\ge 0$ for
$c\in(0,1)$, which implies that $g(c)\le 0$, proving \eqref{b.ab}.
Applying \eqref{b.ab} to $a=\|f\|_{L^2(\Omega)}^2$ and $b=\|f\|_{L^1(\Omega)}^2$
and using \eqref{b.pwi2}, we find that
\begin{equation}\label{b.ab2}
  \|f\|_{L^2(\Omega)}^q - \|f\|_{L^2(\Omega)}^{2(q-1)}\|f\|_{L^1(\Omega)}^{2-q}
  \le \big(\|f\|_{L^2(\Omega)}^2-\|f\|_{L^1(\Omega)}^2\big)^{q/2}
  \le C_P^q\|\na f\|_{L^2(\Omega)}^q.
\end{equation}
In order to get rid of the $L^1$ norm, we employ the interpolation inequality
\begin{equation}\label{b.aux4}
  \|f\|_{L^1(\Omega)} = \int_\Omega|f|^\theta|f|^{1-\theta}dx
  \le \|f\|_{L^q(\Omega)}^\theta\|f\|_{L^2(\Omega)}^{1-\theta},
\end{equation}
where $\theta=q/(2-q)<1$. Since $(2-q)\theta=q$ and $(2-q)(1-\theta)=2(1-q)$,
\eqref{b.ab2} becomes
$$
  \|f\|_{L^2(\Omega)}^q - \|f\|_{L^q(\Omega)}^q 
  \le C_P^q\|\na f\|_{L^2(\Omega)}^q,
$$
which is the desired inequality.
\end{proof}

\begin{lemma}[Generalized Beckner inequality I]\label{lem.beckner}
Let $d\ge 1$, $0<q<2$, $pq\ge 1$ or $q=2$, $\frac12-\frac{1}{d}\le p\le 1$
$(0<p\le 1$ if $d\le 2)$, 
and let $f\in H^1(\Omega)$. Then the generalized Beckner inequality
\begin{equation}\label{b.beckner}
  \int_\Omega |f|^q dx - \left(\int_\Omega|f|^{1/p}dx\right)^{pq}
  \le C_B(p,q)\|\na f\|_{L^2(\Omega)}^q
\end{equation}
holds, where 
$$
  C_B(p,q) = \frac{2(pq-1)C_P^q}{2-q}\quad\mbox{if }q<2, \quad
  C_B(p,2) = C_P^2\quad\mbox{if }q=2,
$$
and $C_P>0$ is the constant of the Poincar\'e-Wirtinger inequality \eqref{b.pwi}.
\end{lemma}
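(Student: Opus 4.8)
The plan is to split into the cases $q=2$ and $q<2$, which need genuinely different arguments and which account for the two formulas for $C_B(p,q)$. In both cases I may assume $f\ge 0$, since both sides of \eqref{b.beckner} depend only on $|f|$ and $|\nabla|f||=|\nabla f|$ a.e. For $q=2$ the hypothesis $\frac12-\frac1d\le p$ (respectively $0<p\le1$ when $d\le2$) is precisely the Sobolev embedding $H^1(\Omega)\hookrightarrow L^{1/p}(\Omega)$, which guarantees $\int_\Omega f^{1/p}\,dx<\infty$. Since $1/p\ge1$, Jensen's inequality on the probability space $\Omega$ gives $(\int_\Omega f^{1/p}\,dx)^{p}\ge\int_\Omega f\,dx$, hence $(\int_\Omega f^{1/p}\,dx)^{2p}\ge(\int_\Omega f\,dx)^2=\|f\|_{L^1(\Omega)}^2$. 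Inserting this into the Poincar\'e--Wirtinger inequality in the form \eqref{b.pwi2} yields
\[
  \int_\Omega f^2\,dx-\Big(\int_\Omega f^{1/p}\,dx\Big)^{2p}
  \le\|f\|_{L^2(\Omega)}^2-\|f\|_{L^1(\Omega)}^2\le C_P^2\|\nabla f\|_{L^2(\Omega)}^2,
\]
which is \eqref{b.beckner} with $C_B(p,2)=C_P^2$.

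For $0<q<2$ I would write $s=1/p$, so that $pq\ge1$ becomes $0<s\le q<2$, and set $X=(\int_\Omega f^{s}\,dx)^{q/s}$, $Y=\int_\Omega f^{q}\,dx$, $Z=(\int_\Omega f^{2}\,dx)^{q/2}$; note $X=(\int_\Omega f^{1/p}\,dx)^{pq}$, so \eqref{b.beckner} reads $Y-X\le C_B\|\nabla f\|_{L^2(\Omega)}^q$. The first step is moment interpolation: with $t=\frac{q-s}{2-s}\in[0,1]$ one has $q=(1-t)s+2t$, and H\"older's inequality with conjugate exponents $\frac1{1-t},\frac1t$ applied to $f^q=(f^s)^{1-t}(f^2)^t$ gives $Y\le(\int_\Omega f^s\,dx)^{1-t}(\int_\Omega f^2\,dx)^{t}=X^{\mu}Z^{1-\mu}$ with $\mu=\frac{s(2-q)}{q(2-s)}$; this needs no integrability beyond $f\in L^2(\Omega)$, which is why no restriction on $p$ is imposed when $q<2$. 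The second step is Young's inequality $X^{\mu}Z^{1-\mu}\le\mu X+(1-\mu)Z$, which after rearrangement becomes
\[
  Y-X\le\frac{1-\mu}{\mu}\,(Z-Y)=\frac{2(pq-1)}{2-q}\,(Z-Y).
\]
The third step is Lemma \ref{lem.gpw} with exponent $q$, i.e. $Z-Y=\|f\|_{L^2(\Omega)}^q-\|f\|_{L^q(\Omega)}^q\le C_P^q\|\nabla f\|_{L^2(\Omega)}^q$. Chaining the two displays gives \eqref{b.beckner} with $C_B(p,q)=\frac{2(pq-1)}{2-q}C_P^q$.

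The main thing to get right is the exponent bookkeeping in the first two steps, and in particular that the Young split reproduces the constant $\frac{2(pq-1)}{2-q}$ exactly. I would check this by substituting $\kappa=pq=q/s$, which turns $\mu$ into $\frac{2-q}{2\kappa-q}$ and $1-\mu$ into $\frac{2(\kappa-1)}{2\kappa-q}$, so that $\frac{1-\mu}{\mu}=\frac{2(\kappa-1)}{2-q}$ as required; the condition $t\in[0,1]$ is equivalent to $s\le q\le2$, i.e. exactly to $pq\ge1$ together with $q\le2$. The reason $q=2$ must be excluded from this scheme is that there $Y=Z$, so $Z-Y=0$ and the factor $\frac{2(pq-1)}{2-q}$ degenerates; the separate power--mean argument above is what replaces it and produces the sharper constant $C_P^2$. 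I expect no serious analytic difficulty here, only the verification that each elementary inequality is applied with admissible exponents.
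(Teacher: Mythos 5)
Your proof is correct, and for the case $0<q<2$ it takes a genuinely different route from the paper. The $q=2$ case (Jensen plus Poincar\'e--Wirtinger) coincides with the paper's argument. For $q<2$, the paper proceeds in two nontrivial stages: it first differentiates an interpolation identity at $r=q$ to obtain the entropy bound \eqref{b.aux}, combines it with Lemma \ref{lem.gpw} to get the generalized logarithmic Sobolev inequality \eqref{b.lsi}, and then invokes a Latala--Oleszkiewicz-type convexity argument for $G(r)=r\log\int|f|^{q/r}dx$ to pass from the entropy to the Beckner difference via \eqref{b.aux3}. You bypass the logarithmic intermediary entirely: H\"older interpolation $Y\le X^{\mu}Z^{1-\mu}$ with $\mu=\frac{s(2-q)}{q(2-s)}$, followed by the weighted arithmetic--geometric mean inequality and the rearrangement $Y-X\le\frac{1-\mu}{\mu}(Z-Y)$, reduces everything to Lemma \ref{lem.gpw}; your exponent bookkeeping checks out ($\mu\in(0,1]$ exactly when $pq\ge1$ and $q<2$, and $\frac{1-\mu}{\mu}=\frac{2(pq-1)}{2-q}$), so you recover the same constant $C_B(p,q)$ with a shorter and more elementary argument that would also transfer verbatim to the discrete setting of Lemma \ref{lem.d.bec}. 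What your route gives up is the intermediate inequalities themselves: the paper reuses \eqref{b.aux} and \eqref{b.aux3} in the proof of Lemma \ref{lem.beckner2}, and \eqref{b.lsi} is the $pq\to1$ limit discussed in Remark \ref{rem.bec}, so if one adopted your proof those statements would still have to be established separately.
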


\begin{remark}\label{rem.bec}\rm
The case $q=2$ corresponds to the usual Beckner inequality \cite{Bec89}
$$
  \int_\Omega |f|^2 dx - \left(\int_\Omega|f|^{2/r}dx\right)^{r}
  \le C_B(p,2)\|\na f\|_{L^2(\Omega)}^2, 
$$
where $1\le r=2p\le 2$. It is shown in \cite{DNS08} that the constant $C_B(p,2)$
can be related to the lowest positive eigenvalue of a Schr\"odinger operator if
$\Omega$ is convex.
On the one-dimensonal torus, the generalized Beckner inequality \eqref{b.beckner} 
for $p>0$ and $0<q<2$ has been derived in \cite{CDGJ06}. 
In the multi-dimensional situation, the special case $p=2/q$ was 
proved in \cite{DGGW08}. In this work, it was also shown that
\eqref{b.beckner} with $q>2$ and $p=2/q$ cannot be true unless the Lebesgue measure
$dx$ is replaced by the Dirac measure.
In the limit $pq\to 1$, \eqref{b.beckner} leads to a generalized logarithmic Sobolev
inequality (see \eqref{b.lsi} below). If $q=2$ in this limit, the usual
logarithmic Sobolev inequality \cite{Gro75} is obtained.
\qed
\end{remark}

\begin{proof}[Proof of Lemma \ref{lem.beckner}]
Let first $q=2$. Then the Beckner inequality is a consequence
of the Poincar\'e-Wirtinger inequality \eqref{b.pwi} and the Jensen inequality:
$$
  C_P^2\|\na f\|_{L^2(\Omega)}^2 \ge \|f-\bar f\|_{L^2(\Omega)}^2
  = \|f\|_{L^2(\Omega)}^2 - \|f\|_{L^1(\Omega)}^2
  \ge  \int_\Omega f^2 dx - \left(\int_\Omega |f|^{2/r}dx\right)^r,
$$
where $1-\frac{2}{d}\le r\le 2$ ($0<r\le 2$ if $d\le 2$). 
The lower bound for $r$ ensures that the embedding
$H^1(\Omega)\hookrightarrow L^{2/r}(\Omega)$ is continuous.
The choice $p=r/2\in[\frac12-\frac{1}{d},1]$ yields the formulation \eqref{b.beckner}.

Next, let $0<q<2$.
The first part of the proof is inspired by the proof of
Proposition 2.2 in \cite{DGGW08}. 
Taking the logarithm of the interpolation inequality
$$
  \|f\|_{L^r(\Omega)} \le \|f\|_{L^q(\Omega)}^{\theta(r)}
  \|f\|_{L^2(\Omega)}^{1-\theta(r)},
$$
where $q\le r\le 2$ and $\theta(r)=q(2-r)/(r(2-q))$, gives
$$
  F(r) := \frac{1}{r}\log\int_\Omega |f|^r dx 
  - \frac{\theta(r)}{q}\log\int_\Omega |f|^q dx
  - \frac{1-\theta(r)}{2}\log\int_\Omega |f|^2 dx \le 0.
$$
The function $F:[q,2]\to\R$ is nonpositive, differentiable and $F(q)=0$. Therefore,
$F'(q)\le 0$, which equals
\begin{align*}
  -\frac{1}{q^2}\log\int_\Omega|f|^q dx 
  &+ \frac{1}{q}\left(\int_\Omega|f|^q dx\right)^{-1}\int_\Omega |f|^q\log|f|dx \\
  &{}+ \theta'(q)\left(\frac{1}{2}\log\int_\Omega |f|^2 dx 
  - \frac{1}{q}\log\int_\Omega |f|^q dx\right) \le 0.
\end{align*}
We multiply this inequality by $q^2\int_\Omega|f|^q dx$ to obtain
\begin{equation}\label{b.aux}
  \int_\Omega |f|^q\log\frac{|f|^q}{\|f\|_{L^q(\Omega)}^q}dx
  \le \frac{2}{2-q}\|f\|_{L^q(\Omega)}^q\log
  \frac{\|f\|_{L^2(\Omega)}^q}{\|f\|_{L^q(\Omega)}^q}.
\end{equation}
Then, we employ Lemma \ref{lem.gpw} and the inequality $\log(x+1)\le x$ for
$x\ge 0$ to infer that
$$
  \|f\|_{L^q(\Omega)}^q\log\frac{\|f\|_{L^2(\Omega)}^q}{\|f\|_{L^q(\Omega)}^q}
  \le \|f\|_{L^q(\Omega)}^q\log\left(\frac{C_P^q 
  \|\na f\|_{L^2(\Omega)}^q}{\|f\|_{L^q(\Omega)}^q} + 1\right) 
  \le C_P^q \|\na f\|_{L^2(\Omega)}^q.
$$
Combining this inequality and \eqref{b.aux}, we conclude the generalized
logarithmic Sobolev inequality 
\begin{equation}\label{b.lsi}
  \int_\Omega |f|^q\log\frac{|f|^q}{\|f\|_{L^q(\Omega)}^q}dx
  \le \frac{2C_P^q}{2-q}\|\na f\|_{L^2(\Omega)}^q.
\end{equation}

The generalized Beckner inequality \eqref{b.beckner} 
is derived by extending slightly the 
proof of \cite[Corollary 1]{LaOl00}. Let
$$
  G(r) = r\log\int_\Omega |f|^{q/r} dx, \quad r\ge 1.
$$
The function $G$ is twice differentiable with
\begin{align*}
  G'(r) &= \left(\int_\Omega |f|^{q/r}dx\right)^{-1}\left(
  \int_\Omega |f|^{q/r}dx\log\int_\Omega|f|^{q/r}dx 
  - \frac{q}{r}\int_\Omega|f|^{q/r}\log|f|dx\right), \\
  G''(r) &= \frac{q^2}{r^3}\left(\int_\Omega |f|^{q/r}dx\right)^{-2}
  \left(\int_\Omega |f|^{q/r}dx\int_\Omega|f|^{q/r}(\log|f|)^2dx
  - \left(\int_\Omega|f|^{q/r}\log|f|dx\right)^2\right).
\end{align*}
The Cauchy-Schwarz inequality shows that $G''(r)\ge 0$, i.e., $G$ is convex.
Consequently, $r\mapsto e^{G(r)}$ is also convex and
$r\mapsto H(r) = -(e^{G(r)}-e^{G(1)})/(r-1)$ is nonincreasing on $(1,\infty)$, 
which implies that
$$
  H(r) \le \lim_{t\to 1} H(t) = -e^{G(1)}G'(1) 
  = \int_\Omega |f|^q\log\frac{|f|^q}{\|f\|_{L^q(\Omega)}^q}dx.
$$
This inequality is equivalent to
\begin{equation}\label{b.aux3}
  \frac{1}{r-1}\left(\int_\Omega|f|^{q}dx 
  - \left(\int_\Omega|f|^{q/r}dx\right)^r \right)
  \le \int_\Omega |f|^q\log\frac{|f|^q}{\|f\|_{L^q(\Omega)}^q}dx.
\end{equation}
Combining this inequality and the generalized logarithmic Sobolev inequality
\eqref{b.lsi}, it follows that
$$
  \int_\Omega|f|^{q}dx - \left(\int_\Omega|f|^{q/r}dx\right)^r
  \le \frac{2(r-1)C_P^q}{2-q}\|\na f\|_{L^2(\Omega)}^q
$$
for all $0<q<2$ and $r\ge 1$. Setting $p:=r/q$, this proves \eqref{b.beckner}
for all $pq=r\ge 1$.
\end{proof}

For the proof of exponential decay rates, we need the following variant of
the Beckner inequality.

\begin{lemma}[Generalized Beckner inequality II]\label{lem.beckner2}
Let $0<q<2$, $pq\ge 1$ and $f\in H^1(\Omega)$. Then
\begin{equation}\label{b.beckner2}
  \|f\|_{L^q(\Omega)}^{2-q}
  \left(\int_\Omega|f|^q dx - \left(\int_\Omega|f|^{1/p}dx\right)^{pq}\right)
  \le C_B'(p,q)\|\na f\|_{L^2(\Omega)}^2,
\end{equation}
where 
$$
  C_B'(p,q) = \left\{\begin{array}{ll}
  \dfrac{q(pq-1)C_P^2}{2-q} &\mbox{ if }1\le q<2, \\[3mm]
  (pq-1)C_P^2 &\mbox{ if }0<q<1.
  \end{array}\right.
$$
\end{lemma}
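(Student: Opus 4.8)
The plan is to recycle the two intermediate estimates already established in the proof of Lemma~\ref{lem.beckner}, namely \eqref{b.aux} and the convexity estimate \eqref{b.aux3}, but to \emph{linearize} the final step. There, the logarithmic factor in \eqref{b.aux} was discarded via $\log(x+1)\le x$, producing the $q$-th power $\|\na f\|_{L^2(\Omega)}^q$; to obtain the square $\|\na f\|_{L^2(\Omega)}^2$ instead, I would retain the logarithm and bound it through $\log t\le t-1$, which is precisely what introduces the extra prefactor $\|f\|_{L^q(\Omega)}^{2-q}$ on the left of \eqref{b.beckner2}.

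The common part is as follows. Writing $\log(\|f\|_{L^2(\Omega)}^q/\|f\|_{L^q(\Omega)}^q)=\tfrac{q}{2}\log(\|f\|_{L^2(\Omega)}^2/\|f\|_{L^q(\Omega)}^2)$ in \eqref{b.aux}, combining with \eqref{b.aux3} for $r=pq\ge1$ (so that $q/r=1/p$ and $r-1=pq-1$), and multiplying by $\|f\|_{L^q(\Omega)}^{2-q}$, I obtain for every $0<q<2$
$$
  \|f\|_{L^q(\Omega)}^{2-q}\left(\int_\Omega|f|^q dx - \Big(\int_\Omega|f|^{1/p}dx\Big)^{pq}\right)
  \le \frac{q(pq-1)}{2-q}\,\|f\|_{L^q(\Omega)}^{2}\log\frac{\|f\|_{L^2(\Omega)}^2}{\|f\|_{L^q(\Omega)}^2}.
$$
Since $\m(\Omega)=1$ we have $\|f\|_{L^q(\Omega)}\le\|f\|_{L^2(\Omega)}$, so the logarithm is nonnegative, and it remains only to control the right-hand side by $\|\na f\|_{L^2(\Omega)}^2$.

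For $1\le q<2$ this is immediate: the elementary inequality $t\log(s/t)\le s-t$ with $s=\|f\|_{L^2(\Omega)}^2$, $t=\|f\|_{L^q(\Omega)}^2$ yields $\|f\|_{L^q(\Omega)}^{2}\log(\|f\|_{L^2(\Omega)}^2/\|f\|_{L^q(\Omega)}^2)\le\|f\|_{L^2(\Omega)}^2-\|f\|_{L^q(\Omega)}^2$, and \eqref{b.aux2} bounds the latter by $C_P^2\|\na f\|_{L^2(\Omega)}^2$. The prefactor $q(pq-1)/(2-q)$ then survives unchanged, giving the constant $C_B'(p,q)=q(pq-1)C_P^2/(2-q)$.

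The main obstacle is the range $0<q<1$, where \eqref{b.aux2} is no longer available because the H\"older ordering reverses to $\|f\|_{L^q(\Omega)}\le\|f\|_{L^1(\Omega)}$, so that one cannot absorb the factor $q/(2-q)$ by the same argument. The idea is to route the logarithm through the $L^1$-norm. Taking logarithms in the interpolation inequality \eqref{b.aux4}, with $\theta=q/(2-q)$, gives $\tfrac{q}{2-q}\log(\|f\|_{L^2(\Omega)}/\|f\|_{L^q(\Omega)})\le\log(\|f\|_{L^2(\Omega)}/\|f\|_{L^1(\Omega)})$, whence
$$
  \frac{q}{2-q}\,\|f\|_{L^q(\Omega)}^{2}\log\frac{\|f\|_{L^2(\Omega)}^2}{\|f\|_{L^q(\Omega)}^2}
  \le \|f\|_{L^q(\Omega)}^{2}\log\frac{\|f\|_{L^2(\Omega)}^2}{\|f\|_{L^1(\Omega)}^2}
  \le \|f\|_{L^1(\Omega)}^{2}\log\frac{\|f\|_{L^2(\Omega)}^2}{\|f\|_{L^1(\Omega)}^2},
$$
the last step using $\|f\|_{L^q(\Omega)}\le\|f\|_{L^1(\Omega)}$ and nonnegativity of the logarithm. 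A final application of $t\log(s/t)\le s-t$ with $t=\|f\|_{L^1(\Omega)}^2$, followed by the Poincar\'e-Wirtinger inequality \eqref{b.pwi2}, bounds this by $\|f\|_{L^2(\Omega)}^2-\|f\|_{L^1(\Omega)}^2\le C_P^2\|\na f\|_{L^2(\Omega)}^2$. Thus $\tfrac{q}{2-q}\|f\|_{L^q(\Omega)}^{2}\log(\|f\|_{L^2(\Omega)}^2/\|f\|_{L^q(\Omega)}^2)\le C_P^2\|\na f\|_{L^2(\Omega)}^2$; combined with the factor $pq-1$ already present above, this absorbs $q/(2-q)$ and leaves the cleaner constant $C_B'(p,q)=(pq-1)C_P^2$. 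The distinction between the two constants in the statement is therefore exactly a reflection of whether the $L^1$-detour is needed to cross the threshold $q=1$, and the only routine points to check are the nonnegativity of the logarithms and the correct tracking of H\"older's inequality on either side of that threshold.
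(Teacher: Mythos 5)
Your argument is correct and recovers both constants exactly. For $1\le q<2$ it coincides with the paper's proof: your inequality $t\log(s/t)\le s-t$ applied with $s=\|f\|_{L^2(\Omega)}^2$, $t=\|f\|_{L^q(\Omega)}^2$ together with \eqref{b.aux2} is literally the same step as the paper's $\log(x+1)\le x$ applied after substituting $\|f\|_{L^2(\Omega)}^2\le C_P^2\|\na f\|_{L^2(\Omega)}^2+\|f\|_{L^q(\Omega)}^2$ (see \eqref{b.aux5}), only with the bookkeeping of the prefactor $\|f\|_{L^q(\Omega)}^{2-q}$ done at a different moment.

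Where you genuinely diverge is the range $0<q<1$. The paper first establishes the auxiliary inequality \eqref{b.wirt}, namely $\|f\|_{L^q(\Omega)}^2+\tfrac{2-q}{q}C_P^2\|\na f\|_{L^2(\Omega)}^2-\|f\|_{L^2(\Omega)}^2\ge 0$, by a separate convexity argument (interpreting the deficit as a convex function $G$ of $\|f\|_{L^1(\Omega)}^2$ with a double zero at $A=\|f\|_{L^2(\Omega)}^2$), and only then applies $\log(x+1)\le x$ to absorb the factor $q/(2-q)$. You bypass \eqref{b.wirt} entirely: you take logarithms in the interpolation inequality \eqref{b.aux4} to convert the factor $q/(2-q)=\theta$ into a passage from the $L^q$- to the $L^1$-based logarithm, then use the monotonicity $\|f\|_{L^q(\Omega)}\le\|f\|_{L^1(\Omega)}\le\|f\|_{L^2(\Omega)}$ on the probability space and one more application of $t\log(s/t)\le s-t$ at $t=\|f\|_{L^1(\Omega)}^2$, finishing with \eqref{b.pwi2}. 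Both routes land on the same constant $(pq-1)C_P^2$; yours is arguably the more economical, since all the work is done by concavity of the logarithm and norm monotonicity, whereas the paper's requires the ad hoc verification that $G$ is convex with $G(A)=G'(A)=0$. The only points worth stating explicitly in a final write-up are the degenerate case $f\equiv\mathrm{const}$ (both sides vanish) and the sign checks on the logarithms, which you have already flagged.
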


\begin{proof}
By \eqref{b.aux}, it holds that for all $0<q<2$,
$$
  \int_\Omega|f|^q\log\frac{|f|^q}{\|f\|_{L^q(\Omega)}^q}dx
  \le \frac{q}{2-q}\|f\|_{L^q(\Omega)}^q\log
  \frac{\|f\|_{L^2(\Omega)}^2}{\|f\|_{L^q(\Omega)}^2}.
$$
Then, for $q>1$, the Poincar\'e-Wirtinger inequality in the version 
\eqref{b.aux2} and the inequality $\log(x+1)\le x$ for $x\ge 0$ yield
\begin{equation}\label{b.aux5}
  \|f\|_{L^q(\Omega)}^q\log
  \frac{\|f\|_{L^2(\Omega)}^2}{\|f\|_{L^q(\Omega)}^2}
  \le \|f\|_{L^q(\Omega)}^q\log\left(
  C_P^2\frac{\|\na f\|_{L^2(\Omega)}^2}{\|f\|_{L^q(\Omega)}^2} + 1\right)
  \le C_P^2\|f\|_{L^q(\Omega)}^{q-2}\|\na f\|_{L^2(\Omega)}^2.
\end{equation}
Taking into account \eqref{b.aux3}, the conclusion follows for $q>1$.

Let $0<q\le 1$. Suppose that the following inequality holds:
\begin{equation}\label{b.wirt}
  \|f\|_{L^q(\Omega)}^2 + \frac{2-q}{q}C_P^2\|\na f\|_{L^2(\Omega)}^2
  - \|f\|_{L^2(\Omega)}^2 \ge 0.
\end{equation}
This implies that, by \eqref{b.aux3} and for $r=pq$,
\begin{align*}
  \int_\Omega|f|^q dx - \left(\int_\Omega|f|^{q/r}dx\right)^r
  &\le \frac{(pq-1)q}{2-q}\|f\|_{L^q(\Omega)}^q\log
  \frac{\|f\|_{L^2(\Omega)}^2}{\|f\|_{L^q(\Omega)}^2} \\
  &\le \frac{(pq-1)q}{2-q}\|f\|_{L^q(\Omega)}^q\log\left(\frac{(2-q)C_P^2}{q}\,
  \frac{\|\na f\|_{L^2(\Omega)}^2}{\|f\|_{L^q(\Omega)}^2} + 1\right) \\
  &\le (pq-1)C_P^2\|\na f\|_{L^2(\Omega)}^2\|f\|_{L^q(\Omega)}^{q-2},
\end{align*}
which shows the desired Beckner inequality.

It remains to prove \eqref{b.wirt}. For this, 
we employ the Poincar\'e-Wirtinger inequality \eqref{b.pwi2}
$$
  C_P^2\|\na f\|_{L^2(\Omega)}^2\ge \|f\|_{L^2(\Omega)}^2 - \|f\|_{L^1(\Omega)}^2
$$
and the interpolation inequality \eqref{b.aux4} in the form
$$
  \|f\|_{L^q(\Omega)}^2 \ge \|f\|_{L^1(\Omega)}^{2/\theta}
  \|f\|_{L^2(\Omega)}^{2(\theta-1)/\theta}, \quad \theta =  \frac{q}{2-q} \le 1,
$$
to obtain
\begin{align*}
  \|f\|_{L^q(\Omega)}^2 &+ \frac{2-q}{q}C_P^2\|\na f\|_{L^2(\Omega)}^2
  - \|f\|_{L^2(\Omega)} \\
  &\ge \|f\|_{L^1(\Omega)}^{2/\theta}\|f\|_{L^2(\Omega)}^{2(\theta-1)/\theta}
  + \left(\frac{2-q}{q}-1\right)\|f\|_{L^2(\Omega)}^2
  - \frac{2-q}{q}\|f\|_{L^1(\Omega)}^2.
\end{align*}
We interpret the right-hand side as a function $G$ of $\|f\|_{L^1(\Omega)}^2$.
Then, setting $A=\|f\|_{L^2(\Omega)}^2$,
\begin{align*}
  G(y) &= y^{1/\theta}A^{1-1/\theta} + \frac{2(1-q)}{q}A - \frac{2-q}{q}y, \\
  G'(y) &= \frac{1}{\theta}y^{1/\theta-1}A^{1-1/\theta} - \frac{2-q}{q}, \\
  G''(y) &= \frac{1}{\theta}\left(\frac{1}{\theta}-1\right)
 y^{1/\theta-2}A^{1-1/\theta} \ge 0,
\end{align*}
Therefore, $G$ is a convex function which satisfies $G(A)=0$ and $G'(A)=0$. 
This implies that $G$ is a nonnegative function on $\R^+$ and in particular,
$G(\|f\|_{L^1(\Omega)}^2) \ge 0$.
This proves \eqref{b.wirt}, completing the proof.
\end{proof}


\subsection{Zeroth-order entropies}\label{sec.cont.E}

Let $u$ be a smooth solution to \eqref{1.pme}-\eqref{1.bc} and
let $u_0\in L^\infty(\Omega)$, $\inf_\Omega u_0\geq0$ in $\Omega$. 
By the maximum principle,
$0\leq\inf_\Omega u_0\le u(t)\le \sup_\Omega u_0$ in $\Omega$ for $t\ge 0$.
First, we prove algebraic decay rates for $E_\alpha[u]$, defined in \eqref{1.E}.

\begin{theorem}[Polynomial decay for $E_\alpha$]\label{thm.E.poly}
Let $\alpha>0$ and $\beta>1$. Let $u$ be a smooth solution to 
\eqref{1.pme}-\eqref{1.bc} and $u_0\in L^\infty(\Omega)$ with $\inf_\Omega u_0\geq0$. Then
$$
  E_\alpha[u(t)] \le \frac{1}{(C_1t + C_2)^{(\alpha+1)/(\beta-1)}}, \quad t\ge 0,
$$
where
$$
  C_1 = \frac{4\alpha\beta(\beta-1)}{(\alpha+1)(\alpha+\beta)^2}
  \left(\frac{\alpha+1}{C_B(p,q)}\right)^{(\alpha+\beta)/(\alpha+1)}, \quad
  C_2 = E[u_0]^{-(\beta-1)/(\alpha+1)},
$$
and $C_B(p,q)>0$ is the constant in the Beckner inequality for
$p=(\alpha+\beta)/2$ and $q=2(\alpha+1)/(\alpha+\beta)$.
\end{theorem}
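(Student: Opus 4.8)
The plan is to run the entropy--dissipation method: differentiate $E_\alpha[u(t)]$ in time, recognize the dissipation as a weighted Dirichlet energy, bound it from below by a superlinear power of $E_\alpha[u]$ through the Beckner inequality \eqref{b.beckner}, and finally integrate the resulting scalar differential inequality.

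First I would compute $\frac{d}{dt}E_\alpha[u(t)]=\int_\Omega u^\alpha u_t\,dx$, where the term coming from $(\int_\Omega u\,dx)^{\alpha+1}$ drops out because mass is conserved: integrating \eqref{1.pme} and using \eqref{1.bc} gives $\frac{d}{dt}\int_\Omega u\,dx=\int_\Omega\Delta(u^\beta)\,dx=0$. Substituting $u_t=\Delta(u^\beta)$ and integrating by parts, the boundary term vanishes by the Neumann condition \eqref{1.bc}, leaving
\[
  \frac{d}{dt}E_\alpha[u(t)] = -\alpha\beta\int_\Omega u^{\alpha+\beta-2}|\nabla u|^2\,dx
  = -\frac{4\alpha\beta}{(\alpha+\beta)^2}\big\|\nabla\big(u^{(\alpha+\beta)/2}\big)\big\|_{L^2(\Omega)}^2,
\]
where the last equality merely rewrites the gradient weight via the chain rule.

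The crucial step is to apply Lemma \ref{lem.beckner} to $f=u^{(\alpha+\beta)/2}$ with the choices $p=(\alpha+\beta)/2$ and $q=2(\alpha+1)/(\alpha+\beta)$. With these exponents $|f|^q=u^{\alpha+1}$ and $|f|^{1/p}=u$, so the left-hand side of \eqref{b.beckner} is exactly $(\alpha+1)E_\alpha[u]$, while $pq=\alpha+1$. The hypotheses of the lemma hold precisely because $\beta>1$ and $\alpha>0$: one checks that $0<q<2$ is equivalent to $\beta>1$, and that $pq=\alpha+1\ge1$. This is the only place where $\beta>1$ enters, and it is what forces the decay to be algebraic rather than exponential. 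Rearranging \eqref{b.beckner} and raising to the power $2/q=(\alpha+\beta)/(\alpha+1)$ turns the dissipation estimate into
\[
  \frac{d}{dt}E_\alpha[u(t)]
  \le -\frac{4\alpha\beta}{(\alpha+\beta)^2}
  \left(\frac{\alpha+1}{C_B(p,q)}\right)^{(\alpha+\beta)/(\alpha+1)}
  E_\alpha[u(t)]^{(\alpha+\beta)/(\alpha+1)}.
\]

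Writing $y(t)=E_\alpha[u(t)]$, $\gamma=(\alpha+\beta)/(\alpha+1)>1$, and letting $K$ denote the coefficient on the right-hand side above, this is a Bernoulli-type inequality $y'\le -Ky^\gamma$. I would solve it by differentiating $y^{1-\gamma}$: since $1-\gamma<0$ one gets $\frac{d}{dt}(y^{1-\gamma})\ge(\gamma-1)K$, and integrating from $0$ to $t$ yields $y(t)^{-(\gamma-1)}\ge(\gamma-1)Kt+y(0)^{-(\gamma-1)}$. Using $\gamma-1=(\beta-1)/(\alpha+1)$ then produces the claimed bound with $C_1=(\gamma-1)K$ and $C_2=E_\alpha[u_0]^{-(\beta-1)/(\alpha+1)}$. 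The only remaining work is the bookkeeping needed to confirm that $(\gamma-1)K$ collapses to the stated $C_1$; there is no genuine analytic obstacle once Lemma \ref{lem.beckner} is available, since the assumed smoothness and positivity of $u$ make all the manipulations (chain rule, integration by parts, $f\in H^1(\Omega)$) legitimate.
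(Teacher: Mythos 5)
Your proposal is correct and follows essentially the same route as the paper: differentiate $E_\alpha$, rewrite the dissipation as $\frac{4\alpha\beta}{(\alpha+\beta)^2}\|\nabla(u^{(\alpha+\beta)/2})\|_{L^2}^2$, apply Lemma \ref{lem.beckner} with $p=(\alpha+\beta)/2$, $q=2(\alpha+1)/(\alpha+\beta)$, $f=u^{(\alpha+\beta)/2}$, and integrate the resulting Bernoulli-type differential inequality. Your verification that $0<q<2$ is equivalent to $\beta>1$ and that $pq=\alpha+1$, together with the explicit integration via $y^{1-\gamma}$, matches the paper's argument (which states these steps more tersely), and the constants collapse to the stated $C_1$, $C_2$ exactly as you indicate.
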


\begin{proof}
We apply Lemma \ref{lem.beckner} with $p=(\alpha+\beta)/2$ and
$q=2(\alpha+1)/(\alpha+\beta)$. The assumptions on $\alpha$ and $\beta$ guarantee that
$0<q<2$ and $pq>1$. Then, with $f=u^{(\alpha+\beta)/2}$,
$$
  E_\alpha[u] = \frac{1}{\alpha+1}\left(
  \int_{\Omega} u^{\alpha+1}dx - \left(\int_{\Omega}udx\right)^{\alpha+1}\right)
  \le \frac{C_B(p,q)}{\alpha+1}\left(\int_{\Omega}|\na u^{(\alpha+\beta)/2}|^2 dx
  \right)^{(\alpha+1)/(\alpha+\beta)}.
$$
Now, computing the derivative, 
\begin{align*}
  \frac{dE_\alpha}{dt}
  &= -\int_{\Omega}\na u^\alpha\cdot\na u^\beta dx 
  = -\frac{4\alpha\beta}{(\alpha+\beta)^2}\int_\Omega |\na u^{(\alpha+\beta)/2}|^2 dx \\
  &\le -\frac{4\alpha\beta}{(\alpha+\beta)^2}
  \left(\frac{\alpha+1}{C_B(p,q)}\right)^{(\alpha+\beta)/(\alpha+1)}
  E_\alpha[u]^{(\alpha+\beta)/(\alpha+1)}.
\end{align*} 
An integration of this inequality gives the assertion.
\end{proof}

Next, we show exponential decay rates.
\begin{theorem}[Exponential decay for $E_\alpha$]\label{thm.E.exp} \
Let $u$ be a smooth solution to \eqref{1.pme}-\eqref{1.bc}, 
$0<\alpha\le 1$, $\beta>0$, $u_0\in L^\infty(\Omega)$ with $\inf_\Omega u_0\geq0$. 
Then
$$
  E_\alpha[u(t)] \le E_\alpha[u_0] e^{-\Lambda t}, \quad t\ge 0.
$$
The constant $\Lambda$ is given by
$$
  \Lambda = \frac{4\alpha\beta}{C_B(\tfrac12(\alpha+1),2)(\alpha+1)}
  \inf_{\Omega}(u_0^{\beta-1})\geq0,
$$
for $\beta>0$ and
$$
  \Lambda = \frac{4\alpha\beta(\alpha+1)}{C_B'(p,q)(\alpha+\beta)^2}\,\|u_0\|_{L^1(\Omega)}^{\beta-1},
$$
for $\beta>1$. Here, $C_B(\tfrac12(\alpha+1),2)$ and $C_B'(p,q)$ 
are the constants in the Beckner inequalities
\eqref{b.beckner} and \eqref{b.beckner2}, respectively, 
with $p=(\alpha+\beta)/2$ and $q=2(\alpha+1)/(\alpha+\beta)$.
\end{theorem}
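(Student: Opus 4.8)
The plan is to reduce both statements to the linear differential inequality $\frac{dE_\alpha}{dt}\le-\Lambda E_\alpha[u]$, whose integration gives $E_\alpha[u(t)]\le E_\alpha[u_0]e^{-\Lambda t}$ at once. As in the proof of Theorem \ref{thm.E.poly}, the starting point is the dissipation identity obtained by differentiating \eqref{1.E}, using mass conservation ($\int_\Omega u_t\,dx=\int_\Omega\Delta(u^\beta)dx=0$ by the Neumann condition \eqref{1.bc}) and integrating by parts:
$$
  \frac{dE_\alpha}{dt} = -\int_\Omega\na u^\alpha\cdot\na u^\beta dx
  = -\frac{4\alpha\beta}{(\alpha+\beta)^2}\int_\Omega|\na u^{(\alpha+\beta)/2}|^2 dx.
$$
It then remains, in each regime, to bound the right-hand side from below by a multiple of $E_\alpha[u]$; the two stated values of $\Lambda$ correspond to two ways of doing this.

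For the first value ($\beta>0$) I would rewrite the dissipation as $\int_\Omega\na u^\alpha\cdot\na u^\beta dx=\alpha\beta\int_\Omega u^{\beta-1}u^{\alpha-1}|\na u|^2 dx$ and invoke the maximum principle. The key observation is that, regardless of whether $\beta\ge1$ or $\beta<1$, the pointwise bound $u^{\beta-1}\ge\inf_\Omega(u_0^{\beta-1})$ holds uniformly in time: for $\beta\ge1$ the map $s\mapsto s^{\beta-1}$ is nondecreasing and $u\ge\inf_\Omega u_0$, while for $\beta<1$ it is nonincreasing and $u\le\sup_\Omega u_0$, so in both cases the same quantity $\inf_\Omega(u_0^{\beta-1})$ is a lower bound. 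Pulling this factor out and using $\int_\Omega u^{\alpha-1}|\na u|^2 dx=\frac{4}{(\alpha+1)^2}\|\na u^{(\alpha+1)/2}\|_{L^2(\Omega)}^2$, I would apply the usual Beckner inequality \eqref{b.beckner} with $q=2$, $p=\tfrac12(\alpha+1)$ to $f=u^{(\alpha+1)/2}$; the exponents give $f^q=u^{\alpha+1}$, $f^{1/p}=u$, $pq=\alpha+1$, so this reads $(\alpha+1)E_\alpha[u]\le C_B(\tfrac12(\alpha+1),2)\|\na u^{(\alpha+1)/2}\|_{L^2(\Omega)}^2$. Here the hypothesis $0<\alpha\le1$ is exactly what ensures $p=\tfrac12(\alpha+1)\in(\tfrac12,1]$, so that the $q=2$ inequality is admissible. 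Chaining the estimates produces $\frac{dE_\alpha}{dt}\le-\Lambda E_\alpha[u]$ with the first value of $\Lambda$; note $\Lambda$ may vanish when $\inf_\Omega u_0=0$ and $\beta>1$, consistently with the assertion $\Lambda\ge0$.

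For the second value ($\beta>1$) I would instead feed $f=u^{(\alpha+\beta)/2}$ into the sharper inequality \eqref{b.beckner2} with $p=(\alpha+\beta)/2$ and $q=2(\alpha+1)/(\alpha+\beta)$, chosen so that $f^q=u^{\alpha+1}$, $f^{1/p}=u$ and $pq=\alpha+1$, turning the bracket in \eqref{b.beckner2} into $(\alpha+1)E_\alpha[u]$. This yields $\|f\|_{L^q(\Omega)}^{2-q}(\alpha+1)E_\alpha[u]\le C_B'(p,q)\int_\Omega|\na u^{(\alpha+\beta)/2}|^2 dx$. A short computation gives $(2-q)/q=(\beta-1)/(\alpha+1)$, so the prefactor is $\|f\|_{L^q(\Omega)}^{2-q}=\big(\int_\Omega u^{\alpha+1}dx\big)^{(\beta-1)/(\alpha+1)}$. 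Using mass conservation $\int_\Omega u\,dx=\|u_0\|_{L^1(\Omega)}$ together with Jensen's inequality (recall $\m(\Omega)=1$ and $\alpha+1>1$) gives $\int_\Omega u^{\alpha+1}dx\ge\|u_0\|_{L^1(\Omega)}^{\alpha+1}$, and since $\beta>1$ raising to the positive power $(\beta-1)/(\alpha+1)$ preserves the inequality, so $\|f\|_{L^q(\Omega)}^{2-q}\ge\|u_0\|_{L^1(\Omega)}^{\beta-1}$. Inserting this into the dissipation identity gives $\frac{dE_\alpha}{dt}\le-\Lambda E_\alpha[u]$ with the second value of $\Lambda$.

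Each step is short once the exponents are in place, so the only real subtlety—and the main obstacle—is obtaining the correct uniform-in-time lower bound on the dissipation. In the first regime this is the unified maximum-principle sandwich, where one must notice that the single quantity $\inf_\Omega(u_0^{\beta-1})$ simultaneously handles $\beta\ge1$ and $\beta<1$. In the second regime it is the bookkeeping that the Beckner-II prefactor $\|f\|_{L^q(\Omega)}^{2-q}$ carries precisely the power of $\int_\Omega u^{\alpha+1}dx$ that can be controlled below by conserved mass; here the restriction $\beta>1$ is essential, since for $\beta<1$ the exponent $(\beta-1)/(\alpha+1)$ turns negative and Jensen's inequality would point the wrong way, which is exactly why the second constant is stated only for $\beta>1$.
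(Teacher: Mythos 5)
Your proposal is correct and follows essentially the same route as the paper: the same dissipation identity, the same maximum-principle bound $u^{\beta-1}\ge\inf_\Omega(u_0^{\beta-1})$ combined with the $q=2$ Beckner inequality for the first constant, and the same use of Lemma \ref{lem.beckner2} with $f=u^{(\alpha+\beta)/2}$ plus mass conservation and Jensen's inequality to bound $\|u\|_{L^{\alpha+1}(\Omega)}\ge\|u_0\|_{L^1(\Omega)}$ for the second. Your explicit case distinction justifying the uniform lower bound $u^{\beta-1}\ge\inf_\Omega(u_0^{\beta-1})$ for both $\beta\ge1$ and $\beta<1$ is a welcome clarification that the paper leaves implicit.
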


\begin{proof}
Let $\beta>0$. We compute
\begin{align*}
  \frac{dE_\alpha}{dt} 
  &= -\frac{4\alpha\beta}{(\alpha+1)^2}
  \int_\Omega u^{\beta-1}|\na u^{(\alpha+1)/2}|^2 dx \\
  &\le -\frac{4\alpha\beta}{(\alpha+1)^2}
  \inf_{\Omega}(u_0^{\beta-1})
  \int_\Omega|\na u^{(\alpha+1)/2}|^2 dx.
\end{align*}
By the Beckner inequality \eqref{b.beckner} with $p=(\alpha+1)/2$, $q=2$,
and $f=u^{(\alpha+1)/2}$, we find that
$$
  \frac{dE_\alpha}{dt} \le -\frac{4\alpha\beta}{C_B(p,2)(\alpha+1)}
  \inf_{\Omega}(u_0^{\beta-1})E_\alpha,
$$
and Gronwall's lemma proves the claim. The restriction $p\le 1$ in
Lemma \ref{lem.beckner} is equivalent to $\alpha\le 1$.

Next, let $\beta>1$. By Lemma \ref{lem.beckner2}, with $p=(\alpha+\beta)/2$, 
$q=2(\alpha+1)/(\alpha+\beta)$, and $f=u^{(\alpha+\beta)/2}$, it follows that
$$
  \|u\|_{L^{\alpha+1}(\Omega)}^{\beta-1}\left(
  \int_{\Omega} u^{\alpha+1}dx - \left(\int_{\Omega}udx\right)^{\alpha+1}\right)
  \le C_B'(p,q)\int_{\Omega}|\na u^{(\alpha+\beta)/2}|^2 dx.
$$
Hence, we can estimate
\begin{align*}
  \frac{dE_\alpha}{dt} = -\frac{4\alpha\beta}{(\alpha+\beta)^2}
  \int_\Omega|\na u^{(\alpha+\beta)/2}|^2 dx
  &\le -\frac{4\alpha\beta(\alpha+1)}{(\alpha+\beta)^2}\,
    \frac{\|u\|_{L^{\alpha+1}(\Omega)}^{\beta-1}}{C_B'(p,q)}E_\alpha[u]\\
  &\le -\frac{4\alpha\beta(\alpha+1)}{(\alpha+\beta)^2}\,
  \frac{\|u_0\|_{L^1(\Omega)}^{\beta-1}}{C_B'(p,q)}E_\alpha[u],
\end{align*}
and Gronwall's lemma gives the conclusion.
Note that in the last step of the inequality we used that $\|u\|_{L^{\alpha+1}(\Omega)}\geq\|u\|_{L^1(\Omega)}=\|u_0\|_{L^1(\Omega)}$.
\end{proof}


\subsection{First-order entropies}\label{sec.cont.F}

In this section, we consider the diffusion equation \eqref{1.pme} on
the torus $\Omega=\T^d$. 
We prove the exponential decay for the first-order entropies \eqref{1.F}.

\begin{theorem}[Exponential decay of $F_\alpha$ in one space dimension]
\label{thm.F.exp}
Let $u$ be a smooth solution to \eqref{1.pme} on the one-dimensional
torus $\Omega=\T$. Let $u_0\in L^\infty(\Omega)$ with $\inf_\Omega u_0\geq0$ and let $\alpha,\beta>0$ satisfy $-2\le\alpha-2\beta<1$. Then
$$
  F_\alpha[u(t)] \le F_\alpha[u_0]e^{-\Lambda t}, \quad 0\le t\le T,
$$
where 
$$
  \Lambda = \frac{2\beta}{C_P^2}
  \inf_{\Omega}(u_0^{\alpha+\beta-\gamma-1})\inf_{\Omega}(u_0^{\gamma-\alpha})\geq0, \quad
  \gamma = \frac23(\alpha+\beta-1),
$$
where $C_P>0$ is the Poincar\'e constant in \eqref{b.pwi}.
\end{theorem}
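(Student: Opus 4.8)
The plan is to run the entropy--dissipation method directly on $F_\alpha$: differentiate $F_\alpha[u(t)]$ in time, rewrite the dissipation as the integral of a nonnegative quadratic form using one admissible integration by parts (legitimate since $\Omega=\T$ has no boundary), and then bound it below by $\Lambda F_\alpha$ via the Poincar\'e--Wirtinger inequality. First I would record $F_\alpha[u]=\frac{\alpha^2}{8}\int_\T u^{\alpha-2}u_x^2\,dx$ and introduce the logarithmic derivative $\eta=u_x/u$, so that $u_{xx}=u(\eta^2+\eta_x)$. Differentiating in time with $u_t=(u^\beta)_{xx}=\beta u^\beta(\beta\eta^2+\eta_x)$ and integrating by parts once to remove $u_{xt}$, a short computation should collapse the dissipation into the clean quartic form
\begin{equation}\label{plan.diss}
  -\frac{d}{dt}F_\alpha[u] = \frac{\alpha^2\beta}{8}\int_\T u^{\alpha+\beta-1}
  \big(\alpha\beta\,\eta^4 + (\alpha+2\beta)\,\eta^2\eta_x + 2\,\eta_x^2\big)\,dx .
\end{equation}

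The heart of the argument is to make the integrand in \eqref{plan.diss} manifestly nonnegative. On the torus the identity $\int_\T \partial_x\!\big(u^{\alpha+\beta-1}\eta^3\big)\,dx=0$ furnishes the single relation
\[
  \int_\T u^{\alpha+\beta-1}\big((\alpha+\beta-1)\eta^4 + 3\eta^2\eta_x\big)\,dx = 0,
\]
so for every $\mu\in\R$ I may add $\mu$ times this vanishing integral to \eqref{plan.diss} and then complete the square in $(\eta_x,\eta^2)$. I would choose $\mu$ \emph{precisely} so that the square becomes $2\big(\eta_x+\tfrac{\gamma}{2}\eta^2\big)^2$ with $\gamma=\tfrac23(\alpha+\beta-1)$; since $(u^{\gamma/2})_{xx}=\tfrac{\gamma}{2}u^{\gamma/2}\big(\eta_x+\tfrac{\gamma}{2}\eta^2\big)$, this square is exactly a multiple of $u^{-\gamma/2}(u^{\gamma/2})_{xx}^2$. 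The leftover coefficient of $\eta^4$ then evaluates to
\[
  \delta = \tfrac19\big(\alpha-2\beta+2\big)\big(1-(\alpha-2\beta)\big),
\]
and this is where the hypothesis enters: $\delta\ge 0$ if and only if $-2\le\alpha-2\beta\le 1$. Carrying out this square completion with the right weight and verifying that the sign condition for $\delta$ coincides with the stated parameter strip is the main obstacle; everything else is bookkeeping.

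With $\delta\ge 0$ I discard the nonnegative $\delta\,\eta^4$ term and keep only the square. Factoring $u^{\alpha+\beta-1}=u^{\gamma/2}\cdot u^{\gamma}$ and pulling $\inf_\Omega u^{\gamma/2}=\inf_\Omega u^{\alpha+\beta-\gamma-1}$ out of the integral gives $-\frac{d}{dt}F_\alpha[u]\ge \frac{\alpha^2\beta}{\gamma^2}\inf_\Omega(u^{\alpha+\beta-\gamma-1})\int_\T (u^{\gamma/2})_{xx}^2\,dx$. Because $u$ is periodic, $(u^{\gamma/2})_x$ has zero mean, so \eqref{b.pwi} applied to it yields $\int_\T (u^{\gamma/2})_{xx}^2\ge C_P^{-2}\int_\T (u^{\gamma/2})_x^2 = C_P^{-2}\tfrac{\gamma^2}{4}\int_\T u^{\gamma-2}u_x^2$. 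Finally, writing $u^{\gamma-2}u_x^2=u^{\gamma-\alpha}\,u^{\alpha-2}u_x^2\ge \inf_\Omega(u^{\gamma-\alpha})\,u^{\alpha-2}u_x^2$ reintroduces $F_\alpha$ and produces the second infimum factor, so that $-\frac{d}{dt}F_\alpha[u]\ge \Lambda F_\alpha[u]$ with the stated $\Lambda$. Replacing the time-dependent infima of powers of $u(t)$ by the corresponding infima of $u_0$ through the maximum principle and integrating by Gronwall's lemma then gives the assertion.
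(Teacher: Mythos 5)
Your proposal is correct and follows essentially the same route as the paper: the same dissipation identity (your \eqref{plan.diss} is exactly the paper's $-\frac{\alpha^2\beta}{4}\int u^{\alpha+\beta-1}S_0\,dx$ rewritten in the variables $(\eta^2,\eta_x)$ instead of $(\xi_G^2,\xi_L)$), the same single integration-by-parts rule $\int(u^{\alpha+\beta-1}\eta^3)_x\,dx=0$, the same comparison with $\int u^{\alpha+\beta-\gamma-1}(u^{\gamma/2})_{xx}^2\,dx$ for $\gamma=\frac23(\alpha+\beta-1)$, and the same Poincar\'e and maximum-principle steps; your explicit completion of the square with leftover $\delta=\frac19(\alpha-2\beta+2)(1-(\alpha-2\beta))$ is just the paper's choice $\kappa=1$, $a_2=0$, $a_1\ge0$ carried out by hand, and all the constants check out.
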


\begin{proof}
We extend slightly the entropy construction method of \cite{JuMa06}.
The time derivative of the entropy reads as
\begin{align*}
  \frac{dF_\alpha}{dt}
  &= \frac{\alpha}{2}\int_\Omega (u^{\alpha/2})_x(u^{\alpha/2-1}u_t)_x dx
  = -\frac{\alpha}{2}\int_\Omega (u^{\alpha/2})_{xx}u^{\alpha/2-1}(u^\beta)_{xx} dx  \\
  &= -\frac{\alpha^2\beta}{4}\int_\Omega u^{\alpha+\beta-1}
  \left(\left(\frac{\alpha}{2}-1\right)(\beta-1)\xi_G^4
  + \left(\frac{\alpha}{2}+\beta-2\right)\xi_G^2\xi_L + \xi_L^2\right)dx, 
\end{align*}
where we introduced
$$
  \xi_G = \frac{u_x}{u}, \quad \xi_L = \frac{u_{xx}}{u}.
$$
This integral is compared to 
$$
  \int_\Omega u^{\alpha+\beta-\gamma-1}(u^{\gamma/2})_{xx}^2 dx
  = \frac{\gamma^2}{4}\int_\Omega u^{\alpha+\beta-1}\left(
  \left(\frac{\gamma}{2}-1\right)^2\xi_G^4 + (\gamma-2)\xi_G^2\xi_L + \xi_L^2\right)dx,
$$
where, compared to the method of \cite{JuMa06}, 
$\gamma\neq 0$ gives an additional degree of freedom in the calculations.
In the one-dimensional situation, there is only one relevant integration-by-parts rule:
$$
  0 = \int_\Omega(u^{\alpha+\beta-4}u_x^3)_x dx
  = \int_\Omega u^{\alpha+\beta-1}\big((\alpha+\beta-4)\xi_G^4
  + 3\xi_G^2\xi_L\big)dx.
$$
We introduce the polynomials
\begin{align}
  S_0(\xi) &= \left(\frac{\alpha}{2}-1\right)(\beta-1)\xi_G^4
  + \left(\frac{\alpha}{2}+\beta-2\right)\xi_G^2\xi_L + \xi_L^2, \label{f.S0} \\
  D_0(\xi) &= \left(\frac{\gamma}{2}-1\right)^2\xi_G^4 + (\gamma-2)\xi_G^2\xi_L 
  + \xi_L^2, \label{f.D0} \\
  T(\xi) &= (\alpha+\beta-4)\xi_G^4 + 3\xi_G^2\xi_L, \nonumber
\end{align}
where $\xi=(\xi_G,\xi_L)$. We wish to show that there exist numbers $c$, $\gamma\in\R$
($\gamma\neq 0$) and $\kappa>0$ such that
$$
  S(\xi) = S_0(\xi) + cT(\xi) - \kappa D_0(\xi) \ge 0 \quad\mbox{for all }\xi\in\R^2.
$$

The polynomial $S$ can be written as 
$S(\xi)=a_1\xi_G^4+a_2\xi_G^2\xi_L+(1-\kappa)\xi_L^2$, where
\begin{align*}
  a_1 &= -\frac14(\gamma-2)^2\kappa + (\alpha+\beta-4)c + \frac12(\alpha-2)(\beta-1), \\
  a_2 &= -(\gamma-2)\kappa + 3c + \frac12(\alpha+2\beta-4).
\end{align*}
Therefore, the maximal value for $\kappa$ is $\kappa=1$. Let $\kappa=1$.
Then we need to eliminate the mixed term $\xi_G^2\xi_L$. 
The solution of $a_2=0$ is given by $c=-\frac16(\alpha+2\beta-2\gamma)$, which leads to 
$$
  a_1 = -\frac14\left(\gamma-\frac23(\alpha+\beta-1)\right)^2
  - \frac{1}{18}(\alpha-2\beta-1)(\alpha-2\beta+2).
$$
Choosing $\gamma=\frac23(\alpha+\beta-1)$ to maximize $a_1$, 
we find that $a_1\ge 0$ and
hence $S(\xi)\ge 0$ if and only if $-2\le\alpha-2\beta\le 1$.

Using the Poincar\'e inequality \eqref{b.pwi} and the maximum principle, we obtain
\begin{align*}
  \frac{dF_\alpha}{dt} 
  &= -\frac{\alpha^2\beta}{4}\int_\Omega u^{\alpha+\beta-1} S_0(\xi) dx 
  = -\frac{\alpha^2\beta}{4}\int_\Omega u^{\alpha+\beta-1}
  (S_0(\xi) + cT(\xi))dx \\
  &\le -\frac{\alpha^2\beta}{4}\int_\Omega u^{\alpha+\beta-1}D_0(\xi)dx 
  = -\frac{\alpha^2\beta}{\gamma^2}\int_\Omega u^{\alpha+\beta-\gamma-1}
  (u^{\gamma/2})_{xx}^2 dx \\
  &\le -\frac{\alpha^2\beta}{\gamma^2}\inf_{\Omega\times(0,\infty)}
  (u^{\alpha+\beta-\gamma-1})\int_\Omega (u^{\gamma/2})_{xx}^2 dx \\
  &\le -\frac{\alpha^2\beta}{\gamma^2 C_P^2}\inf_{\Omega}
  (u_0^{\alpha+\beta-\gamma-1})\int_\Omega (u^{\gamma/2})_x^2 dx \\
  &\le -\frac{2\beta}{ C_P^2}\inf_{\Omega}
  (u_0^{\alpha+\beta-\gamma-1})\inf_{\Omega}(u_0^{\gamma-\alpha})
  F_\alpha.
\end{align*}
For the last inequality, we use that $(u^{\gamma/2})_x=\frac{\gamma}{\alpha} u^{(\gamma-\alpha)/2} (u^{\alpha/2})_x$, which cancels out the ratio ${\alpha^2}/{\gamma^2}$.
An application of the Gronwall's lemma finishes the proof. 
\end{proof}

We turn to the multi-dimensional case.

\begin{theorem}[Exponential decay of $F_\alpha$ in several space dimensions]\
\label{thm.F.exp3}
Let $u$ be a smooth solution to \eqref{1.pme} on the torus $\Omega=\T^d$. 
Let $u_0\in L^\infty(\Omega)$ with $\inf_\Omega u_0>0$ and let
\begin{align*}
  (\alpha,\beta)\in M_d = \big\{(\alpha,\beta)\in\R^2:&\
  (2-2\alpha+2\beta-d+\alpha d)(4-4\beta-2d+\alpha d+2\beta+2\beta d)>0 \\
  &\mbox{and }(\alpha-2\beta-1)(\alpha-2\beta+2)<0\big\}
\end{align*}
(see Figure \ref{fig.ab}). Then there exists $\Lambda>0$, depending on
$\alpha$, $\beta$, $d$, $u_0$, and $\Omega$ such that
$$
  F_\alpha[u(t)] \le F_\alpha[u_0]e^{-\Lambda t}, \quad t\ge 0.
$$
\end{theorem}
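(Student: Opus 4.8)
The plan is to carry over the systematic integration-by-parts method of \cite{JuMa06}, already used in the proof of Theorem \ref{thm.F.exp}, from one to $d$ space dimensions. First I would differentiate $F_\alpha$ along the flow \eqref{1.pme} and integrate by parts repeatedly; since $\Omega=\T^d$ has no boundary, no boundary terms appear, and one arrives at
$$
  \frac{dF_\alpha}{dt} = -c_0\int_{\T^d} u^{\alpha+\beta-1}\,S_0(\xi)\,dx,
$$
where $c_0>0$ and $S_0$ is a quadratic form in the shift-invariant second-order variables built from $\na u/u$ and the Hessian $D^2u/u$. The natural variables are the entries $\pa_i u\,\pa_j u/u^2$ and $\pa_{ij}u/u$; to keep the algebra manageable I would, as is standard in the multi-dimensional version of the method, decompose the Hessian into its trace part $\Delta u/u$ and its trace-free part, so that the scalar quantities $|\na u|^2/u^2$, $\Delta u/u$, and the squared norm of the deviatoric Hessian enter as essentially independent building blocks.

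Next I would introduce the free parameters of the method. On the one hand, each integration-by-parts identity $0=\int_{\T^d}\diver(\cdots)\,dx$ contributes a polynomial $T_k(\xi)$ that integrates against $u^{\alpha+\beta-1}$ to zero; in $d$ dimensions there are several such rules, in contrast to the single rule available in the one-dimensional case. On the other hand, the comparison integral
$$
  \int_{\T^d} u^{\alpha+\beta-\gamma-1}\,\big|D^2 u^{\gamma/2}\big|^2\,dx = c_1\int_{\T^d} u^{\alpha+\beta-1}\,D_0(\xi)\,dx
$$
introduces a second quadratic form $D_0$ and a free exponent $\gamma\neq0$. The goal is to choose the coefficients $c_k$, the exponent $\gamma$, and a factor $\kappa>0$ so that
$$
  S(\xi) = S_0(\xi) + \sum_k c_k T_k(\xi) - \kappa D_0(\xi) \ge 0
$$
pointwise in the admissible second-order configurations. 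Once this is achieved, dropping the nonnegative term, pulling out the weight $u^{\alpha+\beta-\gamma-1}$ by the maximum principle (here the hypothesis $\inf_\Omega u_0>0$ is used), and applying the Poincar\'e-Wirtinger inequality \eqref{b.pwi} componentwise to $\na u^{\gamma/2}$ together with the identity $\na u^{\gamma/2}=(\gamma/\alpha)u^{(\gamma-\alpha)/2}\na u^{\alpha/2}$, yields $dF_\alpha/dt\le -\Lambda F_\alpha$, and Gronwall's lemma finishes the proof exactly as in one dimension.

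The hard part is the semidefiniteness step: showing that the free parameters can be tuned so that $S$ is a nonnegative quadratic form, and identifying the resulting feasibility condition with the region $M_d$. Using the trace/trace-free splitting, $S$ should reduce to a contribution from the deviatoric Hessian and a block in the scalar quantities $|\na u|^2/u^2$ and $\Delta u/u$; after eliminating the mixed terms through the coefficients $c_k$ and optimizing over $\gamma$ and $\kappa$, the pointwise nonnegativity of $S$ is expected to reduce to exactly two sign conditions. I anticipate that the dimension-independent factor $(\alpha-2\beta-1)(\alpha-2\beta+2)<0$ is the direct analogue of the one-dimensional constraint, while the dimension-dependent factor $(2-2\alpha+2\beta-d+\alpha d)(4-4\beta-2d+\alpha d+2\beta+2\beta d)>0$ arises from the genuinely multi-dimensional couplings between the gradient norm and the Hessian trace. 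Verifying that these two conditions together are equivalent to $(\alpha,\beta)\in M_d$, and that they are simultaneously achievable with a strictly positive $\kappa$ (which is what produces $\Lambda>0$), is the main computational obstacle; the concluding Poincar\'e-Wirtinger and Gronwall steps are then routine.
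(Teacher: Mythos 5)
Your outline follows the paper's proof of Theorem \ref{thm.F.exp3} essentially step by step: the same starting identity \eqref{f.aux}, the same comparison integral (the paper writes it with $(\Delta(u^{\gamma/2}))^2$ rather than $\|\na^2 u^{\gamma/2}\|^2$, but the two agree after integration on the torus via $\int_\Omega(\Delta f)^2dx=\int_\Omega\|\na^2 f\|^2dx$, an identity the paper invokes at the end), the same two divergence identities $T_1,T_2$, the same trace/trace-free reduction of the Hessian (implemented in the paper through the pointwise inequality $\|\na^2u\|^2\ge\tfrac1d(\Delta u)^2+\tfrac{d}{d-1}\bigl(\na u^\top\na^2u\,\na u/|\na u|^2-\Delta u/d\bigr)^2$ from \cite{JuMa08}, which introduces the auxiliary variables $\xi_S$ and $\xi_R$), and the same concluding Poincar\'e--Wirtinger and Gronwall steps. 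So the strategy is the right one.

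The gap is that the decisive step is only anticipated, not carried out: one must actually prove that for every $(\alpha,\beta)\in M_d$ there exist $c_1,c_2\in\R$, $\gamma\ne0$ and $\kappa>0$ making $S=S_0+c_1T_1+c_2T_2-\kappa D_0$ pointwise nonnegative, and this feasibility statement is precisely where the content of the theorem (the explicit definition of $M_d$) resides. Moreover, the elimination strategy you sketch --- kill the mixed terms with the coefficients $c_k$, then optimize over $\gamma$ and $\kappa$, expecting feasibility to ``reduce to exactly two sign conditions'' --- provably does not recover all of $M_d$: the paper shows in Remark \ref{rem.F} that imposing the natural by-hand normalization $a_2=a_3=0$ yields a strictly smaller admissible region (Figure \ref{fig.ab2} versus Figure \ref{fig.ab}). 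To obtain the full set $M_d$ the paper performs real quantifier elimination on the two-variable quadratic decision problem \eqref{pos2} with a computer algebra system. So to complete your argument you must either accept the smaller parameter region of Remark \ref{rem.F} or supply the full quantifier elimination; as written, the claimed equivalence of your two anticipated sign conditions with $(\alpha,\beta)\in M_d$ is unsubstantiated.
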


\begin{figure}[htp]
  \includegraphics[width=120mm]{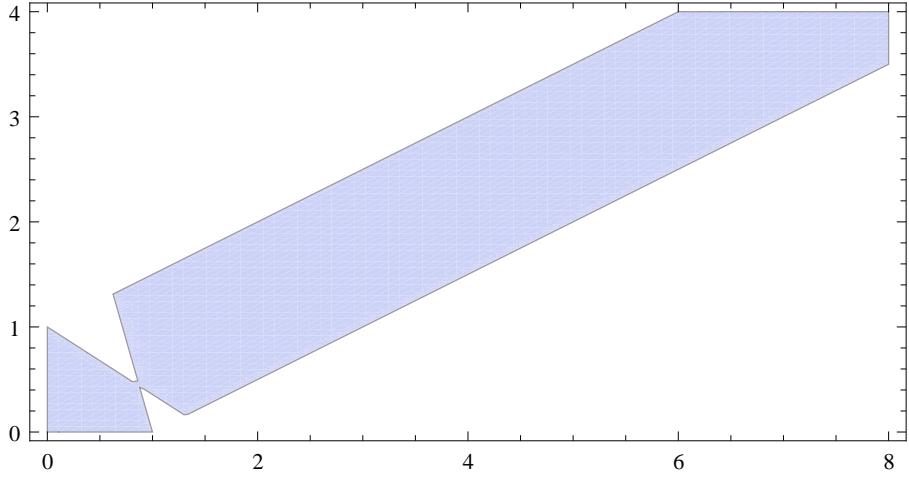}
  \caption{Illustration of the set $M_d$, defined in Theorem \ref{thm.F.exp3},
  for $d=9$.}
  \label{fig.ab}
\end{figure}

\begin{proof}
The time derivative of the first-order entropy becomes
\begin{equation}\label{f.aux}
  \frac{dF_\alpha}{dt} = -\frac{\alpha}{2}\int_\Omega u^{\alpha/2-1}
  \Delta(u^{\alpha/2})\Delta(u^\beta)dx \\
  = -\frac{\alpha^2\beta}{4}\int_\Omega u^{\alpha+\beta-1}S_0 dx,
\end{equation}
where $S_0$ is defined in \eqref{f.S0} with the (scalar) variables
$\xi_G=|\na u|/u$, $\xi_L=\Delta u/u$. We compare this integral to
$$
  \int_\Omega u^{\alpha+\beta-\gamma-1}(\Delta(u^{\gamma/2}))^2 dx
  = \frac{\gamma^2}{4}\int_\Omega u^{\alpha+\beta-1}D_0 dx,
$$
where $D_0$ is as in \eqref{f.D0} and $\gamma\neq 0$. 
In contrast to the one-dimensional case,
we employ two integration-by-parts rules:
\begin{align*}
  0 &= \int_\Omega\diver\big(u^{\alpha+\beta-4}|\na u|^2\na u\big)dx 
  = \int_\Omega u^{\alpha+\beta-1}T_1 dx, \\
  0 &= \int_\Omega\diver\big(u^{\alpha+\beta-3}(\na^2 u-\Delta{\mathbb I})
  \cdot\na u\big) dx = \int_\Omega u^{\alpha+\beta-1}T_2 dx,
\end{align*}
where
\begin{align*}
  T_1 &= (\alpha+\beta-4)\xi_G^4 + 2\xi_{GHG} + \xi_G^2\xi_L, \\
  T_2 &= (\alpha+\beta-3)\xi_{GHG} - (\alpha+\beta-3)\xi_G^2\xi_L
  + \xi_{H}^2 - \xi_L^2,
\end{align*}
and $\xi_{GHG}=u^{-3}\na u^\top\na^2 u\na u$, $\xi_{H}=u^{-1}\|\na^2 u\|$. Here, $\|\na^2 u\|$ denotes the Frobenius norm of the hessian.\\
In order to compare $\na^2 u$ and $\Delta u$, we employ Lemma 2.1 of \cite{JuMa08}:
$$
  \|\na^2 u\|^2 \ge \frac{1}{d}(\Delta u)^2 
  + \frac{d}{d-1}\left(\frac{\na u^\top\na^2 u\na u}{|\na u|^2} - \frac{\Delta u}{d}
  \right)^2.
$$
Therefore, there exists $\xi_R\in\R$ such that
$$
  \xi_{H}^2=\frac{\xi_L^2}{d} + \frac{d}{d-1}
  \left(\frac{\xi_{GHG}}{\xi_G^{2}}-\frac{1}{d}\xi_L\right)^2+\xi_R^2
  = \frac{\xi_L^2}{d} + \frac{d}{d-1}\xi_S^2 + \xi_R^2,
$$
where we introduced $\xi_S=\xi_{GHG}/\xi_G^2-\xi_L/d$.
Rewriting the polynomials $T_1$ and $T_2$ in terms of 
$\xi=(\xi_G,\xi_L,\xi_S,$ $\xi_R)\in\R^4$ leads to:
\begin{align*}
  T_1(\xi) &= (\alpha+\beta-4)\xi_G^4 + \frac{2+d}{d}\xi_G^2\xi_L
  + 2\xi_G^2\xi_S, \\
  T_2(\xi) &= \frac{1-d}{d}(\alpha+\beta-3)\xi_G^2\xi_L+\frac{1-d}{d}\xi_L^2
  +\xi_S\xi_G^2(\alpha+\beta-3)+\frac{d}{d-1}\xi_S^2+\xi_R^2.
\end{align*}
We wish to find $c_1$, $c_2$, $\gamma\in\R$ ($\gamma\neq 0$) and $\kappa>0$ such that
\begin{equation*}
  S(\xi) = S_0(\xi) + c_1T_1(\xi) + c_2T_2(\xi) - \kappa D_0(\xi)\ge 0
  \quad\mbox{for all }\xi\in\R^{4}.
\end{equation*}
The polynomial $S$ can be written as
\begin{align*}
  S(\xi) &= a_1\xi_G^4 + a_2\xi_G^2\xi_L + a_3\xi_L^2 + a_4\xi_G^2\xi_S
  + a_5\xi_S^2 + c_2\xi_R^2, \mbox{ where} \\
  a_1 &= \left(\frac{\alpha}{2}-1\right)(\beta-1) + (\alpha+\beta-4)c_1
  - \left(\frac{\gamma}{2}-1\right)^2\kappa, \\
  a_2 &= \frac{\alpha}{2}+\beta-2 + \left(\frac{2}{d}+1\right)c_1 
  - (\alpha+\beta-3)\frac{d-1}{d}c_2 - (\gamma-2)\kappa, \\
  a_3 &= 1+\frac{1-d}{d}c_2-\kappa, \\
  a_4 &= 2c_1 + (\alpha+\beta-3)c_2, \\
  a_5 &= \frac{d}{d-1}c_2.
\end{align*}
We remove the variable $\xi_R$ by requiring that
$c_2\ge 0$. The remaining polyomial can be reduced to a quadratic
polynomial by setting $x=\xi_L/\xi_G^2$ and $y=\xi_S/\xi_G^2$:
\begin{equation}\label{pos2}
  S(x,y) \ge a_1 + a_2 x + a_3 x^2 + a_4 y + a_5 y^2 \ge 0\quad\mbox{for all }
  x,\,y\in\R.
\end{equation}
This quadratic decision problem can be solved by employing the computer
algebra system {\tt Mathematica}. The result of the command
\begin{quote}
  \begin{verbatim} 
  Resolve[ForAll[{x, y}, Exists[{C1, C2, kappa, gamma}, 
  a1 + a2*x + a3*x^2 + a4*y + a5*y^2 >= 0 && kappa > 0 
  && gamma != 0]], Reals]
  \end{verbatim}
\end{quote}
\vskip-4mm
gives all $(\alpha,\beta)\in\R^2$ such that there exist $c_1$, $c_2$,
$\gamma\in\R$ ($\gamma\neq 0$) and $\kappa>0$ such that \eqref{pos2} holds.
This region equals the set $M_d$, defined in the statement of the theorem.

Similar to the one-dimensional case, we infer that
$$
  \frac{dF_\alpha}{dt} 
  \le -\frac{\alpha^2\beta\kappa}{4}\int_\Omega u^{\alpha+\beta-1}D_0(\xi)dx
  = -\frac{\alpha^2\beta\kappa}{\gamma^2}\int_\Omega u^{\alpha+\beta-\gamma-1}
  (\Delta u^{\gamma/2})^2 dx. 
$$
Thus, proceeding as in the proof of Theorem \ref{thm.F.exp} and using
the identity
$$
  \int_\Omega (\Delta f)^2 dx = \int_\Omega \|\na^2 f\|^2 dx
$$
for smooth functions $f$ (which can be obtained by integration by parts twice), we obtain
$$
  \frac{dF_\alpha}{dt}
  \le -\frac{2\beta\kappa}{C_P^2}
  \inf_\Omega(u_0^{\alpha+\beta-\gamma-1})
  \inf_\Omega(u_0^{\gamma-\alpha})F_\alpha.
$$
Gronwall's lemma concludes the proof.
\end{proof}

\begin{remark}\label{rem.F}\rm
Under the additional constraints $a_2=a_3=0$, we are able to solve the
decision problem \eqref{pos2} without the help of the computer algebra system.
The solution set, however, is slightly smaller than $M_d$ which is obtained from
{\tt Mathematica} without these constraints. Indeed, we can compute
$c_1$ and $c_2$ from the equations $a_2=a_3=0$ giving
$$
  c_1 =  \frac{d}{d+2}\left(\frac{\alpha}{2} - 1 + \kappa(1+\gamma-\alpha-\beta)\right),
  \quad c_2 = \frac{d(1 - \kappa)}{d-1}.
$$
The decision problem \eqref{pos2} reduces to
$$
  a_1 + a_4 y + a_5 y^2 \ge 0\quad\mbox{for all }y\in\R.
$$
If $\kappa<1$, it holds $c_2>0$ and consequently, $a_5>0$. Therefore,
the above polynomial is nonnegative for all $y\in\R$ if it has no real roots, 
i.e., if 
$$
  0 \le 4a_1a_5-a_4^2 = q_0 + q_1\gamma + q_2\gamma^2
$$
for some $\gamma\neq 0$, where (for $d>1$)
$$
  q_2 = -\frac{d^2\kappa}{(d+2)^2(d-1)^2}\big(3d(d-4)\kappa + (d+2)^2\big) < 0,
$$
and $q_0$, $q_1$ are polynomials depending on $d$, $\alpha$, $\beta$, and $\kappa$.
The above problem is solvable if and only if there exist real roots, i.e.\ if
$$
  0 \le q_1^2-4q_0q_2 
  = \frac{4\kappa(1-\kappa)}{(d+2)^2(d-1)^2}(s_0+s_1\kappa+s_2\kappa^2),
$$
where
\begin{align*}
  s_0 &= -d(5d-8) + 6d(d-1)\alpha + 2d(d+2)\beta + 2(d+2)\alpha\beta
  - (2d^2+1)\alpha^2 - (d+2)^2\beta^2, \\
  s_1 &= 2d(3d-4) - 2d(4d-3)\alpha - 4d(d+1)\beta + 2d(3d-5)\alpha\beta
  + 2d(d + 1)\alpha^2 \\
  &\phantom{xx}{}- 2d(d-6)\beta^2, \\
  s_2 &= -d^2(\alpha+\beta-1)^2.
\end{align*}
We set $f(\kappa)=s_0+s_1\kappa+s_2\kappa^2$. We have to find $0<\kappa<1$ such
that $f(\kappa)\ge 0$. Since $s_2\le 0$, this is possible if $f(\kappa)$
possesses two (not necessarily distinct) real roots $\kappa_0$ and $\kappa_1$
and if at least one of these roots is between zero and one. 
Since $f(1)=-(d-1)^2(\alpha-2\beta)^2\le 0$, there are only two
possibilities for $\kappa_0$ and $\kappa_1$: either
$\kappa_0\le 0\le \kappa_1\le 1$ or $0\le\kappa_0\le\kappa_1\le 1$.
The first case holds if $f(0)=s_0\ge 0$, the second one if
\begin{align}
  f'(0) &= s_1 \ge 0, \quad f'(1) = s_1+2s_2 \le 0, \label{f1} \\
  s_1^2-4s_0s_2 &= -4d^2(\alpha-2\beta+2)(\alpha-2\beta-1)(4-2d+d\alpha+2d\beta) 
  \label{f2} \\
  &\phantom{xx}{}\times(2-d+(d-2)\alpha+2\beta) \ge 0. \nonumber
\end{align}
The set of all $(\alpha,\beta)\in\R^2$ fulfilling these conditions
is illustrated in Figure \ref{fig.ab2}. 
\qed
\end{remark}

\begin{figure}[htp]
  \includegraphics[width=120mm]{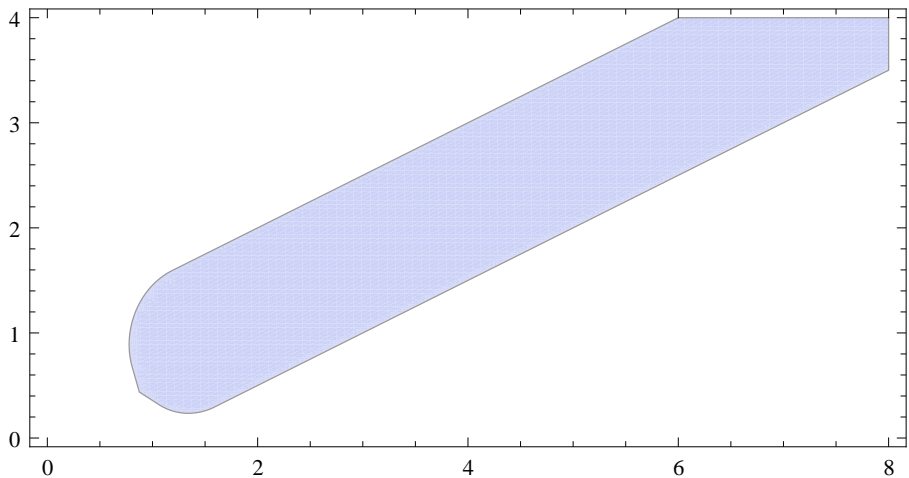}
  \caption{Set of all $(\alpha,\beta)$ fulfilling $s_0\ge 0$, \eqref{f1}, 
  and \eqref{f2} for $d=9$.}
  \label{fig.ab2}
\end{figure}


\section{The discrete case}\label{sec.disc}

We introduce the finite-volume scheme and prove discrete versions
of the generalized Beckner inequality as well as the discrete decay rates.

\subsection{Notations and finite-volume scheme}\label{sec.not}

Let $\Omega$ be an open bounded polyhedral subset of $\R^d$ ($d\ge 2$) with
Lipschitz boundary and $\mbox{m}(\Omega)=1$. 
An admissible mesh of $\Omega$ is given by a family
$\TT$ of control volumes (open and convex polyhedral subsets of $\Omega$ with
positive measure);
a family $\E$ of relatively open parts of hyperplanes in $\R^d$ which represent
the faces of the control volumes; 
and a family of points $(x_K)_{K\in\TT}$ which satisfy Definition 9.1 in
\cite{EGH00}. This definition implies that the straight line between two
neighboring centers of cells $(x_K,x_L)$ is orthogonal to the edge $\sigma=K|L$
between the two control volume $K$ and $L$.
For instance, triangular meshes in $\R^2$ satisfy the admissibility condition 
if all angles of the triangles are smaller than $\pi/2$ \cite[Examples 9.1]{EGH00}.
Voronoi meshes in $\R^d$ are also admissible meshes \cite[Examples 9.2]{EGH00}.

We distinguish the interior faces $\sigma\in\E_{\rm int}$ and the boundary faces
$\sigma\in\E_{\rm ext}$. Then the union $\E_{\rm int}\cup\E_{\rm ext}$ equals
the set of all faces $\E$.
For a control volume $K\in\TT$, we denote by $\E_K$ the set
of its faces, by $\E_{{\rm int},K}$ the set of its interior faces, and by
$\E_{{\rm ext},K}$ the set of edges of $K$ included in $\pa\Omega$.

Furthermore, we denote by $\dd$ the distance in $\R^d$. We assume that the family of meshes satisfies the
following regularity requirement: There exists $\xi>0$ such that for all
$K\in\TT$ and all $\sigma\in\E_{{\rm int},K}$ with $\sigma=K|L$, it holds
\begin{equation}\label{2.ass}
  \dd(x_K,\sigma)\ge \xi \dd(x_K,x_L).
\end{equation}
This hypothesis is needed to apply a discrete Poincar\'e inequality; see Lemma
\ref{lem.dpwi}. Introducing for $\sigma\in\E$ the notation
$$
  d_\sigma = \left\{\begin{array}{ll}
  \dd(x_K,x_L) &\quad\mbox{if }\sigma\in\E_{\rm int},\ \sigma=K|L, \\
  \dd(x_K,\sigma) &\quad\mbox{if }\sigma\in\E_{{\rm ext},K},
  \end{array}\right.
$$
we define the transmissibility coefficient 
$$
  \tau_\sigma = \frac{\m(\sigma)}{d_\sigma}, \quad \sigma\in\E.
$$
The size of the mesh is denoted by $\triangle x = \max_{K\in\TT}\text{diam}(K)$.
Let $T>0$ be some final time and $M_T$ the number of time steps. Then the time step size and the time points are given by, respectively,
$\triangle t = \frac{T}{M_T}$, $t^k = k\triangle t$ for $0\le k\le M_T$.
We denote by $\D$ an admissible space-time discretization of $\Omega_{T}=\Omega \times (0,T)$ composed of an admissible mesh $\TT$ of $\Omega$ and the values $\triangle t$ and $M_{T}$.\\
A classical finite volume scheme provides an approximate solution which is constant on each cell of the mesh and on each time interval. Let $X(\TT)$ be the linear space of functions $\Omega\to\R$ which are constant on each cell $K\in\TT$. We define on $X(\TT)$ the discrete $L^p$ norm, discrete $W^{1,p}$ seminorm, and discrete $W^{1,p}$ norm by, respectively,
\begin{align*}
  \|u\|_{0,p,\TT} 
  &= \left(\int_\Omega|u|^p dx\right)^{1/p} 
  = \left(\sum_{K\in\TT}\m(K)|u_K|^p\right)^{1/p}, \\ 
  |u|_{1,p,\TT} &= \bigg(\sum_{\substack{\sigma\in\E_{\rm int},\\ \sigma=K|L}}
  \frac{\m(\sigma)}{d_\sigma^{p-1}}|u_K-u_L|^p\bigg)^{1/p}, \\
  \|u\|_{1,p,\TT} &= \|u\|_{0,p,\TT} + |u|_{1,p,\TT},
\end{align*}
where $u\in X(\TT)$, $u=u_K$ in $K\in\TT$, and $1\le p<\infty$. 
The discrete entropies for $u\in X(\TT)$
are defined analogously to the continuous case:
\begin{align}
  E_\alpha^d[u] &= \frac {1}{\alpha+1}\left(\sum_{K\in\TT}\m(K)u^{\alpha+1}_K
  - \left(\sum_{K\in\TT}\m(K)u_K\right)^{\alpha+1}\right), \label{2.Ed} \\
  F_\alpha^d[u] &= \frac12|u^{\alpha/2}|_{1,2,\TT}^2. \label{2.Fd}
\end{align}

We are now in the position to define the finite-volume scheme of 
\eqref{1.pme}-\eqref{1.bc}. Let $\D$ be a finite-volume discretization
of $\Omega_T$. The initial datum is approximated by its $L^2$ projection on 
control volumes:
\begin{equation}\label{2.n0}
  u^0 = \sum_{K\in\T}u_K^0\mathbf{1}_K, \quad\mbox{where }
  u^0_K = \frac{1}{\m(K)}\int_K u_0(x)dx,
\end{equation}
and $\mathbf{1}_K$ is the characteristic function on $K$. 
Then $\|u^0\|_{0,1,\TT}=\|u_0\|_{L^1(\Omega)}$.
The numerical scheme reads as follows:
\begin{equation}\label{2.fv}
  \m(K)\frac{u_K^{k+1} - u_K^{k}}{\triangle t}
  + \sum_{\substack{\sigma\in\E_{\rm int},\\ \sigma=K|L}}\tau_\sigma
  \big((u_K^{k+1})^\beta-(u_L^{k+1})^\beta\big) = 0,
\end{equation}
for all $K\in\TT$ and $k=0,\ldots,M_T-1$. 
This scheme is based on a fully implicit Euler discretization in time and a 
finite-volume approach for the volume variable. The Neumann boundary conditions \eqref{1.bc} are taken into account as the sum in \eqref{2.fv} applies only on 
the interior edges. The implicit scheme allows us to establish 
discrete entropy-dissipation estimates which would not be possible with an explicit 
scheme.

We summarize in the next proposition the classical results of existence, uniqueness and stability of the solution to the finite volume scheme \eqref{2.n0}-\eqref{2.fv}.

\begin{proposition}
Let $u_0\in L^{\infty}(\Omega)$, $m\geq 0$, $M\geq 0$ such that $m\leq u_0\leq M$ 
in $\Omega$.  Let $\TT$ be an admissible mesh of $\Omega$. Then the scheme \eqref{2.n0}-\eqref{2.fv} admits a unique solution $(u_K^k)_{K\in\TT,\, 0\leq k\leq M_T}$ satisfying
\begin{align*}
  m\leq u_K^k\leq M,&\quad \mbox{for all }\;K\in\TT,\, 0\leq k\leq M_T,  \\
  \sum_{K\in\TT}{\rm m}(K) u_K^k=\Vert u_0\Vert_{L^1(\Omega)},&\quad\mbox{for all }\; 
  0\leq k\leq M_T. 
\end{align*}
\end{proposition}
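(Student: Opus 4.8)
The plan is to establish the three assertions of the proposition---existence, uniqueness, and the two stated bounds---by treating the fully implicit scheme \eqref{2.fv} as a nonlinear system to be solved at each time step $k\to k+1$, proceeding by induction on $k$. The base case is the initial datum: by definition \eqref{2.n0}, each $u_K^0$ is the average of $u_0$ over $K$, so $m\le u_K^0\le M$ follows immediately from $m\le u_0\le M$, and $\sum_K \m(K)u_K^0 = \int_\Omega u_0\,dx = \|u_0\|_{L^1(\Omega)}$ by the partition of $\Omega$ into the control volumes. For the inductive step, I would assume the solution $(u_K^k)_K$ exists and satisfies $m\le u_K^k\le M$, and then construct $(u_K^{k+1})_K$.

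The heart of the argument is to show that, given $(u_K^k)_K$, the system \eqref{2.fv} viewed as equations for the unknowns $(u_K^{k+1})_K$ has a unique solution in the admissible range $[m,M]^{\#\TT}$. First I would prove a discrete maximum principle: if a solution exists, then $m\le u_K^{k+1}\le M$ for all $K$. The standard technique is to test the scheme at the cell $K^\ast$ where $u^{k+1}$ attains its maximum; since $\tau_\sigma>0$ and $x\mapsto x^\beta$ is increasing for $x>0$ (the monotonicity of the nonlinearity is what makes this work), every flux term $\tau_\sigma\big((u_{K^\ast}^{k+1})^\beta-(u_L^{k+1})^\beta\big)$ is nonnegative, forcing $u_{K^\ast}^{k+1}\le u_{K^\ast}^k\le M$; the argument at the minimizing cell is symmetric and yields $u_{K^\ast}^{k+1}\ge m\ge0$. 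This simultaneously confirms positivity, so $x^\beta$ is evaluated only on $[m,M]$ where it is smooth and monotone. The conservation identity $\sum_K\m(K)u_K^{k+1}=\|u_0\|_{L^1}$ then follows by summing \eqref{2.fv} over all $K\in\TT$: the interior flux contributions cancel pairwise because each interior edge $\sigma=K|L$ appears once with sign $+$ in the equation for $K$ and once with sign $-$ in the equation for $L$, leaving $\sum_K\m(K)(u_K^{k+1}-u_K^k)=0$, which propagates the mass from step $k$.

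For existence and uniqueness of the solution at step $k+1$, I would observe that \eqref{2.fv} is the Euler--Lagrange system of a strictly convex functional. Concretely, define $\Phi\big((v_K)_K\big)=\sum_K\m(K)\big(\tfrac12 v_K^2 - u_K^k v_K\big)/\triangle t + \tfrac12\sum_{\sigma=K|L}\tau_\sigma\,\psi(v_K,v_L)$, or more cleanly work with the primitive of $x\mapsto x^\beta$; the key structural point is that the map $(v_K)_K\mapsto\big(\text{left-hand sides of \eqref{2.fv}}\big)_K$ is the gradient of a coercive, strictly convex function on the relevant domain, so it is a monotone operator. Uniqueness then follows from strict monotonicity: subtracting the equations for two candidate solutions and testing with their difference gives a sum of nonnegative terms that vanishes only when the solutions coincide, again using that $x\mapsto x^\beta$ is strictly increasing. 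For existence I would invoke a topological degree or Brouwer fixed-point argument on the compact convex set $[m,M]^{\#\TT}$ (which is invariant by the maximum principle just established), or equivalently minimize the convex functional over this compact set.

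The main obstacle is the nonlinearity in the flux: unlike a linear scheme, one cannot simply invert a matrix, so existence requires a fixed-point or variational argument, and one must be careful that the degeneracy of $x^\beta$ at $x=0$ (when $\beta>1$) or its singular derivative (when $\beta<1$) does not spoil the convexity/monotonicity estimates. This is precisely why the maximum principle must be secured \emph{first}: confining the iterates to $[m,M]$ with $m$ possibly zero but the nonlinearity still monotone there lets me work on a compact set where $x\mapsto x^\beta$ is continuous and increasing, sidestepping the degeneracy. Since the proposition is stated as summarizing \emph{classical} results, I expect the authors to cite the standard finite-volume reference \cite{EGH00} for the detailed fixed-point construction rather than reproduce it, so my proof plan would similarly compress the existence step to the convexity/monotonicity observation plus a citation, and spend the written detail on the maximum principle and the conservation identity, which are the two quantitatively explicit conclusions.
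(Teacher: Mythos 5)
The paper does not actually prove this proposition: it is stated as a summary of classical results and the authors simply refer to \cite{EGH00} and \cite{EGHM02}. Your sketch is essentially the standard argument from those references (induction in $k$, maximum principle by testing at extremal cells, mass conservation by pairwise cancellation of interior fluxes, existence by a fixed-point or degree argument, uniqueness by monotonicity), and your final paragraph correctly anticipates that the written proof would be compressed to a citation. So the overall route is sound and consistent with what the paper intends.

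Two details in your sketch need repair before it would stand as a proof. First, the map $(v_K)_K\mapsto\bigl(\m(K)(v_K-u_K^k)/\triangle t+\sum_{\sigma=K|L}\tau_\sigma(v_K^\beta-v_L^\beta)\bigr)_K$ is \emph{not} the gradient of a functional in the variables $v_K$: its Jacobian has off-diagonal entries $-\tau_\sigma\beta v_L^{\beta-1}$ in row $K$ and $-\tau_\sigma\beta v_K^{\beta-1}$ in row $L$, which are not symmetric unless $\beta=1$. The variational (and monotone) structure appears only after the substitution $w_K=v_K^\beta$, where the system becomes $\m(K)(w_K^{1/\beta}-u_K^k)/\triangle t+\sum_\sigma\tau_\sigma(w_K-w_L)=0$, the gradient of a strictly convex coercive functional in $w$. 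Relatedly, for uniqueness, testing the difference of two candidate solutions with $v_K-v_K'$ does not yield a sign: the edge terms $\bigl((v_K^\beta-(v_K')^\beta)-(v_L^\beta-(v_L')^\beta)\bigr)\bigl((v_K-v_K')-(v_L-v_L')\bigr)$ can be negative. The correct test function is $v_K^\beta-(v_K')^\beta$, which makes the accumulation term nonnegative (since $(a-b)(a^\beta-b^\beta)\ge0$) and the edge terms perfect squares. Second, your maximum-principle-first strategy has a mild circularity: to run the argument at the maximizing cell you need $x\mapsto x^\beta$ to be defined and monotone on the range of the unknowns, which is not guaranteed before you know they are nonnegative (and for non-integer $\beta$ the map is not even defined for negative arguments). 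The standard fix is to replace $x^\beta$ by a monotone extension to all of $\R$, prove existence and the $L^\infty$ bounds for the modified scheme, and then observe that the solution, lying in $[m,M]$, solves the original one. These are routine repairs, but as written the convexity claim and the uniqueness test are not quite correct.
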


We refer, for instance, to \cite{EGH00} and \cite{EGHM02} for the proof of this proposition.


\subsection{Discrete generalized Beckner inequalities}\label{sec.disc.bec}

The decay properties rely on discrete generalized Beckner inequalities which
follow from the discrete Poincar\'e-Wirtinger inequality \cite[Theorem~5]{BCF12}:

\begin{lemma}[Discrete Poincar\'e-Wirtinger inequality]\label{lem.dpwi}
Let $\Omega\subset\R^d$ be an open bounded polyhedral set and let ${\mathcal T}$ be an
admissible mesh satisfying the regularity constraint \eqref{2.ass}.
Then there exists a constant $C_p>0$ only
depending on $d$ and $\Omega$ such that for all $f\in X({\mathcal T})$,
\begin{equation}\label{d.poincare}
  \|f - \bar f\|_{0,2,\TT} \le \frac{C_p}{\xi^{1/2}}|f|_{1,2,\TT}
\end{equation}
where $\bar f=\int_\Omega fdx$ (recall that $\mbox{m}(\Omega)=1$)
and $\xi$ is defined in \eqref{2.ass}.
\end{lemma}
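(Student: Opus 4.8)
The plan is to obtain the discrete Poincaré–Wirtinger inequality \eqref{d.poincare} by invoking the result of \cite[Theorem~5]{BCF12} essentially as a black box, after matching the normalizations and notation of that paper to the present framework. The statement is a \emph{discrete} functional inequality on the space $X(\TT)$ of piecewise-constant functions, bounding the $L^2$ deviation of $f$ from its mean $\bar f$ by the discrete $H^1$-seminorm $|f|_{1,2,\TT}$. First I would recall that $|f|_{1,2,\TT}^2 = \sum_{\sigma=K|L}\tau_\sigma |f_K-f_L|^2$ plays the role of $\|\na f\|_{L^2}^2$, with the transmissibility coefficients $\tau_\sigma=\m(\sigma)/d_\sigma$ encoding the mesh geometry. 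The cited theorem provides exactly a bound of the form $\|f-\bar f\|_{0,2,\TT}\le C\,|f|_{1,2,\TT}$ for admissible meshes, so the task reduces to tracking how the regularity parameter $\xi$ from \eqref{2.ass} enters the constant.

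Second, I would make the dependence on $\xi$ explicit. The regularity hypothesis \eqref{2.ass}, $\dd(x_K,\sigma)\ge \xi\,\dd(x_K,x_L)$, is precisely the uniform control on the mesh needed to compare distances from cell centers to faces with inter-center distances; this is what allows a discrete Poincaré inequality to hold with a constant independent of the mesh refinement. The proof technique in \cite{BCF12} typically proceeds by writing $f_K-\bar f$ as a telescoping sum of jumps across a chain of faces connecting cells, applying a discrete Cauchy–Schwarz inequality, and then summing over all cells; the factor $\xi^{-1/2}$ arises when one passes from the natural geometric weights to the weights $\tau_\sigma$ appearing in the seminorm. I would verify that the constant $C_p$ can indeed be chosen to depend only on $d$ and $\Omega$, with all mesh-regularity dependence isolated in the explicit $\xi^{-1/2}$ factor, matching the form stated in \eqref{d.poincare}.

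The main obstacle, and the reason this is stated as a lemma citing external work rather than proved from scratch, is that a self-contained proof would require reconstructing the combinatorial covering argument of \cite{BCF12}: one must control, uniformly over all cells and all admissible meshes, the number and length of the face-chains used to connect an arbitrary cell to a reference region, and show this is bounded in terms of $d$ and $\Omega$ alone. This is genuinely delicate because it is exactly where the geometry of $\Omega$ and the uniform non-degeneracy of the mesh (via $\xi$) must be combined. Since the authors only need the \emph{existence} of such a constant in the clean form \eqref{d.poincare} as the starting point for the discrete Beckner inequalities of the next subsection, the natural and honest approach is to cite \cite[Theorem~5]{BCF12} directly and simply record the precise form of the constant, rather than reproduce that argument here.
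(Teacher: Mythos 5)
Your proposal matches the paper exactly: the authors give no proof of this lemma, stating it as a direct consequence of \cite[Theorem~5]{BCF12} with the mesh-regularity dependence isolated in the explicit $\xi^{-1/2}$ factor. Your additional remarks on where $\xi$ enters and why the covering argument is delegated to the cited work are consistent with this reading.
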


\begin{lemma}[Discrete generalized Beckner inequality I]\label{lem.d.bec}
Let $0<q<2$, $pq>1$ or $q=2$ and $0<p\le 1$, 
and $f\in X(\TT)$. Then
$$
  \int_\Omega |f|^qdx - \left(\int_\Omega |f|^{1/p}dx\right)^{pq}
  \le C_b(p,q)|f|_{1,2,\TT}^q
$$
holds, where
$$
  C_b(p,q) = \frac{2(pq-1)C_p^q}{(2-q)\xi^{q/2}}\quad\mbox{if }q<2, \quad
  C_b(p,2) = \frac{C_p^2}{\xi}\quad\mbox{if }q=2.
$$
$C_p$ is the constant in the discrete Poincar\'e-Wirtinger inequality,
and $\xi$ is defined in \eqref{2.ass}.
\end{lemma}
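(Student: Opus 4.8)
The plan is to translate the proof of Lemma~\ref{lem.beckner} almost verbatim, exploiting the fact that for a piecewise-constant function $f\in X(\TT)$ the discrete norm $\|f\|_{0,p,\TT}=(\sum_{K\in\TT}\m(K)|f_K|^p)^{1/p}$ is \emph{exactly} the continuous $L^p(\Omega)$ norm. Inspecting the continuous argument, every step except one is an inequality between integral ($L^p$) norms of $f$: Hölder, Jensen/power-mean monotonicity, the Lyapunov interpolation \eqref{b.aux4}, and the log-convexity that comes from Cauchy--Schwarz. All of these transfer without any change, since they are insensitive to whether $f$ is piecewise constant. The single place where derivatives enter is the Poincar\'e--Wirtinger inequality. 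Replacing the continuous inequality \eqref{b.pwi} by the discrete one \eqref{d.poincare} turns the quantity $C_P\|\na f\|_{L^2(\Omega)}$ into $(C_p/\xi^{1/2})|f|_{1,2,\TT}$, hence $C_P^q$ into $C_p^q/\xi^{q/2}$, which is precisely the advertised form of the constant $C_b(p,q)$.

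Concretely, I would first prove the discrete analogue of Lemma~\ref{lem.gpw}, namely $\|f\|_{0,2,\TT}^q\le (C_p^q/\xi^{q/2})|f|_{1,2,\TT}^q+\|f\|_{0,q,\TT}^q$. For $1\le q\le 2$ one rewrites \eqref{d.poincare} as $\|f\|_{0,2,\TT}^2-\|f\|_{0,1,\TT}^2=\|f-\bar f\|_{0,2,\TT}^2\le (C_p^2/\xi)|f|_{1,2,\TT}^2$, applies H\"older ($\|f\|_{0,1,\TT}\le\|f\|_{0,q,\TT}$, using $\m(\Omega)=1$) and the subadditivity of $x\mapsto x^{q/2}$; for $0<q<1$ the elementary real inequality \eqref{b.ab} and the interpolation \eqref{b.aux4} carry over unchanged, since \eqref{b.ab} is a pointwise statement about real numbers. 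With this in hand I would rerun the proof of Lemma~\ref{lem.beckner}. For $q=2$ the discrete Poincar\'e--Wirtinger inequality together with Jensen (the power-mean bound $\|f\|_{0,1,\TT}\le\|f\|_{0,1/p,\TT}$, valid for $0<p\le 1$) yields the claim; note that the dimensional lower bound on $p$ from the continuous case simply disappears, because a piecewise-constant $f$ lies in every $L^s(\Omega)$ and no Sobolev embedding is needed—this is exactly why the hypotheses reduce to $0<p\le 1$. For $0<q<2$ I would reproduce the logarithmic-Sobolev step (the bound \eqref{b.aux}, then the discrete generalized Poincar\'e--Wirtinger inequality and $\log(1+x)\le x$ to obtain the analogue of \eqref{b.lsi}), followed by the convexity-of-$G$ argument that gives the analogue of \eqref{b.aux3}; combining the two and setting $p=r/q$ delivers the inequality for all $pq=r>1$.

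The manipulations through the auxiliary functions $F(r)$ and $G(r)$ deserve one comment: they are calculus in the \emph{exponent} $r$ applied to the map $r\mapsto\int_\Omega|f|^r\,dx$, which for fixed $f\in X(\TT)$ is a finite and smooth function of $r$ (since $f$ is bounded and $\m(\Omega)<\infty$). These arguments treat $f$ as frozen and differentiate only in $r$, so they are entirely unaffected by the discretization. Consequently the main ``obstacle'' here is not analytic but bookkeeping: one must verify that no step of the continuous proof secretly invokes a chain rule or a Sobolev embedding of $f$ itself—precisely the ingredients whose absence makes the first-order entropies hard to discretize. Once one confirms that the continuous argument is derivative-free except through Poincar\'e--Wirtinger, the translation is automatic, and the discrete inequality follows with the stated constants.
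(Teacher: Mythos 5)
Your proposal is correct and follows essentially the same route as the paper: the authors likewise observe that the continuous argument for Lemma~\ref{lem.beckner} only touches derivatives through the Poincar\'e--Wirtinger inequality, substitute the discrete version \eqref{d.poincare} (turning $C_P^q$ into $C_p^q/\xi^{q/2}$), reuse \eqref{b.aux}, \eqref{b.aux3} and the generalized Poincar\'e--Wirtinger step, and note that the lower bound on $p$ disappears in the finite-dimensional setting for exactly the reason you give. No gaps.
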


\begin{proof}
The proof follows the lines of the proof of Lemma \ref{lem.beckner} noting that
in the discrete (finite-dimensional) setting, we do not need anymore 
the lower bound on $p$.
Indeed, if $q=2$, the conclusion results from the discrete 
Poincar\'e-Wirtinger inequality \eqref{d.poincare} and the Jensen inequality.
If $q<2$, let $f\in X(\TT)$. Then we have from \eqref{b.aux3} and \eqref{b.aux}
\begin{align}
  \int_\Omega|f|^q dx - \left(\int_\Omega |f|^{1/p}dx\right)^{pq}
  &\le (pq-1)\int_\Omega|f|^q\log\frac{|f|^q}{\|f\|_{0,q,\TT}^q}dx \nonumber \\
  &\le \frac{2(pq-1)}{2-q}\|f\|_{0,q,\TT}^q\log
  \frac{\|f\|_{0,2,\TT}^q}{\|f\|_{0,q,\TT}^q}. \label{d.aux}
\end{align}
We employ the discrete Poincar\'e-Wirtinger inequality \eqref{d.poincare},
$$
  \|f\|_{0,2,\TT}^2 - \|f\|_{0,1,\TT}^2 = \|f-\bar f\|_{0,2,\TT}^2
  \le C_p^2\xi^{-1}|f|_{1,2,\TT}^2,
$$
which implies, as in the proof of Lemma \ref{lem.gpw} (see \eqref{b.aux2}), that
$$
  \|f\|_{0,2,\TT}^q \le C_p^q\xi^{-q/2}|f|_{1,2,\TT}^q + \|f\|_{0,q,\TT}^q.
$$
After inserting this inequality into ´\eqref{d.aux}
to replace $\|f\|_{0,2,\TT}$ and using $\log(x+1)\le x$ for $x\ge 0$, 
the lemma follows.
\end{proof}

The following result is proved exactly as in Lemma \ref{lem.beckner2}
using the discrete Poincar\'e-Wirtinger inequality \eqref{d.poincare}
instead of \eqref{b.pwi}.

\begin{lemma}[Discrete generalized Beckner inequality II]\label{lem.d.bec2}
Let $0<q<2$, $pq\ge 1$, and $f\in X(\TT)$. Then
$$
 \|f\|_{0,q,\TT}^{2-q}\left(\int_\Omega |f|^qdx 
 - \left(\int_\Omega |f|^{1/p}dx\right)^{pq}\right)
  \le C'_b(p,q)|f|_{1,2,\TT}^2
$$
holds, where
$$
  C'_b(p,q) = \left\{\begin{array}{ll}
  \dfrac{q(pq-1)C_p^2}{(2-q)\xi} &\mbox{ if }1\le q<2, \\[3mm]
  \dfrac{(pq-1)C_p^2}{\xi} &\mbox{ if }0<q<1,
  \end{array}\right.
$$
$C_p$ is the constant in the discrete Poincar\'e-Wirtinger inequality,
and $\xi$ is defined in \eqref{2.ass}.
\end{lemma}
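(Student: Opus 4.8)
The plan is to mirror the proof of Lemma~\ref{lem.beckner2} essentially line by line, the guiding observation being that every auxiliary estimate used there, \emph{except} the Poincar\'e--Wirtinger inequality, is a \emph{zeroth-order} inequality: the intermediate log-entropy bound \eqref{b.aux}, the convexity bound \eqref{b.aux3}, and the interpolation inequality \eqref{b.aux4} involve $f$ only through integrals $\int_\Omega|f|^s\,dx$. For a piecewise-constant $f\in X(\TT)$ one has $\int_\Omega|f|^s\,dx=\sum_{K\in\TT}\m(K)|f_K|^s=\|f\|_{0,s,\TT}^s$, so these inequalities apply \emph{verbatim} with every $L^s(\Omega)$ norm reread as the corresponding discrete norm $\|\cdot\|_{0,s,\TT}$ (this is exactly the mechanism already exploited in the proof of Lemma~\ref{lem.d.bec}). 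The only genuinely first-order ingredient is the Poincar\'e--Wirtinger inequality, and here I would replace \eqref{b.pwi}/\eqref{b.aux2} by its discrete version \eqref{d.poincare}; this is the single point where $\|\na f\|_{L^2(\Omega)}$ is exchanged for the seminorm $|f|_{1,2,\TT}$ and where the extra factor $\xi^{-1}$ enters the constant.

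For the case $1\le q<2$ I would start from \eqref{b.aux}, now read with discrete norms, bounding $\int_\Omega|f|^q\log(|f|^q/\|f\|_{0,q,\TT}^q)\,dx$ by $\tfrac{q}{2-q}\|f\|_{0,q,\TT}^q\log(\|f\|_{0,2,\TT}^2/\|f\|_{0,q,\TT}^2)$. Squaring \eqref{d.poincare} and using $\m(\Omega)=1$ produces the discrete analogue of \eqref{b.aux2}, namely $\|f\|_{0,2,\TT}^2\le \tfrac{C_p^2}{\xi}|f|_{1,2,\TT}^2+\|f\|_{0,q,\TT}^2$; inserting this and applying $\log(x+1)\le x$ reproduces the estimate \eqref{b.aux5} with $C_P^2$ replaced by $C_p^2/\xi$. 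Combining with \eqref{b.aux3} taken at $r=pq$ then yields the claim with $C_b'(p,q)=q(pq-1)C_p^2/((2-q)\xi)$.

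For $0<q<1$ the extra ingredient is the discrete counterpart of \eqref{b.wirt}, i.e.\ $\|f\|_{0,q,\TT}^2+\tfrac{2-q}{q}\tfrac{C_p^2}{\xi}|f|_{1,2,\TT}^2-\|f\|_{0,2,\TT}^2\ge0$. I would establish it exactly as in Lemma~\ref{lem.beckner2}: bound the seminorm term from below by $\|f\|_{0,2,\TT}^2-\|f\|_{0,1,\TT}^2$ via \eqref{d.poincare}, estimate $\|f\|_{0,q,\TT}^2$ from below by the discrete interpolation inequality \eqref{b.aux4}, and then run the same convexity argument for the auxiliary function $G$ (with $A=\|f\|_{0,2,\TT}^2$ and variable $y=\|f\|_{0,1,\TT}^2$), which is purely algebraic in the norms and satisfies $G(A)=0$, $G'(A)=0$, $G''\ge0$. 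Feeding the resulting inequality into \eqref{b.aux3}/\eqref{b.aux} and again using $\log(x+1)\le x$ gives the constant $C_b'(p,q)=(pq-1)C_p^2/\xi$.

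I do not expect a substantive obstacle: the argument is a faithful translation, so the real work is conceptual rather than computational. The main point to verify carefully is precisely the claim underlying the whole plan, namely that \eqref{b.aux}, \eqref{b.aux3}, and \eqref{b.aux4} are measure-agnostic, holding for any fixed function with finite $L^s$ norms (their derivations use only differentiation in the exponent, H\"older, and Cauchy--Schwarz, no $H^1$ structure); granting this, no re-derivation is needed and one merely substitutes discrete norms and tracks the single factor $\xi^{-1}$ coming from squaring \eqref{d.poincare}. As already observed for Lemma~\ref{lem.d.bec}, the finite-dimensional setting removes any dimensional lower bound on $p$, though here the continuous statement required only $pq\ge1$, so nothing is lost. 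The mildest technical caveat is the treatment of cells with $f_K=0$, but with the convention $0\log0=0$ all sums remain well defined, exactly as in the continuous case.
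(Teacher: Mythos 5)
Your proposal is correct and matches the paper's approach exactly: the paper states that Lemma \ref{lem.d.bec2} ``is proved exactly as in Lemma \ref{lem.beckner2} using the discrete Poincar\'e-Wirtinger inequality \eqref{d.poincare} instead of \eqref{b.pwi}'', which is precisely the line-by-line translation you describe, with the zeroth-order inequalities \eqref{b.aux}, \eqref{b.aux3}, \eqref{b.aux4} carrying over verbatim and the factor $\xi^{-1}$ entering only through \eqref{d.poincare}.
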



\subsection{Zeroth-order entropies}\label{sec.disc.E}

We prove a result which is the discrete analogue of Theorem \ref{thm.E.poly}.
Recall that the discrete entropies $E_\alpha^d[u^k]$ are defined in \eqref{2.Ed}.

\begin{theorem}[Polynomial decay of $E^d_\alpha$]\label{thm.E.poly2}
Let $\alpha>0$ and $\beta>1$. Let $(u_K^k)_{K\in\TT,k\ge 0}$ be
a solution to the finite-volume scheme \eqref{2.fv} with $\inf_{K\in\TT}u_K^0\geq0$. Then
$$
  E^d_\alpha[u^k] \le \frac{1}{(c_1 t^k+c_2)^{(\alpha+1)/(\beta-1)}}, \quad
  k\ge 0,
$$
where
\begin{align*}
  c_1 &= (\beta-1)\left(\frac{(\alpha+1)(\alpha+\beta)^2}{4\alpha\beta}
  \left(\frac{C_b(p,q)}{\alpha+1}\right)^{(\alpha+\beta)/(\alpha+1)}
  +(\alpha+\beta)\triangle t E^d_\alpha[u^0]^{(\alpha+1)/(\beta-1)}\right)^{-1}, \\
  c_2 &= E^d_\alpha[u^0]^{-(\beta-1)/(\alpha+1)},
\end{align*}
and $C_b(p,q)$ for $p=(\alpha+\beta)/2$ and $q=2(\alpha+1)/(\alpha+\beta)$
is defined in Lemma \ref{lem.d.bec}.
\end{theorem}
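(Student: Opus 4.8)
The plan is to mirror the continuous proof of Theorem~\ref{thm.E.poly} at the discrete level, with the implicit-Euler time discretization forcing one extra convexity argument. First I would establish the discrete analogue of the differential entropy-dissipation identity. Testing the scheme \eqref{2.fv} with $(u_K^{k+1})^\alpha$ and summing over $K\in\TT$, the discrete summation-by-parts on the interior edges gives
\begin{align*}
  \frac{1}{\triangle t}\sum_{K\in\TT}\m(K)\big(u_K^{k+1}-u_K^k\big)(u_K^{k+1})^\alpha
  + \sum_{\substack{\sigma\in\E_{\rm int},\\ \sigma=K|L}}\tau_\sigma
  \big((u_K^{k+1})^\beta-(u_L^{k+1})^\beta\big)
  \big((u_K^{k+1})^\alpha-(u_L^{k+1})^\alpha\big) = 0.
\end{align*}
The edge sum is nonnegative by monotonicity of $x\mapsto x^\beta$ and $x\mapsto x^\alpha$, and it plays the role of the continuous dissipation $\int_\Omega \na u^\alpha\cdot\na u^\beta\,dx$. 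The first obstacle is that, unlike the continuous case, there is no exact chain rule: the product of discrete differences of $u^\beta$ and $u^\alpha$ cannot be rewritten exactly as $|u^{(\alpha+\beta)/2}|^2_{1,2,\TT}$. I would handle this by invoking an auxiliary elementary inequality (of the type proved in the appendix) comparing $(a^\beta-b^\beta)(a^\alpha-b^\alpha)$ with $\frac{4\alpha\beta}{(\alpha+\beta)^2}(a^{(\alpha+\beta)/2}-b^{(\alpha+\beta)/2})^2$, which bounds the dissipation below by a constant times $|u^{(\alpha+\beta)/2}|_{1,2,\TT}^2$.

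The second step is to convert the left-hand time-difference term into a genuine decrement of $E_\alpha^d$. Here the convexity of $x\mapsto x^{\alpha+1}$ enters: since $(u^{k+1})^\alpha$ is a subgradient of $\frac{1}{\alpha+1}x^{\alpha+1}$ at $x=u^{k+1}$, one has
$$
  (u_K^{k+1})^\alpha\big(u_K^{k+1}-u_K^k\big)
  \ge \frac{1}{\alpha+1}\big((u_K^{k+1})^{\alpha+1}-(u_K^{k})^{\alpha+1}\big).
$$
Summing against $\m(K)$ and using mass conservation $\sum_K\m(K)u_K^{k}=\|u_0\|_{L^1}$ (so the subtracted term in \eqref{2.Ed} is time-independent), the left term dominates $(E_\alpha^d[u^{k+1}]-E_\alpha^d[u^k])/\triangle t$. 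Combining with the lower bound on the dissipation and the discrete Beckner inequality (Lemma~\ref{lem.d.bec}) applied with $p=(\alpha+\beta)/2$, $q=2(\alpha+1)/(\alpha+\beta)$, $f=(u^{k+1})^{(\alpha+\beta)/2}$ — whose hypotheses $0<q<2$, $pq>1$ hold exactly as in the continuous proof — yields the discrete differential inequality
$$
  \frac{E_\alpha^d[u^{k+1}]-E_\alpha^d[u^k]}{\triangle t}
  \le -c_0\,E_\alpha^d[u^{k+1}]^{(\alpha+\beta)/(\alpha+1)},
$$
with $c_0 = \frac{4\alpha\beta}{(\alpha+\beta)^2}\big((\alpha+1)/C_b(p,q)\big)^{(\alpha+\beta)/(\alpha+1)}$.

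The final step is a discrete nonlinear Gronwall argument. The implicit character (the right side is evaluated at $k+1$) is what distinguishes the constant $c_1$ here from its continuous counterpart $C_1$ in Theorem~\ref{thm.E.poly}, producing the extra additive term $(\alpha+\beta)\triangle t\,E_\alpha^d[u^0]^{(\alpha+1)/(\beta-1)}$ inside $c_1$. I would appeal directly to the discrete nonlinear Gronwall lemma stated in the appendix, applied to the decreasing sequence $E_\alpha^d[u^k]$ with exponent $(\alpha+\beta)/(\alpha+1)>1$, to obtain the stated algebraic decay and the explicit constants $c_1,c_2$. The main obstacle is the first one — replacing the continuous chain rule by the discrete convexity/elementary-inequality pair while keeping the constants sharp enough to recover the clean continuous rate $(\alpha+1)/(\beta-1)$; once the discrete differential inequality is in hand, the Gronwall step is essentially bookkeeping.
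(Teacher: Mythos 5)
Your proposal is correct and follows essentially the same route as the paper: the convexity inequality $y^{\alpha+1}-x^{\alpha+1}\le(\alpha+1)y^\alpha(y-x)$ combined with the scheme and a summation by parts, the elementary two-variable inequality of Lemma \ref{lem.ineq1} to replace the missing chain rule, the discrete Beckner inequality of Lemma \ref{lem.d.bec} with the same $(p,q)$, and the discrete nonlinear Gronwall lemma (Corollary \ref{coro.gron2}), whose constant accounts for the extra $\triangle t$-term in $c_1$ exactly as you describe.
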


\begin{proof}
The idea is to ``translate'' the proof of Theorem \ref{thm.E.poly} to the discrete
case. To this end, we use the elementary inequality
$y^{\alpha+1}-x^{\alpha+1}\le (\alpha+1)y^{\alpha}(y-x)$, which follows from
the convexity of the mapping $x\mapsto x^{\alpha+1}$, and scheme \eqref{2.fv}:
\begin{align*}
  E^d_\alpha[u^{k+1}] - E^d_\alpha[u^k]
  &= \frac{1}{\alpha+1}\sum_{K\in\TT}\m(K)
  \big((u_K^{k+1})^{\alpha+1} - (u_K^{k})^{\alpha+1}\big) \\
  &\le \sum_{K\in\TT}\m(K)(u_K^{k+1})^\alpha(u_K^{k+1}-u_K^k) \\
  &\le -\triangle t\sum_{K\in\TT}\sum_{\substack{\sigma\in\E_{\rm int},\\ 
  \sigma=K|L}}\tau_\sigma
  (u_K^{k+1})^\alpha\big((u_K^{k+1})^\beta - (u_L^{k+1})^\beta\big).
\end{align*}
Rearranging the sum leads to
\begin{equation}\label{d.aux2}
  E^d_\alpha[u^{k+1}] - E^d_\alpha[u^k]
  \le -\triangle t\sum_{\substack{\sigma\in\E_{\rm int},\\ \sigma=K|L}}\tau_\sigma
  \big((u_K^{k+1})^\alpha-(u_L^{k+1})^\alpha\big)
  \big((u_K^{k+1})^\beta - (u_L^{k+1})^\beta\big).
\end{equation}
Then, employing the inequality in Lemma \ref{lem.ineq1} (see the appendix), 
it follows that
\begin{align*}
  E^d_\alpha[u^{k+1}] - E^d_\alpha[u^k]
  &\le -\frac{4\alpha\beta\triangle t}{(\alpha+\beta)^2}
  \sum_{\substack{\sigma\in\E_{\rm int},\\ \sigma=K|L}}\tau_\sigma
  \big((u_K^{k+1})^{(\alpha+\beta)/2} - (u_L^{k+1})^{(\alpha+\beta)/2}\big)^2 \\
  &\le -\frac{4\alpha\beta\triangle t}{(\alpha+\beta)^2}
  |(u^{k+1})^{(\alpha+\beta)/2}|_{1,2,\TT}^2,
\end{align*}
and applying Lemma \ref{lem.d.bec} with $p=(\alpha+\beta)/2$,
$q=2(\alpha+1)/(\alpha+\beta)$, and $f=(u^{k+1})^{(\alpha+\beta)/2}$,
$$
  E^d_\alpha[u^{k+1}] - E^d_\alpha[u^k]
  \le -\frac{4\alpha\beta\triangle t}{(\alpha+\beta)^2}
  \left(\frac{\alpha+1}{C_b(p,q)}\right)^{(\alpha+\beta)/(\alpha+1)}
  E^d_\alpha[u^{k+1}]^{(\alpha+\beta)/(\alpha+1)}.
$$
The discrete nonlinear Gronwall lemma (see Corollary \ref{coro.gron2} in the appendix) with 
$$
  \tau = \frac{4\alpha\beta\triangle t}{(\alpha+\beta)^2}
  \left(\frac{\alpha+1}{C_b(p,q)}\right)^{(\alpha+\beta)/(\alpha+1)}, \quad
  \gamma = \frac{\alpha+\beta}{\alpha+1} > 1,
$$
implies that
$$
  E^d_\alpha[u^k] \le \frac{1}{(E^d_\alpha[u^0]^{1-\gamma} + c_1 t^k)^{1/(\gamma-1)}},
  \quad k\ge 0,
$$
where $c_1=(\gamma-1)/(1+\gamma\tau E^d_\alpha[u^0]^{\gamma-1})$.
Finally, computing $c_1$ shows the result.
\end{proof}

The discrete analogue to Theorem \ref{thm.E.exp} is as follows.

\begin{theorem}[Exponential decay for $E^d_\alpha$]\label{thm.E.exp2}
Let $(u_K^k)_{K\in\TT,k\ge 0}$ be a solution to the 
finite-volume scheme \eqref{2.fv} and let $0<\alpha\le 1$, $\beta>0$, $\inf_{K\in\TT}u_K^0\geq0$.
Then
$$
  E^d_\alpha[u^k] \le E^d_\alpha[u^0]e^{-\lambda t^k}, \quad k\ge 0.
$$
The constant $\lambda$ is given by
$$
  \lambda = \frac{4\alpha\beta}{C_b(\tfrac12(\alpha+1),2)(\alpha+1)}
  \inf_{K\in\TT}\big((u_K^0)^{\beta-1}\big)\geq0,
$$
for $\beta>0$, and
$$
  \lambda = \frac{4\alpha\beta(\alpha+1)}{C'_b(p,q)(\alpha+\beta)^2}
  \,\|u^0\|_{0,1,\TT}^{\beta-1}
$$
for $\beta>1$. Here $C'_b(p,q)>0$ is the constant from Lemma \ref{lem.d.bec2} with
$p=(\alpha+\beta)/2$ and $q=2(\alpha+1)/(\alpha+\beta)$.
\end{theorem}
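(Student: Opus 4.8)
The plan is to transport the continuous argument of Theorem \ref{thm.E.exp} to the scheme, reusing the discretization pattern of Theorem \ref{thm.E.poly2}. The common starting point, valid for the present range of $\alpha$, is the dissipation estimate \eqref{d.aux2},
\[
  E^d_\alpha[u^{k+1}] - E^d_\alpha[u^k] \le -\triangle t\sum_{\substack{\sigma\in\E_{\rm int},\\ \sigma=K|L}}\tau_\sigma\big((u_K^{k+1})^\alpha-(u_L^{k+1})^\alpha\big)\big((u_K^{k+1})^\beta-(u_L^{k+1})^\beta\big),
\]
obtained exactly as before from the convexity of $x\mapsto x^{\alpha+1}$ and the scheme \eqref{2.fv}. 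The two claimed rates then come from two different lower bounds for this edge sum, mirroring the two halves of the proof of Theorem \ref{thm.E.exp}.

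For the rate valid for all $\beta>0$, the continuous computation rests on the identity $\na u^\alpha\cdot\na u^\beta=\frac{4\alpha\beta}{(\alpha+1)^2}u^{\beta-1}|\na u^{(\alpha+1)/2}|^2$, which has no discrete chain-rule analogue. I would replace it by the per-edge inequality
\[
  (a^\alpha-b^\alpha)(a^\beta-b^\beta) \ge \frac{4\alpha\beta}{(\alpha+1)^2}\min\{a^{\beta-1},b^{\beta-1}\}\big(a^{(\alpha+1)/2}-b^{(\alpha+1)/2}\big)^2, \quad a,b>0.
\]
Taking $a>b$ without loss of generality, this factorizes into the monotonicity estimate $a^\beta-b^\beta=\beta\int_b^a s^{\beta-1}\,ds\ge\beta(a-b)\min\{a^{\beta-1},b^{\beta-1}\}$ (correct for $\beta<1$ and $\beta>1$ alike) times Lemma \ref{lem.ineq1} read at exponent $\beta=1$, i.e.\ $(a^\alpha-b^\alpha)(a-b)\ge\frac{4\alpha}{(\alpha+1)^2}(a^{(\alpha+1)/2}-b^{(\alpha+1)/2})^2$. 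On every interior edge one then has $\min\{(u_K^{k+1})^{\beta-1},(u_L^{k+1})^{\beta-1}\}\ge\inf_{J\in\TT}(u_J^{k+1})^{\beta-1}\ge\inf_{J\in\TT}(u_J^0)^{\beta-1}$, the last step being range preservation of the scheme (discrete maximum principle), which handles $\beta>1$ and $\beta<1$ at once since $s\mapsto s^{\beta-1}$ is monotone in either direction. Pulling this infimum out of the sum and invoking Lemma \ref{lem.d.bec} with $q=2$, $p=\tfrac12(\alpha+1)$, $f=(u^{k+1})^{(\alpha+1)/2}$ yields $E^d_\alpha[u^{k+1}]-E^d_\alpha[u^k]\le-\lambda\triangle t\,E^d_\alpha[u^{k+1}]$ with the stated $\lambda$; the admissibility requirement $p\le1$ there is exactly $\alpha\le1$.

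For the sharper rate when $\beta>1$, I would instead bound the edge sum directly by Lemma \ref{lem.ineq1} at the true exponent $(\alpha+\beta)/2$, giving $E^d_\alpha[u^{k+1}]-E^d_\alpha[u^k]\le-\frac{4\alpha\beta\triangle t}{(\alpha+\beta)^2}|(u^{k+1})^{(\alpha+\beta)/2}|_{1,2,\TT}^2$, and apply the discrete Beckner inequality II (Lemma \ref{lem.d.bec2}) with $p=(\alpha+\beta)/2$, $q=2(\alpha+1)/(\alpha+\beta)$, $f=(u^{k+1})^{(\alpha+\beta)/2}$. A short exponent count gives $\|f\|_{0,q,\TT}^{2-q}=\|u^{k+1}\|_{0,\alpha+1,\TT}^{\beta-1}$ and identifies the bracket as $(\alpha+1)E^d_\alpha[u^{k+1}]$, after which mass conservation and the discrete Jensen inequality $\|u^{k+1}\|_{0,\alpha+1,\TT}\ge\|u^{k+1}\|_{0,1,\TT}=\|u^0\|_{0,1,\TT}$ (the direction being correct because $\beta-1>0$) produce the second value of $\lambda$. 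In both cases one is left with a one-step contraction $E^d_\alpha[u^{k+1}]\le(1+\lambda\triangle t)^{-1}E^d_\alpha[u^k]$, and the discrete linear Gronwall argument---the analogue of Corollary \ref{coro.gron2}---gives $E^d_\alpha[u^k]\le E^d_\alpha[u^0]e^{-\lambda t^k}$.

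I expect the main obstacle to be the per-edge inequality in the $\beta>0$ case: securing simultaneously the sharp constant $\frac{4\alpha\beta}{(\alpha+1)^2}$ and a factor $\min\{a^{\beta-1},b^{\beta-1}\}$ that can afterwards be bounded below uniformly over edges by $\inf_J(u_J^0)^{\beta-1}$. The factorization above reduces this to Lemma \ref{lem.ineq1} at $\beta=1$ plus a one-variable monotonicity estimate, so no genuinely new functional inequality is required; the remaining items---the exponent bookkeeping for Beckner II and the final Gronwall passage---are routine.
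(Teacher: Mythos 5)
Your proposal follows the paper's proof essentially verbatim: the same starting estimate \eqref{d.aux2}, the same per-edge inequality (which is exactly Corollary \ref{coro.ineq1}, proved in the appendix by precisely the factorization you describe — Lemma \ref{lem.ineq1} at exponent $1$ times the monotonicity bound on $a^\beta-b^\beta$), the discrete maximum principle, Lemma \ref{lem.d.bec} with $p=\tfrac12(\alpha+1)$, $q=2$ for general $\beta>0$, and Lemma \ref{lem.ineq1} followed by Lemma \ref{lem.d.bec2} with the mass-conservation/Jensen step for $\beta>1$, concluding with the discrete Gronwall argument. The argument is correct and matches the paper's in every essential respect.
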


\begin{proof}
Let $\alpha\le 1$ and $\beta>0$.
As in the proof of Theorem \ref{thm.E.poly2}, we find that (see \eqref{d.aux2})
$$
  E^d_\alpha[u^{k+1}] - E^d_\alpha[u^k]
  \le -\triangle t\sum_{\substack{\sigma\in\E_{\rm int},\\ \sigma=K|L}}\tau_\sigma
  \big((u_K^{k+1})^\alpha-(u_L^{k+1})^\alpha\big)
  \big((u_K^{k+1})^\beta - (u_L^{k+1})^\beta\big).
$$
Employing Corollary \ref{coro.ineq1} (see the appendix), we obtain
\begin{align*}
  E^d_\alpha[u^{k+1}] - E^d_\alpha[u^k]
  &\le -\frac{4\alpha\beta\triangle t}{(\alpha+1)^2}
  \sum_{\substack{\sigma\in\E_{\rm int},\\ \sigma=K|L}}\tau_\sigma
  \min\big\{(u_K^{k+1})^{\beta-1},(u_L^{k+1})^{\beta-1}\big\} \\
  &\phantom{xx}{}\times
  \big((u_K^{k+1})^{(\alpha+1)/2} - (u_L^{k+1})^{(\alpha+1)/2}\big)^2 \\
  &\le -\frac{4\alpha\beta\triangle t}{(\alpha+1)^2}
  \inf_{K\in\TT}(u_K^{k+1})^{\beta-1}
  |(u^{k+1})^{(\alpha+1)/2}|_{1,2,\TT}^2 \\
  &\le -\frac{4\alpha\beta\triangle t}{C_b(\tfrac12(\alpha+1),2)(\alpha+1)}
  \inf_{K\in\TT}(u_K^{0})^{\beta-1} E^d_\alpha[u^{k+1}],
\end{align*}
where we have used Lemma \ref{lem.d.bec} with $p=(\alpha+1)/2$, $q=2$, and
$f=u^{(\alpha+1)/2}$. Now, the Gronwall lemma shows the claim.

Next, let $\beta>1$. As in the proof of Theorem \ref{thm.E.poly2}, we find that 
$$
  E^d_\alpha[u^{k+1}] - E^d_\alpha[u^k]
  \le -\frac{4\alpha\beta\triangle t}{(\alpha+\beta)^2}
  |(u^{k+1})^{(\alpha+1)/2}|_{1,2,\TT}^2.
$$
We apply Lemma \ref{lem.d.bec2} with $p=(\alpha+\beta)/2$, 
$q=2(\alpha+1)/(\alpha+\beta)$, and $f=u^{(\alpha+\beta)/2}$ to obtain
\begin{align*}
  E^d_\alpha[u^{k+1}] - E^d_\alpha[u^k]
  &\le -\frac{4\alpha\beta(\alpha+1)\triangle t}{(\alpha+\beta)^2}
  \frac{\|u^{k+1}\|_{0,\alpha+1,\TT}^{\beta-1}}{C'_b(p,q)}
  E^d_\alpha[u^{k+1}]\\
  &\le -\frac{4\alpha\beta(\alpha+1)\triangle t}{(\alpha+\beta)^2}
  \frac{\|u^0\|_{0,1,\TT}^{\beta-1}}{C'_b(p,q)}
  E^d_\alpha[u^{k+1}].
\end{align*}
Then Gronwall's lemma finishes the proof.
\end{proof}


\subsection{First-order entropies}\label{sec.disc.F}

We consider the diffusion equation \eqref{1.pme} on the half open unit cube $[0,1)^d\subset\R^d$ with multiperiodic boundary conditions (this is topologically equivalent to the torus $\T^d$). By identifying
``opposite'' faces on $\pa\Omega$, we can construct a family of control
volumes and a family of edges in such a way that every face 
is an interior face. Then cells with such identified faces are neighboring cells.

\begin{theorem}[Exponential decay of $F^d_\alpha$]\label{thm.F.exp2}
Let $(u_K^k)_{K\in\TT,\,k\ge 0}$ 
be a solution to the finite-volume scheme \eqref{2.fv} 
with $\Omega=\T^d$ and $\inf_{K\in\TT}u_K^0\geq0$. Then, for all $1\le\alpha\le 2$ and $\beta=\alpha/2$,
$$
  F^d_\alpha[u^{k+1}] \le F^d_\alpha[u^{k}], \quad k\in\N.
$$
Furthermore, if $d=1$ and the grid is uniform with $N$ subintervals,
$$
  F^d_\alpha[u^k] \le F^d_\alpha[u_0] e^{-\lambda t^k},
$$
where $\lambda=4\beta\sin^2(\pi/N)\min_i((u_i^0)^{2(\beta-1)})\geq0$.
\end{theorem}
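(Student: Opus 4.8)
The plan is to exploit the algebraic simplification produced by the choice $\beta=\alpha/2$: the flux potential $u^\beta$ then coincides with the entropy variable $v:=u^{\alpha/2}$, so that $F^d_\alpha[u]=\Phi(v)$ with $\Phi(v):=\tfrac12|v|_{1,2,\TT}^2=\tfrac12\sum_{\sigma=K|L}\tau_\sigma(v_K-v_L)^2$ the discrete Dirichlet energy. This $\Phi$ is a nonnegative quadratic form whose gradient is the discrete Laplacian, $(\nabla\Phi(v))_K=\sum_{\sigma=K|L}\tau_\sigma(v_K-v_L)=:(\diver_\TT v)_K$, and on the torus (where every face is interior, so no boundary terms occur) scheme \eqref{2.fv} becomes $\m(K)(u_K^{k+1}-u_K^k)/\triangle t+(\diver_\TT v^{k+1})_K=0$.

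For the monotonicity statement (arbitrary $d$) I would simply use convexity of $\Phi$. Convexity gives
\[
  \Phi(v^k)-\Phi(v^{k+1})\ge\langle\nabla\Phi(v^{k+1}),\,v^k-v^{k+1}\rangle
  =\frac{1}{\triangle t}\sum_{K\in\TT}\m(K)\,(u_K^k-u_K^{k+1})\,(v_K^k-v_K^{k+1}),
\]
where the equality is obtained by inserting the scheme. Every summand is nonnegative, because $u\mapsto v=u^\beta$ is nondecreasing on $[0,\infty)$ and $u_K^k\ge0$ by the discrete maximum principle. Hence $F^d_\alpha[u^{k+1}]\le F^d_\alpha[u^k]$. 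This is exactly the elementary fact that implicit Euler dissipates a convex functional; it bypasses the discrete chain rule, which the introduction identifies as the central obstruction.

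For the one-dimensional exponential decay I would turn this estimate into a spectral-gap estimate. On the uniform periodic grid with $N$ cells one has $\m(K)=1/N$, $\tau_\sigma=N$, and $\diver_\TT$ is the circulant discrete Laplacian with smallest nonzero eigenvalue $\mu_1=4N\sin^2(\pi/N)$; consequently $\|\diver_\TT v\|^2\ge2\mu_1\Phi(v)$ for all $v$. I would then quantify the previous bound using the discrete maximum principle $0\le u_K^k\le M:=\max_K u_K^0$ together with the mean value theorem in the form $(u_K^k-u_K^{k+1})(v_K^k-v_K^{k+1})\ge\beta M^{\beta-1}(u_K^k-u_K^{k+1})^2$, valid since $\beta=\alpha/2\le1$. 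Writing $u_K^k-u_K^{k+1}=(\triangle t/\m(K))(\diver_\TT v^{k+1})_K$ from the scheme and inserting the spectral gap yields $F^d_\alpha[u^k]-F^d_\alpha[u^{k+1}]\ge c\,\triangle t\,F^d_\alpha[u^{k+1}]$ for a constant $c>0$ of the stated form (a multiple of $\beta\sin^2(\pi/N)$ times a power of $\min_i u_i^0$). The discrete Gronwall lemma then gives $F^d_\alpha[u^k]\le F^d_\alpha[u^0]e^{-\lambda t^k}$; matching the precise constant $\lambda=4\beta\sin^2(\pi/N)\min_i(u_i^0)^{2(\beta-1)}$ is a bookkeeping exercise I would not grind through here.

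The hard part is the exponential step, not the monotonicity. The per-step dissipation naturally controls the temporal increment $\|v^{k+1}-v^k\|_{0,2,\TT}$, whereas exponential decay requires a lower bound by the spatial energy $F^d_\alpha[u^{k+1}]$ itself; the bridge between them relies on re-using the scheme to express the increment through $\diver_\TT v^{k+1}$ and is clean only because the uniform periodic Laplacian is circulant, so on a general torus mesh I expect only monotonicity to survive. I would also note that this convexity route uses merely $\beta>0$ for monotonicity and $\beta\le1$ (i.e.\ $\alpha\le2$) for decay, so the lower restriction $\alpha\ge1$ seems inessential to it; the authors instead phrase the closure through concavity of the discrete $dF_\alpha/dt$ in the time-step parameter, which may be what makes $\alpha\ge1$ appear.
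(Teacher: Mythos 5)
Your monotonicity argument is correct and complete, but it follows a genuinely different route from the paper's. The paper does not use convexity of the Dirichlet form in the variable $v=u^{\alpha/2}$; it instead sets $G_\alpha(\tau)=F^d_\alpha[u^{k+1}]-F^d_\alpha[u^{k+1}-\tau a]$ with $a_K=(u_K^{k+1}-u_K^k)/\tau$ frozen, proves that $\tau\mapsto G_\alpha(\tau)$ is \emph{concave} --- this is precisely where both restrictions $\alpha\ge 1$ and $\alpha\le 2$ enter, through the nonnegativity of a quadratic form in $(a_K,a_L)$ whose discriminant carries the factor $(\alpha-1)(2-\alpha)$ --- and then bounds $G_\alpha(\triangle t)\le\triangle t\,\pa_\tau G_\alpha(0)$, which after summation by parts and insertion of the scheme becomes, for $\beta=\alpha/2$, minus a sum of squares. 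Your subgradient inequality $\Phi(v^k)\ge\Phi(v^{k+1})+\langle\nabla\Phi(v^{k+1}),v^k-v^{k+1}\rangle$ combined with the monotonicity of $u\mapsto u^\beta$ reaches the same conclusion with no constraint on $\alpha$ beyond $\beta=\alpha/2>0$: it is the cleaner ``monotone operator'' mechanism, it buys a strictly more general monotonicity statement, and it confirms your closing remark that $\alpha\ge 1$ is an artifact of the concavity closure rather than of the result. For the one-dimensional exponential decay the paper applies the discrete Wirtinger inequality of Shisha to the edge differences $z_i=(u_i^{k+1})^\beta-(u_{i+1}^{k+1})^\beta$, which sum to zero by periodicity; this is the same spectral-gap idea as your circulant-Laplacian eigenvalue bound.

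The one substantive caveat concerns the rate constant, which you dismiss as bookkeeping. Tracking your chain on the uniform grid ($\m(K)=1/N$, $\tau_\sigma=N$) gives a per-step dissipation proportional to $\beta\,N^2\sin^2(\pi/N)\,\min_i\big((u_i^0)^{\beta-1}\big)\,\triangle t\,F^d_\alpha[u^{k+1}]$: one power of $u^{\beta-1}$ comes from the mean value theorem applied to $u\mapsto u^\beta$, and a factor of order $N^2$ comes from $\sum_K\m(K)^{-1}(\diver_\TT v)_K^2$. The stated rate $\lambda=4\beta\sin^2(\pi/N)\min_i\big((u_i^0)^{2(\beta-1)}\big)$ instead carries the power $2(\beta-1)$ because in the paper's dissipation the weight $(u_i^{k+1})^{\beta-1}$ sits \emph{inside} the square before the Wirtinger inequality is applied. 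So your route does deliver exponential decay with an explicit rate, but not the literal $\lambda$ displayed in the theorem; if the goal is to reproduce that exact constant, the final estimate has to be organized as in the paper rather than through the mean-value bound you propose.
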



\begin{proof}
The difference $G_\alpha = F^d_\alpha[u^{k+1}] - F^d_\alpha[u^k]$
can be written as
$$
  G_\alpha = \frac12\sum_{\substack{\sigma\in\E_{\rm int},\\ \sigma=K|L}}
  \tau_\sigma\left(
  \big((u_K^{k+1})^{\alpha/2}-(u_L^{k+1})^{\alpha/2}\big)^2
  - \big((u_K^{k})^{\alpha/2}-(u_L^{k})^{\alpha/2}\big)^2\right).
$$
Introducing $a_K = (u_K^{k+1}-u_K^k)/\tau$, we find that
$$
  G_\alpha = \frac12\sum_{\substack{\sigma\in\E_{\rm int},\\ \sigma=K|L}}
  \tau_\sigma\left(
  \big((u_K^{k+1})^{\alpha/2}-(u_L^{k+1})^{\alpha/2}\big)^2
  - \big((u_K^{k+1}-\tau a_K)^{\alpha/2}-(u_L^{k+1}-\tau a_L)^{\alpha/2}\big)^2\right).
$$
We claim that $G_\alpha$ is concave with respect to $\tau$. Indeed, we compute
\begin{align*}
  \frac{\pa G_\alpha}{\pa\tau}
  &= \frac{\alpha}{2}\sum_{\substack{\sigma\in\E_{\rm int},\\ \sigma=K|L}}
  \tau_\sigma
  \big((u_K^{k+1}-\tau a_K)^{\alpha/2}-(u_L^{k+1}-\tau a_L)^{\alpha/2}\big) \\
  &\phantom{xx}{}\times
  \big((u_K^{k+1}-\tau a_K)^{\alpha/2-1}a_K - (u_L^{k+1}-\tau a_L)^{\alpha/2-1}a_L\big),\\
  \frac{\pa^2 G_\alpha}{\pa\tau^2}
  &= -\frac{\alpha^2}{4}\sum_{\substack{\sigma\in\E_{\rm int},\\ \sigma=K|L}}
  \tau_\sigma
  \big((u_K^{k+1}-\tau a_K)^{\alpha/2-1}a_K-(u_L^{k+1}-\tau a_L)^{\alpha/2-1}a_L
  \big)^2 \\
  &\phantom{xx}{}
  - \frac{\alpha}{2}\left(\frac{\alpha}{2}-1\right)
  \sum_{\substack{\sigma\in\E_{\rm int},\\ \sigma=K|L}}\tau_\sigma
  \big((u_K^{k+1}-\tau a_K)^{\alpha/2}-(u_L^{k+1}-\tau a_L)^{\alpha/2}\big) \\
  &\phantom{xx}{}\times
  \big((u_K^{k+1}-\tau a_K)^{\alpha/2-2}a_K^2 
  - (u_L^{k+1}-\tau a_L)^{\alpha/2-2}a_L^2\big).
\end{align*}
Replacing $u_K^{k+1}-\tau a_K$, $u_L^{k+1}-\tau a_L$ by $u_K^{k}$, $u_L^{k}$,
respectively, the second derivative becomes
\begin{align*}
  \frac{\pa^2 G_\alpha}{\pa\tau^2}
  &= - \frac{\alpha^2}{4}
  \sum_{\substack{\sigma\in\E_{\rm int},\\ \sigma=K|L}}\tau_\sigma
  \big((u_K^k)^{\alpha/2-1}a_K - (u_L^k)^{\alpha/2-1}a_L\big)^2 \\
  &\phantom{xx}{}-\frac{\alpha}{2}\left(\frac{\alpha}{2}-1\right)
  \sum_{\substack{\sigma\in\E_{\rm int},\\ \sigma=K|L}}\tau_\sigma
  \big((u_K^k)^{\alpha/2}-(u_L^k)^{\alpha/2}\big)
  \big((u_K^k)^{\alpha/2-2}a_K^2-(u_L^k)^{\alpha/2-2}a_L^2\big) \\
  &= -\frac{\alpha}{4}
  \sum_{\substack{\sigma\in\E_{\rm int},\\ \sigma=K|L}}\tau_\sigma
  (c_1 a_K^2 + c_2 a_K a_L + c_3 a_L^2´),
\end{align*}
where
\begin{align*}
  c_1 &= (\alpha-2)\big((u_K^k)^{\alpha/2}-(u_L^k)^{\alpha/2}\big)(u_K^k)^{\alpha/2-2}
  + \alpha (u_K^k)^{\alpha-2}, \\
  c_2 &= -2\alpha (u_K^k)^{\alpha/2-1}(u_L^k)^{\alpha/2-1}, \\
  c_3 &= -(\alpha-2)\big((u_K^k)^{\alpha/2}-(u_L^k)^{\alpha/2}\big)(u_L^k)^{\alpha/2-2}
  + \alpha (u_L^k)^{\alpha-2}.
\end{align*}
We show that the quadratic polynomial in the variables $a_K$ and $a_L$
is nonnegative for all $u_K^k$ and $u_L^k$. This is the case if and only if
$c_1\ge 0$ and $4c_1c_3-c_2^2\ge 0$. The former condition is equivalent to
$$
  2(\alpha-1)(u_K^k)^{\alpha-2} \ge (\alpha-2)(u_K^k)^{\alpha/2-2}(u_L^k)^{\alpha/2},
$$
which is true for $1\le\alpha\le 2$. After an elementary computation, 
the latter condition becomes
$$
  4c_1c_3-c_2^2
  = 8(\alpha-1)(2-\alpha)(u_K^k)^{\alpha/2-2}(u_L^k)^{\alpha/2-2}
  \big((u_K^k)^{\alpha/2} - (u_L^k)^{\alpha/2}\big)^2 \ge 0
$$
for $1\le\alpha\le 2$. This proves the concavity of $\tau\mapsto G_\alpha(\tau)$.

A Taylor expansion and $G_\alpha(0)=0$ leads to
\begin{align*}
  G_\alpha(\tau) &\le G_\alpha(0) + \tau\frac{\pa G_\alpha}{\pa\tau}(0) \\
  &= \frac{\alpha\tau}{4}\sum_{\substack{\sigma\in\E_{\rm int},\\ \sigma=K|L}}
 \tau_\sigma
  \big((u_K^{k+1})^{\alpha/2}-(u_L^{k+1})^{\alpha/2}\big)
  \big((u_K^{k+1})^{\alpha/2-1}a_K-(u_L^{k+1})^{\alpha/2-1}a_L\big) \\
  &= \frac{\alpha\tau}{4}
  \sum_{\substack{\sigma\in\E_{\rm int},\\ \sigma=K|L}}
  \tau_\sigma\big((u_K^{k+1})^{\alpha/2}-(u_L^{k+1})^{\alpha/2}\big)
  (u_K^{k+1})^{\alpha/2-1}a_K \\
  &\phantom{xx}{}
  +\frac{\alpha\tau}{4}
  \sum_{\substack{\sigma\in\E_{\rm int},\\ \sigma=K|L}}
  \tau_\sigma\big((u_L^{k+1})^{\alpha/2}-(u_K^{k+1})^{\alpha/2}\big)
  (u_L^{k+1})^{\alpha/2-1}a_L.
\end{align*}
Replacing $a_K$ and $a_L$ by scheme \eqref{2.fv} and performing a summation
by parts, we infer that
\begin{align}
  G_\alpha(\triangle t) 
  &\le -\frac{\alpha\triangle t}{4}\sum_{K\in\TT}
  \sum_{\substack{\sigma\in\E_{\rm int},\\ \sigma=K|L}}\tau_\sigma
  \sum_{\substack{\widetilde\sigma\in\E_{\rm int},\\ \widetilde\sigma'=K|M}}
  \tau_{\widetilde\sigma}(u_K^{k+1})^{\alpha/2-1} \nonumber \\
  &\phantom{xx}{}\times\big((u_K^{k+1})^\beta-(u_M^{k+1})^\beta\big)
  \big((u_K^{k+1})^{\alpha/2}-(u_L^{k+1})^{\alpha/2}\big). \label{ineq.G}
\end{align}
Note that the expression on the right-hand side is the discrete counterpart of
the integral 
$$
  -\frac{\alpha}{2}\int_\Omega u^{\alpha/2-1}(u^\beta)_{xx}(u^{\alpha/2})_{xx}dx,
$$
appearing in \eqref{f.aux}. The condition $\alpha=2\beta$ implies immediately the 
monotonicity of $k\mapsto F_\alpha^d[u^k]$.

For the proof of the second statement, let $d=1$ and decompose the interval $\Omega$
in $N$ subintervals $K_1,\ldots,K_N$ of length $h>0$. Because of the periodic boundary
conditions, we may set $u_{N+1}^k=u_0^k$ and $u_{-1}^k=u_N^k$, where $u_i^k$ is the
approximation of the mean value of $u(\cdot,t^k)$ on the subinterval $K_i$,
$i=1,\ldots,N$.
We rewrite \eqref{ineq.G} for $\alpha=2\beta$ in one space dimension:
\begin{align*}
  G_{2\beta}(\tau) 
  &\le -\frac{\beta\tau}{2h}\sum_{i=1}^{N}
  \bigg(\sum_{j\in\{i-1,i+1\}}(u_i^{k+1})^{\beta-1}
  \big((u_i^{k+1})^\beta-({u_{j}}^{k+1})^\beta\big)\bigg)^2 \\
  &\le -\frac{\beta\tau}{2h}\min_{i=1,\ldots,N}\big((u_i^{k+1})^{2(\beta-1)}\big)
  \sum_{i=1}^{N}(z_i-z_{i-1})^2,
\end{align*}
where $z_i = (u_i^{k+1})^\beta-(u_{i+1}^{k+1})^\beta$.
The periodic boundary conditions imply that $\sum_{i=1}^N z_i=0$. Hence,
we can employ the discrete Wirtinger inequality in \cite[Theorem 1]{Shi73}
to obtain
\begin{align*}
  G_{2\beta}(\tau) 
  &\le -\frac{2\beta\tau}{h}\sin^2\frac{\pi}{N}
  \min_{i=1,\ldots,N}\big((u_i^k)^{2(\beta-1)}\big)
  \sum_{i=1}^{N}z_i^2 \\
  &= -\frac{4\beta\tau}{h}\sin^2\frac{\pi}{N}
  \min_{i=1,\ldots,N}\big((u_i^k)^{2(\beta-1)}\big)F_\alpha^d[u^{k+1}].
\end{align*}
By the discrete maximum principle, 
$\max_i(u_i^{k+1})^{2(1-\beta)}\le\max_i(u_i^0)^{2(1-\beta)}$
which is equivalent to $\min_i(u_i^{k+1})^{\beta-1}\ge\min_i(u_i^0)^{\beta-1}$. 
Therefore,
$$
  F_\alpha^d[u^{k+1}]-F_\alpha^d[u^k] 
  = G_{2\beta}(\triangle t) \le -\frac{4\beta\triangle t}{h}\sin^2\frac{\pi}{N}
  \min_{i=1,\ldots,N}\big((u_i^0)^{2(\beta-1)}\big)F_\alpha^d[u^{k+1}],
$$
and Gronwall's lemma finishes the proof.
\end{proof}


\section{Numerical experiments}\label{sec.num}

We illustrate the time decay of the solutions to the discretized 
porous-medium ($\beta=2$) and fast-diffusion equation ($\beta=1/2$) in one and two
space dimensions. 

First, let $\beta=2$. We recall that the Barenblatt profile
$$
  u_B(x,t) = (t+t_0)^{-A}\Big(C - \frac{B(\beta-1)}{2\beta}\,
  \frac{|x-x_0|^2}{(t+t_0)^{2B}}\Big)_{+}^{1/(\beta-1)}
$$
is a special solution to the porous-medium equation in the whole space. (Here, $z_+$ denotes the positive part of a function $z_+:=\max\{0,z\}$.)
The constants are given by
$$
 A = \frac{d}{d(\beta-1)+2},\quad 
 B = \frac{1}{d(\beta-1)+2},
$$
and $C$ is typically determined by the initial datum via
$\int_\Omega u(x,t)dx=\int_\Omega u(x,0)dx$.
We choose $C=B(\beta-1)(2 \beta)^{-1}(t_1+t_0)^{-2B}|x_1-x_0|^2$, where $t_1>0$
is the smallest time for which $u(x_1,t_1)=0$.

In the one-dimensional situation, we choose $\Omega=(0,1)$ with homogeneous
Neumann boundary conditions and a uniform grid
$(x_i,t^j)\in[0,1]\times[0,0.2]$ with $1\le i\le 50$ and $0\le j\le 1000$.
The initial datum is given by the Barenblatt profile $u_B(\cdot,0)$ with
$x_0=0.5$, $x_1=1$ and $t_0=0.01$. The constant $C$ is computed by using $t_1=0.1$,
which yields $C\approx 0.091$. For $0\le t\le 0.1$, the analytical solution
corresponds to the Barenblatt profile. 

The time decay of the
zeroth- and first-order entropies are depicted in Figure \ref{fig.1d.beta2}
in semi-logarithmic scale
for various values of $\alpha$. The decay rates are exponential for 
sufficiently large times, even for $\alpha>1$ (compare to Theorem \ref{thm.E.exp2})
and for $\alpha\neq 2\beta$ (see Theorem \ref{thm.F.exp2}), which indicates
that the conditions imposed in these theorems are technical.
For small times, the decay seems to be faster than the decay in the
large-time regime. This fact has been already observed in \cite[Remark 4]{CDGJ06}.
There is a significant change in the decay rate of the first-order 
entropies $F_\alpha^d$ for times around $t_1=0.1$. Indeed, the positive part of
the discrete solution, which approximates the Barenblatt profile $u_B$ for $t<t_1$, 
arrives the boundary and does not approximate $u_B$ anymore. 
The change is more apparent for $\alpha<1$.

\begin{figure}[htbp]
\centering
\includegraphics[width=80mm]{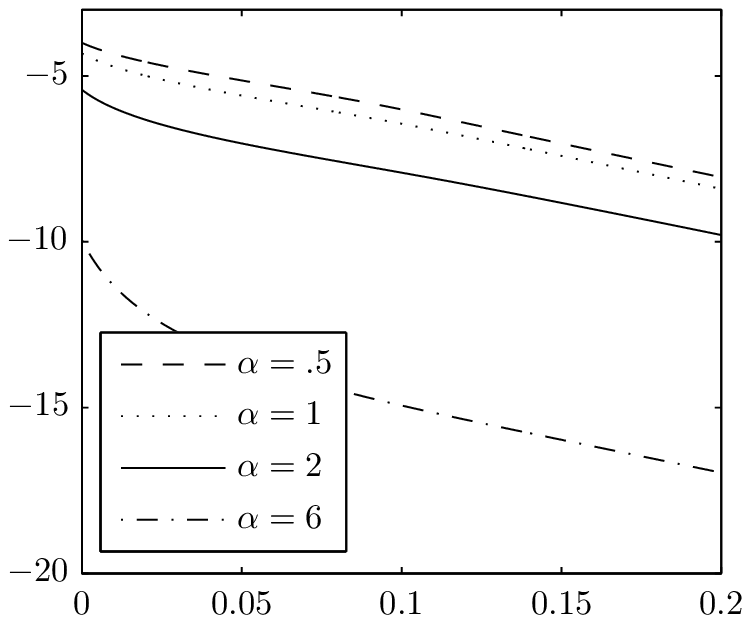}
\includegraphics[width=80mm]{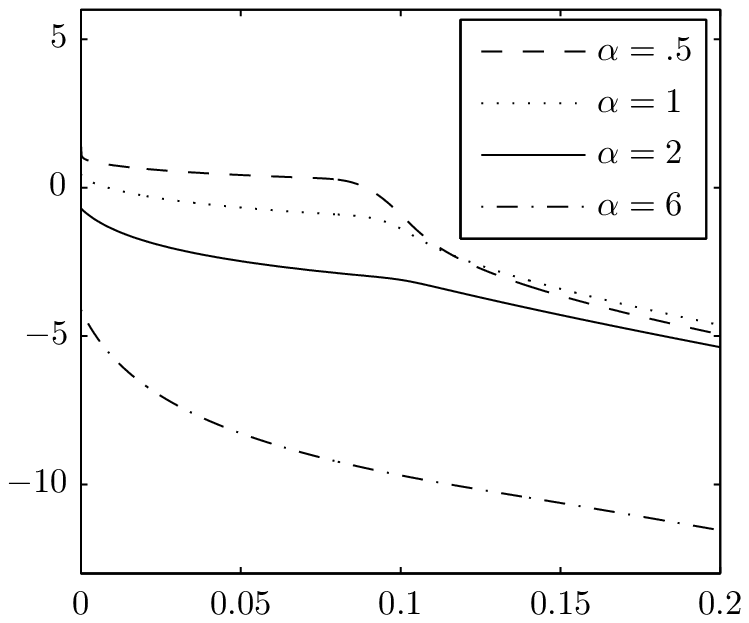}
\caption{The natural logarithm of the entropies $\log(E_\alpha^d[u](t))$ (left) and $\log(F_\alpha^d[u](t))$ (right) versus time for different values of $\alpha$ ($\beta=2$, $d=1$).}
\label{fig.1d.beta2}
\end{figure}

Next, we investigate the two-dimensional situation (still with $\beta=2$).
The domain $\Omega=(0,1)^2$ is divided into $144$ quadratic cells each of which consists
of four control volumes (see Figure \ref{fig.cells}). Again we employ
the Barenblatt profile as the initial datum, choosing $t_0=0.01$, $t_1=0.1$, and
$x_0=(0.5,0.5)$, and impose homogeneous boundary conditions. The time step size
equals $\triangle t=8\cdot 10^{-4}$. 

\begin{figure}[htbp]
  \includegraphics[width=30mm]{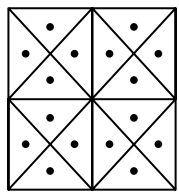}
  \caption{Four of the $144$ cells used for the two-dimensional finite-volume scheme.}
  \label{fig.cells}
\end{figure}

In Figure \ref{fig.2d.beta2}, the time evolution of the (logarithmic) 
zeroth- and first-order entropies are presented. Again, the decay seems to be
exponential for large times, even for values of $\alpha$ not covered by the
theoretical results. At time $t=t_1$, the profile reaches the boundary of the domain. 
In contrast to the one-dimensional situation, since the radially symmetric profile does not reach the boundary everywhere at the
same time, the time decay rate of $F_\alpha^d$ does not change as distinct
as in Figure \ref{fig.1d.beta2}. 

\begin{figure}[htbp]
\includegraphics[width=80mm]{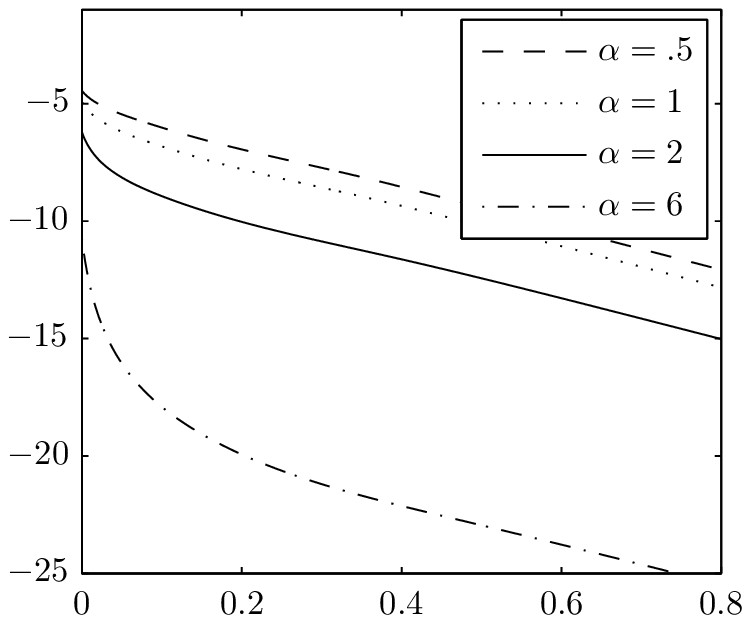}
\includegraphics[width=80mm]{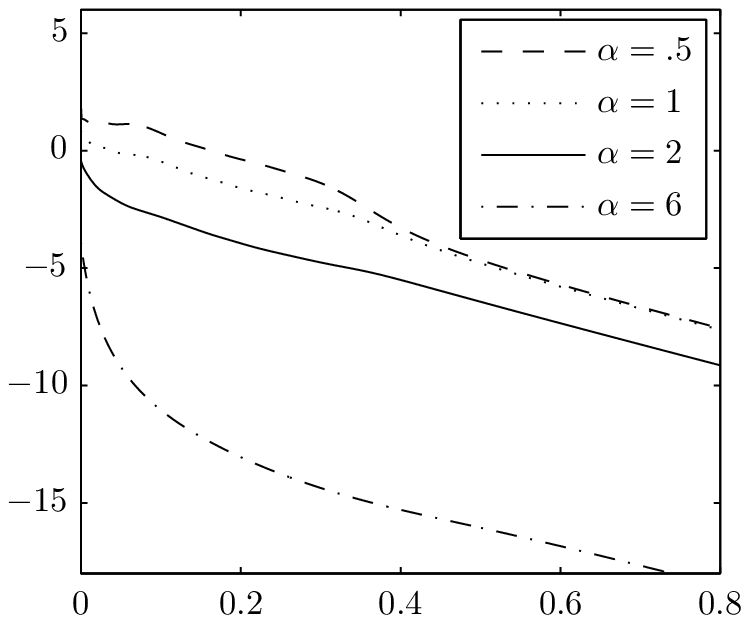}
\caption{The natural logarithm of the entropies $\log(E_\alpha^d[u](t))$ (left) and $\log(F_\alpha^d[u](t))$ (right) versus time for different values of $\alpha$ ($\beta=2$, $d=2$).}
  \label{fig.2d.beta2}
\end{figure}

Let $\beta=1/2$. The one-dimensional interval $\Omega=(0,1)$ is discretized
as before using 51 grid points and the time step size is 
$\triangle t=2\cdot 10^{-4}$. We impose homogeneous Neumann boundary conditions.
As initial datum, we choose the following truncated polynomial
$u_0(x)= C((x_0-x)(x-x_1))_{+}^2$, where $x_0=0.3$, $x_1=0.7$, and
$C=3000$. In the two-dimensional box $\Omega=(0,1)^2$, we employ
the discretization described above and the initial datum
$u_0(x)=C(R^2-|x-x_0|^2)_+^2$, where $R=0.2$, $x_0=(0.5,0.5)$ and again $C=3000$.

In the fast-diffusion case $\beta<1$, we do not expect significant changes in the
decay rate since the initial values propagate with infinite speed.
This expectation is supported by the numerical results presented in Figures
\ref{fig.1d.beta05} and \ref{fig.2d.beta05}. For a large range of values
of $\alpha$, the decay rate is exponential, at least for large times.
Interestingly, the rate seems to approach almost the same value for $\alpha\in\{0.5,1,2\}$
in Figure \ref{fig.2d.beta05}.

\begin{figure}[htbp]
\includegraphics[width=80mm]{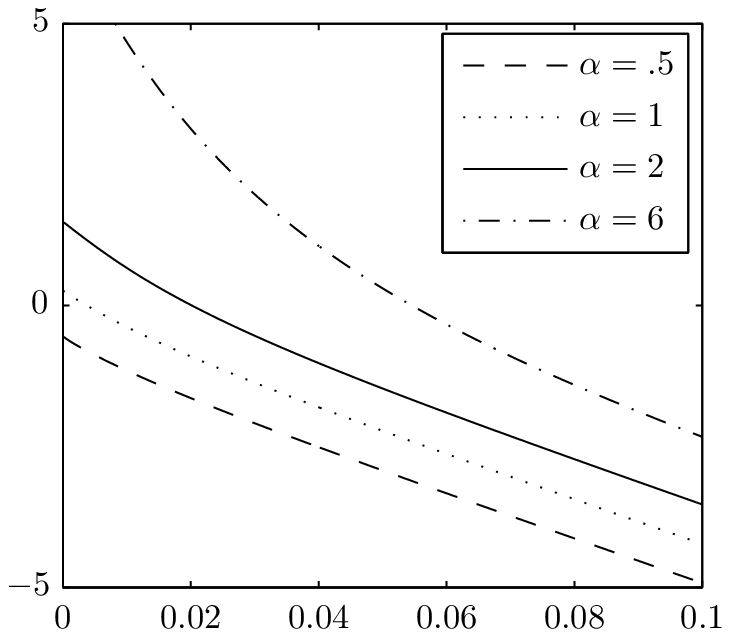}
\includegraphics[width=80mm]{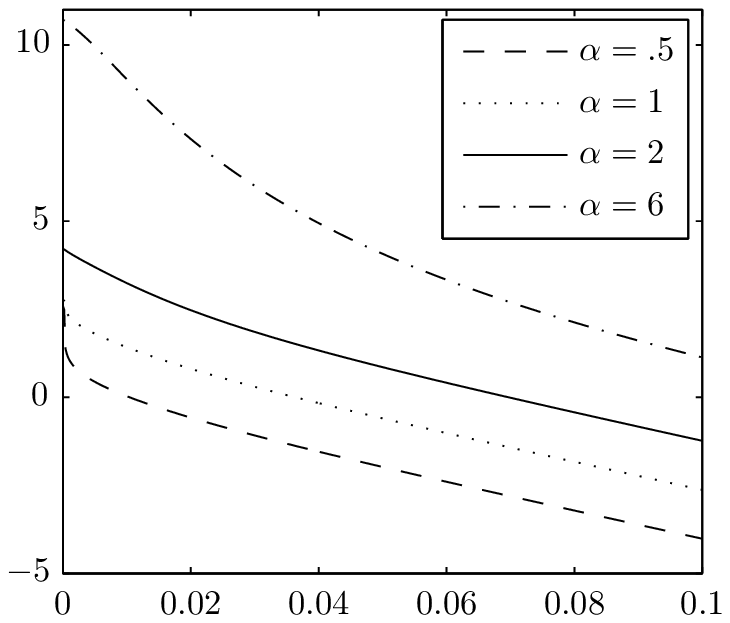}
\caption{The natural logarithm of the entropies $\log(E_\alpha^d[u](t))$ (left) and $\log(F_\alpha^d[u](t))$ (right) versus time for different values of $\alpha$ ($\beta=1/2$, $d=1$).}
\label{fig.1d.beta05}
\end{figure}

\begin{figure}[htbp]
\includegraphics[width=80mm]{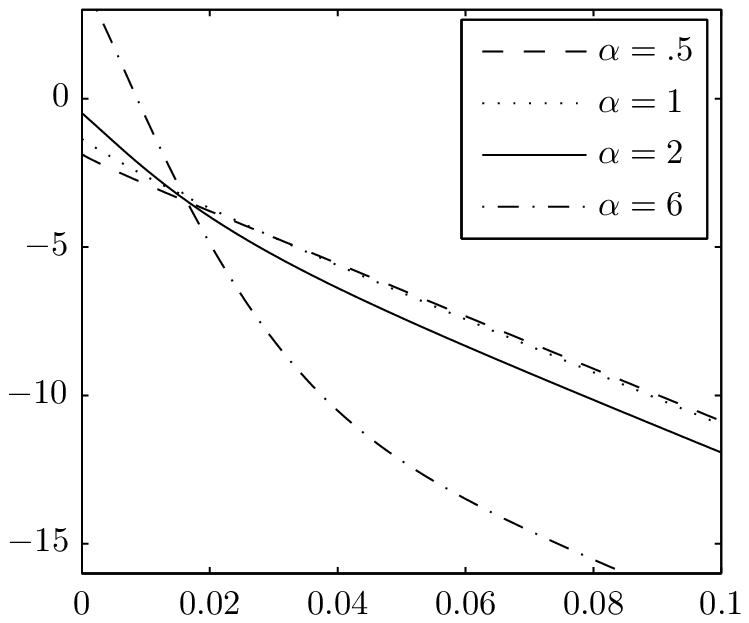}
\includegraphics[width=80mm]{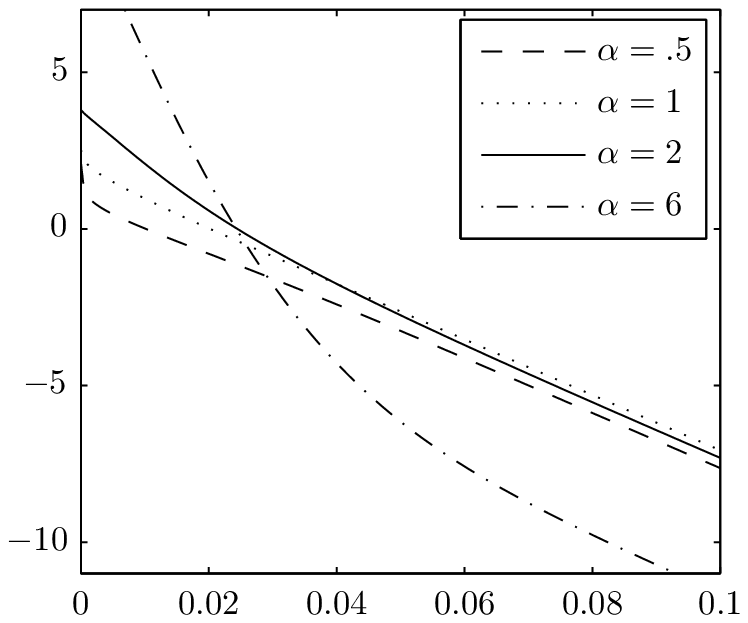}
\caption{The natural logarithm of the entropies $\log(E_\alpha^d[u](t))$ (left) and $\log(F_\alpha^d[u](t))$ (right) versus time for different values of $\alpha$ ($\beta=1/2$, $d=2$).}
\label{fig.2d.beta05}
\end{figure}


\begin{appendix}
\section{Some technical lemmas}\label{sec.app}

\subsection{Discrete Gronwall lemmas}\label{sec.app1}

First, we prove a rather general discrete nonlinear Gronwall lemma.

\begin{lemma}[Discrete nonlinear Gronwall lemma]\label{lem.gron1}
Let $f\in C^1([0,\infty))$ be a positive, nondecreasing, and convex function
such that $1/f$ is locally integrable. Define
$$
  w(x) = \int_1^x \frac{dz}{f(z)}, \quad x\ge 0.
$$
Let $(x_n)$ be a sequence of nonnegative numbers such that
$x_{n+1}-x_n+f(x_{n+1})\le 0$ for $n\in\N_0$. Then
$$
  x_n \le w^{-1}\left(w(x_0) - \frac{n}{1+f'(x_0)}\right), \quad n\in\N.
$$
\end{lemma}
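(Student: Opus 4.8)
The plan is to reduce the statement to a single per-step estimate on the increments of $w$ and then iterate. First I would record two preliminary facts. Since $f>0$, the hypothesis $x_{n+1}-x_n+f(x_{n+1})\le 0$ forces $x_{n+1}<x_n$, so $(x_n)$ is strictly decreasing and in particular $0\le x_n\le x_0$ for all $n$. Moreover, because $1/f>0$, the function $w$ is continuous and strictly increasing on $[0,\infty)$, hence admits a continuous increasing inverse on its range. The target inequality will follow once I show the key estimate
$$
  w(x_n)-w(x_{n+1})\ge \frac{1}{1+f'(x_0)}\quad\text{for all }n\in\N_0,
$$
since summing this telescoping bound gives $w(x_n)\le w(x_0)-n/(1+f'(x_0))$, and applying the increasing map $w^{-1}$ (the argument stays in the range of $w$ because $w(x_n)\ge w(0)$) yields the claim.

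For the key estimate, fix $n$ and abbreviate $a=x_{n+1}$, $b=x_n$, so that $a<b\le x_0$ and, crucially, $b-a\ge f(a)$. I would write the increment as an integral and bound the integrand from below using convexity: since $f'$ is nondecreasing (convexity) and $f'\ge 0$ (monotonicity), for $z\in[a,b]$ one has $f(z)=f(a)+\int_a^z f'(s)\,ds\le f(a)+f'(x_0)(z-a)$. Consequently,
$$
  w(b)-w(a)=\int_a^b\frac{dz}{f(z)}
  \ge \int_a^b\frac{dz}{f(a)+f'(x_0)(z-a)}
  = \frac{1}{f'(x_0)}\log\!\left(1+\frac{f'(x_0)(b-a)}{f(a)}\right),
$$
where the last equality holds when $f'(x_0)>0$ (the degenerate case $f'(x_0)=0$ forces $f$ constant on $[0,x_0]$ and is handled directly, giving $(b-a)/f(a)\ge 1$). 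Using $b-a\ge f(a)$ to estimate the logarithm's argument from below, this is at least $f'(x_0)^{-1}\log(1+f'(x_0))$.

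It then remains to pass from $f'(x_0)^{-1}\log(1+f'(x_0))$ to the cleaner constant $1/(1+f'(x_0))$, which I would do with the elementary inequality $\log(1+t)\ge t/(1+t)$ for $t>0$ (equivalently $\log s\ge 1-1/s$ for $s>0$). This gives exactly $w(x_n)-w(x_{n+1})\ge 1/(1+f'(x_0))$, completing the key estimate and hence the proof. The main obstacle is this per-step estimate: the point is to produce a \emph{uniform} per-step decrement of $w$, and the right device is to replace the varying slope of $f$ on $[x_{n+1},x_n]$ by its maximal value $f'(x_0)$ over the range $[0,x_0]$ swept out by the decreasing sequence, turning the increment into an explicitly integrable affine-denominator form; the final logarithmic inequality is what produces the precise factor $(1+f'(x_0))^{-1}$ advertised in the statement.
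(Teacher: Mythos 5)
Your proof is correct, and its overall skeleton coincides with the paper's: both arguments reduce the claim to a uniform per-step decrement $w(x_n)-w(x_{n+1})\ge (1+f'(x_0))^{-1}$, telescope, and apply the increasing inverse $w^{-1}$. Where you differ is in how that per-step bound is obtained. The paper bounds the integral crudely by a single rectangle, $w(x_{n+1})-w(x_n)\le (x_{n+1}-x_n)/f(x_n)$ (using only monotonicity of $f$), and then manipulates the reciprocal difference quotient, writing $f(x_n)/(x_{n+1}-x_n)$ as $f(x_{n+1})/(x_{n+1}-x_n)$ plus a secant slope, bounding the first term by $-1$ via the recursion and the second by $f'(x_0)$ via convexity. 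You instead majorize $f$ on $[x_{n+1},x_n]$ by the affine function $f(x_{n+1})+f'(x_0)(z-x_{n+1})$ and integrate exactly, which yields the intermediate bound $f'(x_0)^{-1}\log\bigl(1+f'(x_0)\bigr)$; this is in fact slightly \emph{sharper} than the constant in the statement, and you recover the stated constant via $\log(1+t)\ge t/(1+t)$. Your route costs an extra elementary inequality and a separate treatment of the degenerate case $f'(x_0)=0$ (which you handle correctly), but it buys a marginally better per-step decrement and makes the role of convexity (through $f'\le f'(x_0)$ on $[0,x_0]$) more transparent; you also explicitly verify that the argument of $w^{-1}$ lies in the range of $w$, a point the paper passes over in silence. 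Either derivation is acceptable.
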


Notice that the function $w$ is strictly increasing such that its inverse
is well defined. 

\begin{proof}
Since $f$ is nondecreasing and $(x_n)$ is nonincreasing, we obtain
$$
  w(x_{n+1})-w(x_n) = \int_{x_n}^{x_{n+1}}\frac{dz}{f(z)}
  \le \frac{x_{n+1}-x_n}{f(x_n)}.
$$
The sequence $(x_n)$ satisfies $f(x_{n+1})/(x_{n+1}-x_n)\ge -1$. Therefore,
\begin{align*}
  w(x_{n+1})-w(x_n) 
  &\le \Big(
  \frac{f(x_{n+1})}{x_{n+1}-x_n} + \frac{f(x_n)-f(x_{n+1})}{x_{n+1}-x_n}\Big)^{-1} \\
  &\le \Big(-1 - \frac{f(x_n)-f(x_{n+1})}{x_{n}-x_{n+1}}\Big)^{-1}.
\end{align*}
By the convexity of $f$, $f(x_n)-f(x_{n+1})\le f'(x_n)(x_n-x_{n+1})
\le f'(x_0)(x_n-x_{n+1})$, which implies that
$$
  w(x_{n+1})-w(x_n) \le (-1 - f'(x_0))^{-1}.
$$
Summing this inequality from $n=0$ to $N-1$, where $N\in\N$, yields
$$
  w(x_N) \le w(x_0) - \frac{N}{1+f'(x_0)}.
$$
Applying the inverse function of $w$ shows the lemma.
\end{proof}

The choice $f(x)=\tau Kx^\gamma$ for some $\gamma>1$ in Lemma \ref{lem.gron1}
lead to the following result.

\begin{corollary}\label{coro.gron2}
Let $(x_n)$ be a sequence of nonnegative numbers satisfying
$$
  x_{n+1}-x_n + \tau x_{n+1}^\gamma \le 0, \quad n\in\N,
$$
where $K>0$ and $\gamma>1$. Then
$$
  x_n \le \frac{1}{\big(x_0^{1-\gamma} 
  + c\tau n\big)^{1/(\gamma-1)}},
  \quad n\in\N,
$$
where $c=(\gamma-1)/(1+\gamma\tau x_0^{\gamma-1})$.
\end{corollary}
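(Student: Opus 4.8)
The plan is to apply Lemma \ref{lem.gron1} directly with the power nonlinearity $f(x)=\tau x^\gamma$, for which every ingredient of that lemma can be written down in closed form. First I would check the structural hypotheses: $f$ is $C^1$, nondecreasing, and (since $\gamma>1$) convex, while $1/f(z)=\tau^{-1}z^{-\gamma}$ is integrable on any interval bounded away from the origin. The only delicate point is that $f(0)=0$, so $1/f$ fails to be integrable at $0$; since the recurrence forces $(x_n)$ to be nonincreasing (indeed $x_{n+1}-x_n\le-\tau x_{n+1}^\gamma\le 0$), I would dispose of the degenerate situation at once — if some $x_n$ vanishes, the asserted bound holds trivially because its right-hand side is positive — and otherwise work on $(0,\infty)$, where all the computations below are legitimate. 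With this choice of $f$, the hypothesis $x_{n+1}-x_n+\tau x_{n+1}^\gamma\le 0$ is exactly the assumption $x_{n+1}-x_n+f(x_{n+1})\le 0$ required by the lemma.

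Next I would compute the auxiliary function $w$ and its inverse explicitly. Integrating,
$$
  w(x)=\int_1^x\frac{dz}{\tau z^\gamma}=\frac{1-x^{1-\gamma}}{\tau(\gamma-1)},
$$
which is strictly increasing, and solving $y=w(x)$ for $x$ gives
$$
  w^{-1}(y)=\big(1-\tau(\gamma-1)y\big)^{-1/(\gamma-1)}.
$$
Since $f'(x)=\gamma\tau x^{\gamma-1}$, we have $f'(x_0)=\gamma\tau x_0^{\gamma-1}$, which is the uniform slope that the convexity estimate in the proof of Lemma \ref{lem.gron1} exploits (using $x_n\le x_0$ and the monotonicity of $f'$).

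The final step is substitution into the conclusion of Lemma \ref{lem.gron1} and simplification. The lemma gives $x_n\le w^{-1}\big(w(x_0)-n/(1+f'(x_0))\big)$, so I would evaluate the argument of $w^{-1}$. A short computation shows that
$$
  1-\tau(\gamma-1)\Big(w(x_0)-\frac{n}{1+f'(x_0)}\Big)
  = x_0^{1-\gamma}+\frac{\tau(\gamma-1)}{1+\gamma\tau x_0^{\gamma-1}}\,n,
$$
and the coefficient of $n$ is precisely $c\tau$ with $c=(\gamma-1)/(1+\gamma\tau x_0^{\gamma-1})$. Raising to the power $-1/(\gamma-1)$ then yields the stated estimate $x_n\le(x_0^{1-\gamma}+c\tau n)^{-1/(\gamma-1)}$. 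I expect no genuine obstacle here beyond careful bookkeeping; the one point deserving attention is the degeneracy $f(0)=0$ noted above, which is why I would separate off the case of a vanishing term rather than appeal to local integrability of $1/f$ on all of $[0,\infty)$.
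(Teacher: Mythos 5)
Your proposal is correct and is exactly the paper's argument: the paper proves the corollary by the one-line remark that it follows from Lemma \ref{lem.gron1} with $f(x)=\tau x^\gamma$ (the stray constant $K$ in the statement is vestigial), and your explicit computation of $w$, $w^{-1}$, and $f'(x_0)$ fills in precisely the bookkeeping the paper omits. Your extra care about the degenerate case $x_n=0$, where $1/f$ fails to be integrable at the origin, is a sensible refinement the paper glosses over.
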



\subsection{Some inequalities}\label{sec.app2}

We show some inequalities in two variables.

\begin{lemma}\label{lem.ineq1}
Let $\alpha$, $\beta>0$. Then, for all $x$, $y\ge 0$,
\begin{equation}\label{ineq1}
  (y^\alpha-x^\alpha)(y^\beta-x^\beta) \ge \frac{4\alpha\beta}{(\alpha+\beta)^2}
  (y^{(\alpha+\beta)/2}-x^{(\alpha+\beta)/2})^2.
\end{equation}
\end{lemma}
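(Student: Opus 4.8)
The plan is to reduce this two-variable inequality to a single application of the Cauchy--Schwarz inequality after representing each power difference as an integral. First I would observe that both sides are invariant under the exchange $x\leftrightarrow y$: the left-hand side is a product of two factors that change sign simultaneously, while the right-hand side is a square. Hence, without loss of generality, I may assume $y\ge x\ge 0$; the case $y=x$ is trivial, so I take $y>x\ge 0$.

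The key tool is that for any exponent $\gamma>0$,
$$
  y^\gamma - x^\gamma = \gamma\int_x^y s^{\gamma-1}\,ds,
$$
and since $\alpha,\beta>0$ the relevant exponents $\alpha-1$, $\beta-1$, and $(\alpha+\beta)/2-1$ all exceed $-1$, so these integrals converge even when $x=0$. Applying this identity with $\gamma=\alpha$, $\gamma=\beta$, and $\gamma=(\alpha+\beta)/2$ rewrites both sides of \eqref{ineq1} entirely in terms of integrals, and in particular
$$
  \big(y^{(\alpha+\beta)/2}-x^{(\alpha+\beta)/2}\big)^2
  = \frac{(\alpha+\beta)^2}{4}\left(\int_x^y s^{(\alpha+\beta)/2-1}\,ds\right)^2.
$$

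The crucial observation is the exponent splitting $s^{(\alpha+\beta)/2-1}=s^{(\alpha-1)/2}\,s^{(\beta-1)/2}$, which by Cauchy--Schwarz yields
$$
  \left(\int_x^y s^{(\alpha+\beta)/2-1}\,ds\right)^2
  \le \left(\int_x^y s^{\alpha-1}\,ds\right)\left(\int_x^y s^{\beta-1}\,ds\right).
$$
Multiplying by $\alpha\beta$ and recognizing the two integrals on the right as $\alpha^{-1}(y^\alpha-x^\alpha)$ and $\beta^{-1}(y^\beta-x^\beta)$ gives exactly $(y^\alpha-x^\alpha)(y^\beta-x^\beta)$, while the left becomes $\tfrac{4\alpha\beta}{(\alpha+\beta)^2}\big(y^{(\alpha+\beta)/2}-x^{(\alpha+\beta)/2}\big)^2$, which is the claim.

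There is no genuine obstacle here: the entire argument collapses to one use of Cauchy--Schwarz, and this approach delivers the sharp constant $4\alpha\beta/(\alpha+\beta)^2$ automatically rather than requiring it to be guessed. The only point demanding a little care is the book-keeping of the three multiplicative constants $\alpha$, $\beta$, and $(\alpha+\beta)/2$ produced by the integral representations, together with the (harmless) verification that the integral identity remains valid at the endpoint $x=0$.
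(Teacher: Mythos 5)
Your proof is correct, and it takes a genuinely different route from the one in the paper. You write each power difference as $y^\gamma-x^\gamma=\gamma\int_x^y s^{\gamma-1}\,ds$, split $s^{(\alpha+\beta)/2-1}=s^{(\alpha-1)/2}s^{(\beta-1)/2}$, and apply the Cauchy--Schwarz inequality on $L^2([x,y])$; the constant $4\alpha\beta/(\alpha+\beta)^2$ then falls out of the three normalizing factors, and the endpoint case $x=0$ is harmless because $\alpha-1,\beta-1>-1$ keeps the integrands square-integrable. The paper instead homogenizes by setting $z=(x/y)^\beta$ and proves that the resulting one-variable function $f(z)$ is nonnegative by checking $f(1)=f'(1)=0$ and establishing convexity of $f$ through a further analysis of an auxiliary function $g$ appearing in $f''$. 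Your argument is considerably shorter and conceptually cleaner: it produces the sharp constant automatically rather than verifying it a posteriori, and it generalizes readily (e.g.\ via H\"older to other exponent splittings). The paper's calculus approach is more computational but is stylistically of a piece with the other elementary convexity arguments used throughout (compare the treatment of $g(c)$ in Lemma \ref{lem.gpw} and of $G(y)$ in Lemma \ref{lem.beckner2}), and it requires no integral representation. Both proofs are complete; yours is arguably the preferable one for this particular lemma.
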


\begin{proof}
If $y=0$, inequality \eqref{ineq1} holds. Let $y\neq 0$ and set $z=(x/y)^\beta$.
Then the inequality is proved if for all $z\ge 0$,
$$
  f(z) = (1-z^{\alpha/\beta})(1-z) - \frac{4\alpha\beta}{(\alpha+\beta)^2}
  (1-z^{(\alpha+\beta)/2\beta})^2 \ge 0.
$$
We differentiate $f$ twice:
\begin{align*}
  f'(z) &= -1 - \frac{\alpha}{\beta}z^{\alpha/\beta-1}
  + \frac{(\alpha-\beta)^2}{\beta(\alpha+\beta)}z^{\alpha/\beta}
  + \frac{4\alpha}{\alpha+\beta}z^{(\alpha+\beta)/2\beta}, \\
  f''(z) &= \frac{\alpha(\alpha-\beta)}{\beta}z^{\alpha/2\beta-3/2}
  \Big(-\frac{1}{\beta}z^{\alpha/2\beta-1/2} 
  + \frac{\alpha-\beta}{\beta(\alpha+\beta)}z^{\alpha/2\beta+1/2} 
  + \frac{2}{\alpha+\beta}\Big).
\end{align*}
Then $f(1)=0$ and $f'(1)=0$. Thus, if we show that $f$ is convex, the
assertion follows. In order to prove the convexity of $f$, we define
$$
  g(z) = -\frac{1}{\beta}z^{\alpha/2\beta-1/2} 
  + \frac{\alpha-\beta}{\beta(\alpha+\beta)}z^{\alpha/2\beta+1/2} 
  + \frac{2}{\alpha+\beta}.
$$
Then $g(1)=0$ and it holds
$$
  g'(z) = \frac{\alpha-\beta}{2\beta^2}z^{\alpha/2\beta-3/2}(-1+z),
$$
and therefore, $g'(1)=0$. Now, if $\alpha>\beta$, $g(0)=2/(\alpha+\beta)>0$,
and $g$ is decreasing in $[0,1]$ and increasing in $[1,\infty)$. Thus,
$g(z)\ge 0$ for all $z\ge 0$. If $\alpha<\beta$ then $g(0+)=-\infty$,
and $g$ is increasing in $[0,1]$ and decreasing in $[1,\infty)$. Hence,
$g(z)\le 0$ for $z\ge 0$. Independently of the sign of $\alpha-\beta$, we obtain
$$
  f''(z) = \frac{\alpha(\alpha-\beta)}{\beta}z^{\alpha/2\beta-3/2}g(z) \ge 0
$$
for all $z\ge 0$, which shows the convexity of $f$.
\end{proof}

\begin{corollary}\label{coro.ineq1}
Let $\alpha$, $\beta>0$. Then, for all $x$, $y\ge 0$,
$$
  (y^\beta-x^\beta)(y^\alpha-x^\alpha) \ge \frac{4\alpha\beta}{(\alpha+1)^2}
  \min\{x^{\beta-1},y^{\beta-1}\}(y^{(\alpha+1)/2}-x^{(\alpha+1)/2})^2.
$$
\end{corollary}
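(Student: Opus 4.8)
The plan is to reduce everything to Lemma~\ref{lem.ineq1} applied with the second exponent equal to $1$, combined with an elementary comparison between $y^\beta-x^\beta$ and $y-x$. First I would observe that the claimed inequality is symmetric under the exchange $x\leftrightarrow y$: the product $(y^\beta-x^\beta)(y^\alpha-x^\alpha)$, the factor $\min\{x^{\beta-1},y^{\beta-1}\}$, and the square $(y^{(\alpha+1)/2}-x^{(\alpha+1)/2})^2$ are all invariant. Hence I may assume without loss of generality that $y\ge x\ge 0$, in which case both $y^\alpha-x^\alpha$ and $y^\beta-x^\beta$ are nonnegative.

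The key auxiliary estimate is the elementary bound
$$
  y^\beta-x^\beta \ge \beta\min\{x^{\beta-1},y^{\beta-1}\}(y-x),
  \quad 0\le x\le y.
$$
I would prove this from the representation $y^\beta-x^\beta=\beta\int_x^y t^{\beta-1}dt$ together with the monotonicity of $t\mapsto t^{\beta-1}$: if $\beta\ge 1$ the integrand is nondecreasing on $[x,y]$, so it is bounded below by $x^{\beta-1}=\min\{x^{\beta-1},y^{\beta-1}\}$; if $0<\beta<1$ it is nonincreasing, so it is bounded below by $y^{\beta-1}=\min\{x^{\beta-1},y^{\beta-1}\}$. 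In either case, integrating the constant lower bound over $[x,y]$ yields the estimate, and the case $x=0$ is covered by the same computation since the integral converges for $\beta>0$.

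With this in hand, I would apply Lemma~\ref{lem.ineq1} with the pair of exponents $(\alpha,1)$ in place of $(\alpha,\beta)$, obtaining
$$
  (y^\alpha-x^\alpha)(y-x) \ge \frac{4\alpha}{(\alpha+1)^2}
  \big(y^{(\alpha+1)/2}-x^{(\alpha+1)/2}\big)^2.
$$
Multiplying this by the nonnegative factor $\beta\min\{x^{\beta-1},y^{\beta-1}\}$ gives a lower bound for $\beta\min\{x^{\beta-1},y^{\beta-1}\}(y^\alpha-x^\alpha)(y-x)$. On the other hand, the auxiliary estimate combined with $y^\alpha-x^\alpha\ge 0$ shows that $(y^\alpha-x^\alpha)(y^\beta-x^\beta)\ge \beta\min\{x^{\beta-1},y^{\beta-1}\}(y^\alpha-x^\alpha)(y-x)$. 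Chaining the two inequalities yields
$$
  (y^\alpha-x^\alpha)(y^\beta-x^\beta)
  \ge \frac{4\alpha\beta}{(\alpha+1)^2}\min\{x^{\beta-1},y^{\beta-1}\}
  \big(y^{(\alpha+1)/2}-x^{(\alpha+1)/2}\big)^2,
$$
which is precisely the asserted inequality.

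There is no serious obstacle here, as the statement is a genuine corollary of Lemma~\ref{lem.ineq1}. The only point requiring a little care is the auxiliary estimate at the boundary $x=0$ when $0<\beta<1$, where $x^{\beta-1}$ is formally $+\infty$ so that the minimum equals $y^{\beta-1}$; I would then verify directly that $y^\beta\ge\beta\,y^\beta$ holds there because $\beta\le 1$. Everything else is a routine chaining of nonnegative factors, so I expect the write-up to be short.
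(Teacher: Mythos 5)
Your proof is correct and follows essentially the same route as the paper: both reduce to Lemma~\ref{lem.ineq1} applied with the second exponent equal to $1$ and then use the integral bound $y^\beta-x^\beta=\beta\int_x^y t^{\beta-1}\,dt\ge\beta\min\{x^{\beta-1},y^{\beta-1}\}(y-x)$. The only difference is cosmetic (the paper writes the factor as $(y^\beta-x^\beta)/(y-x)$ before bounding it), and your extra care at $x=0$ for $0<\beta<1$ is a harmless refinement the paper leaves implicit.
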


\begin{proof}
We assume without restriction that $y>x$. Then
we apply Lemma \ref{lem.ineq1} to $\beta=1$:
$$
  (y^\beta-x^\beta)(y^\alpha-x^\alpha) 
  = \frac{y^\beta-x^\beta}{y-x}(y^\alpha-x^\alpha)(y-x)
  \ge \frac{4\alpha}{(\alpha+1)^2}\frac{y^\beta-x^\beta}{y-x}
  (y^{(\alpha+1)/2}-x^{(\alpha+1)/2})^2.
$$
Since 
$$
  y^\beta - x^\beta = \beta\int_x^y t^{\beta-1}dt 
  \ge \beta\min\{x^{\beta-1},y^{\beta-1}\}(y-x),
$$
the conclusion follows.
\end{proof}

\end{appendix}


\end{document}